\definecolor{darkred}{rgb}{0.7,0,0} 
\newcommand{\defn}[1]{{\color{darkred}\emph{#1}}} 
\DeclareSymbolFontAlphabet{\mathbb}{AMSb}
\DeclareSymbolFontAlphabet{\mathbbl}{bbold}
\newcommand{\MRPP}{\ensuremath\mathrm{ShRPP}}
\numberwithin{equation}{section}
\theoremstyle{definition}
\newtheorem* {theorem*}{Theorem}
\newtheorem* {conjecture*}{Conjecture}
\newtheorem{theorem}{Theorem}[section]
\newtheorem{problem}[theorem]{Problem}
\theoremstyle{definition}
\newtheorem* {example*}{Example}
\newtheorem{lemma}[theorem]{Lemma}
\theoremstyle{definition}
\newtheorem{definition}[theorem]{Definition}
\theoremstyle{definition}
\newtheorem{conjecture}[theorem]{Conjecture}
\newtheorem{proposition}[theorem]{Proposition}
\newtheorem{corollary}[theorem]{Corollary}
\newtheorem*{remark*}{Remark}
\theoremstyle{definition}
\newtheorem{remark}[theorem]{Remark}
\theoremstyle{definition}
\newtheorem {example}[theorem]{Example}
\theoremstyle{definition}
\theoremstyle{definition}
\theoremstyle{definition}
\def\({\left(}
\def\){\right)}
\newcommand{\QQ}{\mathbb{Q}}
\newcommand{\cI}{\mathcal{I}}
\def\Hom{\mathrm{Hom}}
\def\End{\mathrm{End}}
\def\ZZ{\mathbb{Z}}
\def\spanning{\textnormal{-span}}
\def\fk{\mathfrak}
\def\barr{\begin{array}}
\def\earr{\end{array}}
\def\ba{\begin{aligned}}
\def\ea{\end{aligned}}
\def\be{\begin{equation}}
\def\ee{\end{equation}}
\def\quand{\quad\text{and}\quad}
\def\M{\mathcal{M}}
\def\ds{\displaystyle}
\def\id{\mathrm{id}}
\def\ben{\begin{enumerate}}
\def\een{\end{enumerate}}
\def\Des{\mathrm{Des}}
\def\c{{\tt c}}
\def\t{{\tt t}}
\def\r{{\tt r}}
\def\arcstart{\ \xy<0cm,-.15cm>\xymatrix@R=.1cm@C=.3cm }
\newcommand{\arcstartc}[1]{\ \xy<0cm,-.15cm>\xymatrix@R=.1cm@C=#1cm}
\def\M{\mathbb{M}}
\def\st{\mathfrak{st}}
\def\antipode{{\tt S}}
\def\htimes{\mathbin{\hat\otimes}}
\def\zetaq{\zeta_{\textsf{Q}}}
\def\bK{\bar{K}}
 \newcommand{\Sym}{\textsf{Sym}}
\newcommand{\NSym}{\textsf{NSym}}
\newcommand{\MPR}{\textsf{MR}}
\newcommand{\QSym}{\textsf{QSym}}
\newcommand{\Peak}{\textsf{Peak}}
\newcommand{\MPeakQ}{\mathfrak{M}\Peak_Q}
\newcommand{\MPeakP}{\mathfrak{M}\Peak_P}
\newcommand{\MSymP}{\mathfrak{M}\Sym_P}
\newcommand{\mSymQ}{\mathfrak{m}\Sym_Q}
\newcommand{\MSymQ}{\mathfrak{M}\Sym_Q}
\newcommand{\mSymP}{\mathfrak{m}\Sym_P}
\newcommand{\MSymPQ}{\mathit{g\Gamma}}
\newcommand{\mSymPQ}{\mathit{G\Gamma}}
\newcommand{\QSymQ}{\Pi\Sym}
\newcommand{\QSymP}{\bar{\Pi}\Sym}
\newcommand{\mQSymQ}{\mathfrak{m}\QSymQ}
\newcommand{\mQSymP}{\mathfrak{m}\QSymP}
\newcommand{\MSym}{\mathfrak{M}\textsf{Sym}}
\newcommand{\MNSym}{\mathfrak{M}\textsf{NSym}}
\newcommand{\MMPR}{\mathfrak{M}\textsf{MR}}
\newcommand{\mMPR}{\mathfrak{m}\textsf{MR}}
\def\columns{\mathsf{cols}}
\def\PWords{\textsf{PackedWords}}
\def\WQSym{\textsf{WQSym}}
\def\mWQSym{\mathfrak{m}\textsf{WQSym}}
\def\SetComp{\textsf{SetComp}}
\def\mWQSym{\m\textsf{WQSym}}
\def\MWQSym{\textsf{WQSym}}
\def\PSet{\textsf{Set}(\PP)}
\def\MSet{\textsf{Set}(\MM)}
\def\ttheta{\Theta^{(\beta)}}
\def\tR{R^{(\beta)}}
\def\oK{\bar{K}}
\newcommand{\mSym}{\mathfrak{m}\textsf{Sym}}
\newcommand{\mQSym}{\mathfrak{m}\textsf{QSym}}
\def\Sm{S^{\fk m}}
\def\SM{S^{\fk M}}
\def\barSM{{\bar S}^{\fk M}}
\def\XX{\mathbb{X}}
\def\m{\mathfrak{m}}
\def\M{\mathfrak{M}}
\def\wtmrpp{\mathrm{wt}_{\mathrm{RPP}}}
\def\SD{\textsf{SD}}
\def\equivM{\mathbin{<_\M}}
\def\cJM{\cI_\M}
\def\PeakSet{\mathrm{Peak}}
\def\opeak{\pi\hspace{-0.1mm}q}
\def\tpeak{\pi\hspace{-0.1mm}p}
\def\tpeakR{\tpeak^{(\beta)}}
\def\opeakR{\opeak^{(\beta)}}
\def\GQ{G\hspace{-0.2mm}Q}
\def\GP{G\hspace{-0.2mm}P}
\def\JQ{J\hspace{-0.2mm}Q}
\def\JP{J\hspace{-0.2mm}P}
\def\bGQ{\GQ^{(\beta)}}
\def\bGP{\GP^{(\beta)}}
\def\bJQ{\JQ^{(\beta)}}
\def\bJP{\JP^{(\beta)}}
\def\bXi{\Xi^{(\beta)}}
\def\gq{g\hspace{-0.1mm}q}
\def\gp{g\hspace{-0.1mm}p}
\def\bgq{\gq^{(\beta)}}
\def\bgp{\gp^{(\beta)}}
\def\jq{j\hspace{-0.1mm}q}
\def\jp{j\hspace{-0.1mm}p}
\def\bjq{\jq^{(\beta)}}
\def\bjp{\jp^{(\beta)}}
\def\JQ{J\hspace{-0.2mm}Q}
\def\JP{J\hspace{-0.2mm}P}
\def\ss{/\hspace{-1mm}/}
\def\MM{\mathbb{M}}
\def\PP{\ZZ_{>0}}
\def\NN{\ZZ_{\geq 0}}
\newcommand{\cC}{\alpha_{\textsf{des}}}
\newcommand{\cD}{\alpha_{\textsf{peak}}}
\def\RC{\mathsf{RC}}
\def\ShSetTab{\mathsf{ShSVT}}
\def\ShMSetTab{\mathsf{ShMVT}}
\def\ShBTQ{\mathsf{ShBT}_Q}
\def\ShBTP{\mathsf{ShBT}_P}
\def\MM{\frac{1}{2}\PP}
\def\cons{o}
\def\mG{\mathbf{G}^+}
\def\mGP{\mathbf{GP}^+}
\def\mGQ{\mathbf{GQ}^+}
\def\mJP{\widehat{\mathbf{JP}}{}^+}
\def\mJQ{\widehat{\mathbf{JQ}}{}^+}
\def\mg{\mathbf{g}^+}
\def\mgp{\mathbf{gp}^+}
\def\mgq{\mathbf{gq}^+}
\begin{document}
\title{Shifted combinatorial Hopf algebras from $K$-theory}
\author{
Eric Marberg \\ Department of Mathematics \\  Hong Kong University of Science and Technology \\ {\tt emarberg@ust.hk}
}

\date{}

\maketitle

\begin{abstract}
In prior joint work with Lewis, we developed a theory of enriched set-valued $P$-partitions to construct a $K$-theoretic generalization of the Hopf algebra of peak quasisymmetric functions. Here, we situate this object in a diagram of six Hopf algebras, providing a shifted version of the diagram of $K$-theoretic combinatorial Hopf algebras studied by Lam and Pylyavskyy. This allows us to describe new $K$-theoretic analogues of the classical peak algebra. We also study the Hopf algebras generated by Ikeda and Naruse's $K$-theoretic Schur $P$- and $Q$-functions, as well as their duals. Along the way, we derive several product, coproduct, and antipode formulas and outline a number of open problems and conjectures.
\end{abstract}


\section{Introduction}

There is a classical diagram of Hopf algebras
\be\label{classical-diagram}
\begin{tikzcd}
\Sym \arrow[d, dash] & \NSym \arrow[l, twoheadrightarrow] \arrow[d, dash]  \arrow[r, hook] & \MPR \arrow[d, dash] \\
\Sym \arrow[r,hook] & \QSym  & \MPR  \arrow[l,twoheadrightarrow]
\end{tikzcd}
\ee
in which  the vertical lines are dualities, the $\hookrightarrow$ arrows are inclusions, and 
the $\twoheadrightarrow$ arrows are their adjoints.
The self-dual object $\Sym$ on the left is the familiar Hopf algebra 
of bounded degree symmetric functions \cite[\S2]{GrinbergReiner}, which has an orthonormal basis given by the  Schur functions $s_\lambda$.
The self-dual object $\MPR$ on the right is the \defn{Malvenuto-Reutenauer Hopf algebra}
of permutations from \cite{AguiarSottile,Malvenuto}.
In the middle, we have the dual pair of \defn{quasisymmetric functions} $\QSym$
and noncommutative symmetric functions $\NSym$, as described in \cite[\S5]{GrinbergReiner}.

In \cite{LamPyl}, Lam and Pylyavskyy  study a ``$K$-theoretic'' generalization of \eqref{classical-diagram} 
given by 
\be\label{k-diagram}
\begin{tikzcd}
\MSym \arrow[d, dash] & \MNSym \arrow[l, twoheadrightarrow] \arrow[d, dash]  \arrow[r, hook] & \MMPR \arrow[d, dash] \\
\mSym \arrow[r,hook] & \mQSym  & \mMPR  \arrow[l,twoheadrightarrow]
\end{tikzcd}.
\ee
The objects here are modules over $\ZZ[\beta]$ rather than $\ZZ$, where $\beta$ is a formal parameter. Setting $\beta=0$ turns \eqref{k-diagram} into \eqref{classical-diagram}.
The objects $\mSym$ and $\mQSym$ consist of the symmetric and quasisymmetric functions over $\ZZ[\beta]$
of unbounded degree. 
Their duals $\MSym$ and $\MNSym$ are isomorphic to $\Sym$ and $\NSym$ but with scalars extended to $\ZZ[\beta]$.
The objects $\mMPR$ and $\MMPR$, finally, are two different generalizations of 
the Malvenuto-Reutenauer Hopf algebra $
\MPR$.

Besides the Schur functions $\{s_\lambda\}$, the Hopf algebras $\mSym$ and $\MSym$
have another pair of dual bases provided by the \defn{stable Grothendieck polynomials}
$G^{(\beta)}_\lambda$ and the \defn{dual stable Grothendieck polynomials} $g^{(\beta)}_\lambda$.
These power series are relevant to $K$-theory since the $G^{(\beta)}_\lambda$ functions are symmetric limits of connective $K$-theory classes of structure sheaves of Schubert varieties; see \cite{Buch2002,FominKirillov94}.

The goal of this article is to investigate two shifted analogues of \eqref{k-diagram}.
To motivate this, let us first discuss the shifted versions of  \eqref{classical-diagram}.
On one hand, we have a diagram
\be\label{shifted-diagram}
\begin{tikzcd}
\Sym_P \arrow[d, dash] & \Peak_P \arrow[l, twoheadrightarrow] \arrow[d, dash]  \arrow[r, hook] & \MPR \arrow[d, dash] \\
\Sym_Q \arrow[r,hook] & \QSymQ  & \MPR  \arrow[l,twoheadrightarrow]
\end{tikzcd}
\ee
in which the vertical lines are again dualities, the $\hookrightarrow$ arrows are inclusions, and 
the $\twoheadrightarrow$ arrows are their adjoints.
Here  $\Sym_P$ and $\Sym_Q$ are the subalgebras of $\Sym$
spanned by the \defn{Schur $P$-functions} $P_\lambda$ and \defn{Schur $Q$-functions} $Q_\lambda$,
which are indexed by all \defn{strict} integer partitions $\lambda$.
These subalgebras are dual Hopf algebras relative to the bilinear form with $[ P_\lambda, Q_\mu] = \delta_{\lambda\mu}$,
which is different from the usual Hall inner product on $\Sym$;
see \cite[Appendix A]{Stembridge1997a}.
In the middle column, $\QSymQ$ is the Hopf algebra of \defn{peak quasisymmetric functions}
$\QSymQ$ from \cite{Stembridge1997a} while  $\Peak_P$ is the \defn{peak algebra} from \cite{Nyman,Schocker}.

There is another version of \eqref{shifted-diagram} 
in which the roles of $\Sym_P$ and $\Sym_Q$ are interchanged:
\be\label{shifted-diagram2}
\begin{tikzcd}
\Sym_Q \arrow[d, dash] & \Peak_Q \arrow[l, twoheadrightarrow] \arrow[d, dash]  \arrow[r, hook] & \MPR \arrow[d, dash] \\
\Sym_P \arrow[r,hook] & \QSymP  & \MPR  \arrow[l,twoheadrightarrow]
\end{tikzcd}.
\ee
Here $\QSymP$ is a slightly larger version of $\QSymQ$ (namely,  the intersection of $\QSymQ_\QQ := \QQ \otimes_\ZZ \QSymQ$ with $\QSym$ \cite[\S3]{Stembridge1997a})
while its dual $\Peak_Q$ is a Hopf subalgebra of $\Peak_P$.
The diagrams \eqref{shifted-diagram} and \eqref{shifted-diagram2} coincide if we work over the scalar ring $\QQ$ rather than $\ZZ$.

Work of Ikeda and Naruse  \cite{IkedaNaruse} identifies
$K$-theoretic versions $\bGP_\lambda$ and $\bGQ_\lambda$ of the classical Schur $P$- and $Q$-functions.
Whereas $P_\lambda$ and $Q_\lambda$
are generating functions for \defn{(semistandard) shifted tableaux},
$\bGP_\lambda$ and $\bGQ_\lambda$  are generating functions for 
\defn{(semistandard) shifted set-valued tableaux} \cite[Thm.~9.1]{IkedaNaruse}.
These symmetric functions represent the structure sheaves of Schubert varieties in the connective $K$-theory ring of the maximal isotropic Grassmannians of orthogonal and symplectic types \cite[Cor.~8.1]{IkedaNaruse}.

Later results of Nakagawa and Naruse \cite{NN2018} construct
two additional families of ``dual'' $K$-theoretic Schur $P$- and $Q$-functions $\bgp_\lambda$ and $\bgq_\lambda$.
As we will explain in Section~\ref{main-sect}, these power series are $\ZZ[\beta]$-bases 
for two Hopf subalgebras $\MSymP$ and $\MSymQ$ of $\MSym$, 
whose respective duals
 $\mSymQ$ and $\mSymP$
  are 
the completions of the algebras $\ZZ[\beta]\spanning\{\bGQ_\lambda\}$ and $\ZZ[\beta]\spanning\{\bGP_\lambda\}$.
These four objects fit into the pair of
diagrams
\be\label{shk-diagram}
\begin{tikzcd}
\MSymP   \arrow[d, dash] &  \MPeakP   \arrow[l, twoheadrightarrow]   \arrow[d, dash]  \arrow[r, hook] & \MMPR \arrow[d, dash] \\
\mSymQ   \arrow[r, hook] &  \mQSymQ   & \mMPR  \arrow[l, twoheadrightarrow]
\end{tikzcd}
\qquad
\begin{tikzcd}
\MSymQ   \arrow[d, dash] &  \MPeakQ   \arrow[l, twoheadrightarrow]   \arrow[d, dash]  \arrow[r, hook] & \MMPR \arrow[d, dash] \\
\mSymP   \arrow[r, hook] &  \mQSymP   & \mMPR  \arrow[l, twoheadrightarrow]
\end{tikzcd}
\ee
which specialize  to \eqref{shifted-diagram} and \eqref{shifted-diagram2} when $\beta=0$,
and which coincide if the scalar ring $\ZZ[\beta]$ is extended to $\QQ[\beta]$.
These diagrams are the primary subject of this article. 
Our main results, building off related work in \cite{ChiuMarberg,LamPyl,LM2022,LM2019}, 
will provide 
distinguished bases for all of the objects shown here, 
explicitly identify the pairings that give the dualities indicated by the vertical lines,
and  
describe the remaining inclusions and their adjoint surjections.

We can summarize our new theorems and outline the rest of this paper as follows.
One way to motivate \eqref{k-diagram}  is through the perspective of \defn{combinatorial Hopf algebras}
as defined in \cite{ABS}. However, some care must be taken to make this rigorous 
as the objects in the bottom row of \eqref{k-diagram} are certain completions of Hopf algebras
rather than actual Hopf algebras.
Section~\ref{prelim-sect} provides a brief survey of the technical background needed to address these issues.

  Section~\ref{k-sect} reviews the construction of the objects and morphisms
in \eqref{k-diagram}. What we present is a very mild generalization of what is studied
by Lam and Pylyavskyy, and involves a parameter $\beta$ that is implicitly set to $\beta=1$ in \cite{LamPyl}.

 The algebras $\mQSymP\supset \mQSymQ$ are spanned by $K$-theoretic generalizations of 
peak quasisymmetric functions studied previously in \cite{LM2019}.
In Section~\ref{multipeak-sect}
we review the definition of these algebras, and prove that $\mQSymQ$ arises of the image of
a canonical morphism of combinatorial Hopf algebras $\mMPR \to \mQSym$; see Theorem~\ref{peak-tl-thm}.

  In Section~\ref{MPeak-sect} we construct the duals of 
$\mQSymP$ and $\mQSymQ$ as   explicit Hopf subalgebras  $\MPeakQ\subset \MNSym$ and $\MPeakP\subset \MNSym$.
This gives two $K$-theoretic generalizations of the classical peak algebra.
Neither appears to have been considered 
in previous literature.
We also derive (co)product formulas for the distinguished bases of 
 $\MPeakQ$ and $\MPeakP$,
 and we identify the adjoint maps $\mMPR \to \mQSymQ$ and $\mMPR \to \mQSymP$ in \eqref{shk-diagram}.
 
  Sections~\ref{shifted-sym-sect} and \ref{pq-sect2}
 discuss the Hopf algebras $\mSymP\supset \mSymQ$
 and their duals $\MSymQ\subset \MSymP$. We 
 prove an identity relating the pairings  $\MSymQ\times \mSymP \to \ZZ[\beta]$
and $\MSymP\times \mSymQ \to \ZZ[\beta]$ to a surjective morphism $\ttheta : \mQSym \to \mQSymQ$;
see Theorem~\ref{ttheta-thm}.
We also identify the adjoint maps $\MPeakP \to \MSymP$
and $\MPeakQ \to \MSymQ$ in \eqref{shk-diagram}; see Theorem~\ref{last-adj-thm}.
These results rely on conjectures from \cite{LM2019,NN2018} proved in \cite{ChiuMarberg,LM2022}.

  Section~\ref{antipode-sect} discusses antipode formulas for the objects in \eqref{shk-diagram}. Building off recent work in \cite{LM2022}, we show that the respective (finite) $\ZZ[\beta]$-linear
spans of all $\bGP$- and $\bGQ$-functions are   sub-bialgebras of $\mSym$ that are not Hopf algebras; see Proposition~\ref{bialgebras-thm}.

  Finally, Section~\ref{open-sect} provides a survey of related open problems and positivity conjectures.
Computer calculations indicate that the coefficients appearing in many different expansions of the distinguished 
bases for the objects in \eqref{shk-diagram} are always positive. In several special cases, it is an open problem to find combinatorial
interpretations of these numbers.

 \subsection*{Acknowledgements}

This work was partially supported by Hong Kong RGC grant GRF 16306120.
We thank Joel Lewis for many helpful discussions
and for his hospitality during several productive visits to George Washington University.

\section{Preliminaries}\label{prelim-sect}

Throughout, we write $\ZZ$ for the set of integers
and let $[n] := \{i \in \ZZ : 0<i \leq n\}$ for $0\leq n \in \ZZ$.
This section presents some basic information about Hopf algebras, their completions,
and quasisymmetric functions.
For more background on each, see \cite{GrinbergReiner,LMW,Marberg2018}.

\subsection{Hopf algebras}

Fix an integral domain $R$ and write $\otimes = \otimes_R$ for the tensor product over this ring.
An \defn{$R$-algebra} is an $R$-module $A$ with $R$-linear
product $\nabla : A\otimes A \to A$
and unit $\iota : R\to A$ 
maps.
Dually, an \defn{$R$-coalgebra} is an $R$-module $A$ with $R$-linear 
coproduct $\Delta : A \to A\otimes A$
and 
counit $\epsilon : A \to R$ maps.
 The (co)product and (co)unit maps must satisfy several associativity axioms;
see \cite[\S1]{GrinbergReiner} for the complete definitions.

An $R$-module $A$ that is both an $R$-algebra and an $R$-coalgebra is an \defn{$R$-bialgebra}
if the coproduct and counit maps are algebra morphisms (equivalently, the product and unit are coalgebra morphisms).

Suppose $A$ is an $R$-bialgebra with structure maps $\nabla$, $\iota$, $\Delta$, and $\epsilon$. 
Let $\End(A)$ denote the set of $R$-linear maps $A \to A$. This set is an $R$-algebra for the 
product 
$
f \ast g := \nabla \circ (f\otimes g) \circ \Delta
$. 
The unit of this \defn{convolution algebra}
is the composition $ \iota\circ \epsilon$ of the unit and counit of $A$.
The bialgebra $A$ is a \defn{Hopf algebra} if  $\id : A \to A$ has a
(necessarily unique) two-sided inverse 
$\antipode : A \to A$ in the convolution algebra $\End(A)$. When it exists, we call $\antipode$ the \defn{antipode} of $A$.

\subsection{Completions}\label{completions-sect}

Many of the objects in the diagrams \eqref{k-diagram} and \eqref{shk-diagram} 
are rings of formal power series of unbounded degree that are ``too large'' to belong to the category of free modules. To formally define algebraic structures on these objects, we need to work in the following setting.

Let $A$ and $B$ be $R$-modules
with an $R$-bilinear form $\langle\cdot,\cdot\rangle : A \times B \to R$.
Assume that $A$ is free and the form
is \defn{nondegenerate} in the sense that
$b\mapsto \langle \cdot,b\rangle$ is a bijection $B\to \Hom_R(A,R)$.
Fix a basis $\{a_i\}_{i \in I}$ for $A$.
For each $i \in I$, there exists a unique element $b_i \in B$ with
$\langle a_i, b_j \rangle = \delta_{ij}$ for all $i,j \in I$,
and we can identify an arbitrary element $b \in B$
with the formal linear combination $\sum_{i \in I} \langle a_i ,b\rangle b_i$.
We refer to $\{b_i\}_{i \in I}$ as a \defn{pseudobasis} for $B$.

We give $R$ with the discrete topology.
Then the \defn{linearly compact topology} \cite[\S I.2]{Dieudonne} on $B$ is the coarsest topology 
in which the maps $\langle a_i, \cdot \rangle : B \to R$ are all continuous.
This topology depends on $\langle\cdot,\cdot\rangle$ but not on the choice of basis for $A$,
and is discrete if $A$ has finite rank.

\begin{definition}
A \defn{linearly compact $R$-module} is an $R$-module $B$ equipped with a nondegenerate
bilinear form $A \times B \to R$ for some free $R$-module $A$, given the linearly compact topology;
in this case  $B$ is the \defn{dual} of $A$.
Morphisms between such modules are continuous $R$-linear maps.
\end{definition}

We will often abbreviate by writing ``LC-'' in place of ``linearly compact.''
Suppose $A$ is a free $R$-module with basis $S$.
We refer to the
$R$-module $B$ of arbitrary $R$-linear combinations of $S$,
 equipped with the nondegenerate bilinear form
$ A \times B \to R$ making $S$ orthonormal,
as the 
\defn{completion} of $A$ with respect to $S$.
This linearly compact $R$-module has $S$ as a pseudobasis.

Let $B$ and $B'$ be linearly compact $R$-modules dual to free $R$-modules $A$ and $A'$.
We reuse $\langle\cdot,\cdot\rangle$ for both of the 
associated nondegenerate forms. 
Every linear map $\phi : A' \to A$ has a unique adjoint $\psi : B\to B'$ such that 
$\langle \phi(a), b\rangle = \langle a,\psi(b)\rangle$ for all $a \in A'$ and $b \in B$.
A linear map $B \to B'$ is \defn{continuous} if and only if it arises as the adjoint 
of some linear map $A' \to A$.

\begin{definition}
The \defn{completed tensor product} of $B$ and $B'$ 
is the $R$-module 
$B \htimes B' := \Hom_R(A\otimes A',R),$
given the LC-topology from the tautological pairing 
$(A\otimes A') \times \Hom_R(A\otimes A',R) \to R$.
\end{definition}

If $\{b_i\}_{i \in I}$ and $\{b'_j \}_{j \in J}$ are pseudobases for $B$ and $B'$,
then we can realize $B\htimes B'$ concretely as the linearly compact $R$-module
with the set of tensors $\{ b_i \otimes b_j'\}_{(i,j) \in I \times J}$ as a pseudobasis.

 \begin{example}\label{lc-ex0}
Let $A = R[x]$ and $B=R\llbracket x \rrbracket$.
Define $\langle\cdot,\cdot\rangle : A \times B \to R$ to be the nondegenerate $R$-bilinear form 
$\left\langle \sum_{n\geq 0} a_n x^n, \sum_{n \geq 0} b_n x^n\right\rangle := \sum_{n\geq 0} a_n b_n.$
Then the set $\{x^n\}_{n\geq 0}$ is a basis for $A$ and a pseudobasis for $B$,
and we have $R\llbracket x \rrbracket \otimes R\llbracket y \rrbracket \neq R\llbracket x \rrbracket\htimes R\llbracket y \rrbracket \cong R\llbracket x,y \rrbracket$
\end{example}

\begin{definition}
Suppose $\nabla : B \htimes B \to B$ and $\iota : B \to R$ are continuous linear maps
which are the adjoints of linear maps $\epsilon : R \to A$ and $\Delta : A \to A \otimes A$.
We say that $(B,\nabla,\iota)$ is an \defn{LC-algebra}
if $(A,\Delta,\epsilon)$ is an $R$-coalgebra.
Similarly, we say that 
$\Delta : B \to B\htimes B$ and $\epsilon : B \to R$ 
make $B$ into an \defn{LC-coalgebra}
if $\Delta$ and $\epsilon$ are the adjoints of the product and unit 
maps of an $R$-algebra on $A$.
 
\end{definition}

We define
\defn{LC-bialgebras} and \defn{LC-Hopf algebras} analogously.
If $B$ is an LC-Hopf algebra then its \defn{antipode} is the adjoint of the antipode of 
the Hopf algebra $A$.
In each case we say that the (co, bi, Hopf) algebra structures on $A$ and $B$ are duals of each other.

More generally, a Hopf algebra $H$ is \defn{dual} to an LC-Hopf algebra $\hat H$ 
via some nondegenerate bilinear form $\langle\cdot,\cdot\rangle : H \times \hat H \to R$ that is continuous in the second coordinate
if one always has $\langle \iota(a), b\rangle = a\cdot \epsilon(b)$, $\langle a, \iota(b)\rangle = \epsilon(a)\cdot b$,
$\langle \nabla(a \otimes b), c\rangle = \langle a\otimes b,\Delta(c)\rangle$,
and 
$\langle a, \nabla(b \otimes c)\rangle = \langle \Delta(a), b\otimes c \rangle$.

 \begin{example} 
Again let $A = R[x]$ and $B=R\llbracket x \rrbracket$. 
Write $\nabla : B \htimes B \to B$ for the usual product map,
define $\iota : R \to B$ to be the natural inclusion, 
and
let $\epsilon : B \to R$ be the ring homomorphism that sets $x=0$.
For each   $\beta \in R$, there is a continuous algebra homomorphism 
$\Delta_\beta : B \to B \htimes B$ with $\Delta_\beta(x) = x\otimes 1 + 1\otimes x+ \beta x \otimes x$,
and $B$ is an LC-bialgebra relative to $\nabla$, $\iota$, $\Delta_\beta$, and $\epsilon$.

There is a unique bialgebra structure on $A$ that is dual to the one on $B$ via the form in Example~\ref{lc-ex0}.
This structure has unit, counit, and coproduct given by appropriate restrictions of $\iota$, $\epsilon$, and $\Delta_0 := \Delta_\beta|_{\beta=0}$,
while its product  has a more complicated formula; see \cite[Ex. 2.4]{LM2019}.
One can show that the dual bialgebras $A$ and $B$ are dual Hopf algebras:
the antipode of $A$ is the linear map with $\antipode_A(x^n)  = (-1)^ n x(x+\beta)^{n-1}$,
and the antipode of $B$ is the continuous linear map with $\antipode_B(x^n)  =\(\frac{-x}{1+\beta x}\)^n$.
Notice that we can restrict $\nabla$ and $\Delta_\beta$ to define a second bialgebra structure on $A$, 
but this will not be a Hopf algebra unless $\beta=0$ as $R$ is an integral domain. 

\end{example}

One can  reformulate the above definitions using commutative diagrams; see 
\cite{Marberg2018}.
Linearly compact (co, bi, Hopf) algebras form a category in which morphisms are continuous linear maps 
commuting with (co)products and (co)units.
The completed tensor product of two linearly compact (co, bi, Hopf) algebras is
itself a linearly compact (co, bi, Hopf) algebra.

\subsection{Quasisymmetric functions}

Let $\beta$, $x_1$, $x_2$, \dots be commuting indeterminates.
From this point on, most of our modules will be defined over the ring $R=\ZZ[\beta]$,
and we write $\otimes $ and $\htimes$ for the corresponding tensor products.
A power series $f \in \ZZ[\beta]\llbracket x_1,x_2,\dots\rrbracket $ is \defn{quasisymmetric} if
for any choice of $a_1,a_2,\dots,a_k \in \PP$,
the coefficients of $x_1^{a_1}x_2^{a_2}\cdots x_{k}^{a_k}$ and $x_{i_1}^{a_1}x_{i_2}^{a_2}\cdots x_{i_k}^{a_k}$
in $f$ are equal for all $i_1<i_2<\dots<i_k$.

\begin{definition}
Let $\mQSym$ be the $\ZZ[\beta]$-module of
quasisymmetric power series in $\ZZ[\beta]\llbracket x_1,x_2,\dots\rrbracket $.
Let $\QSym$ denote the submodule of power series in $\mQSym$ of bounded degree.
\end{definition}

A \defn{composition} $\alpha$ is a finite sequence of positive integers.
If the parts of $\alpha$ have sum $N \geq 0$, then we write $\alpha\vDash N$ or $|\alpha|=N$.
The sequence $\alpha$ is a \defn{partition} if it is weakly decreasing, which we indicate by writing $\alpha\vdash N$ instead of $\alpha \vDash N$.

The set $\QSym$ is a graded ring that is free as a $\ZZ[\beta]$-module.
One basis is provided by the
 \defn{monomial quasisymmetric functions}, which are
defined for each composition $\alpha = (\alpha_1,\alpha_2,\dots,\alpha_k)$
as the sums
$
M_\alpha := \sum_{i_1<i_2<\dots<i_k} x_{i_1}^{\alpha_1} x_{i_2}^{\alpha_2}\cdots x_{i_k}^{\alpha_k} \in \QSym
$ with 
 $M_\emptyset := 1$
when $\alpha =\emptyset$ is the empty composition.
We identify $\mQSym$ with the completion of $\QSym$ relative to this basis.

 Write $\Delta : \QSym \to \QSym \otimes \QSym$ for the $\ZZ[\beta]$-linear map with 
$
\Delta(M_\alpha) = \sum_{\alpha =\alpha'\alpha''} M_{\alpha'} \otimes M_{\alpha''}
$
for each composition $\alpha$,
where $\alpha'\alpha''$ denotes the concatenation of   $\alpha'$ and $\alpha''$.
Let $\epsilon : \QSym \to \ZZ[\beta]$ be the linear map with $M_\emptyset \mapsto 1$ and $M_\alpha\mapsto 0$
for all nonempty compositions $\alpha$.
This coproduct and counit make the algebra
 $\QSym$ into a (graded, connected) Hopf algebra \cite[\S5.1]{GrinbergReiner}.
These maps extend to continuous linear maps $\mQSym \htimes \mQSym \to\mQSym$
and
$\mQSym \to \mQSym \htimes \mQSym$ which make $\mQSym$ into an LC-Hopf algebra.
For a description of its antipode, see Section~\ref{antipode-sect}.

Suppose $H $
is an LC-bialgebra, defined over $\ZZ[\beta]$,
with product $\nabla$, coproduct $\Delta$, unit $\iota$, and counit $\epsilon$.
Let $\XX(H)$ be the set of continuous algebra morphisms $\zeta : H \to \ZZ[\beta]\llbracket t\rrbracket $
with $\zeta(\cdot )|_{t=0}=\epsilon$.

\begin{definition}
If $H$ is an LC-bialgebra (respectively, LC-Hopf algebra)
and $\zeta \in \XX(H)$, then  $(H,\zeta)$
is a \defn{combinatorial LC-bialgebra} (respectively, \defn{combinatorial LC-Hopf algebra}).
Such pairs form a category in which morphisms $(H,\zeta) \to (H',\zeta')$
are morphisms $\phi : H \to H'$ with $\zeta = \zeta'\circ \phi$.
\end{definition}

We view $\mQSym$ as a combinatorial LC-Hopf algebra with respect to
the \defn{canonical zeta function} $\zetaq : \mQSym \to \ZZ[\beta]\llbracket t\rrbracket $ 
given by $\zetaq(f) = f(t,0,0,\dots)$.
On monomial quasisymmetric functions, we have 
$\zetaq(M_\alpha)   = t^{|\alpha|}$ for $\alpha\in\{\emptyset,(1),(2),(3),\dots\}$ and $\zetaq(M_\alpha)=0$ for all other compositions $\alpha$.

For each integer $k\geq 1$ define $\Delta^{(k)} := (1 \otimes \Delta^{(k-1)})\circ \Delta = ( \Delta^{(k-1)}\otimes 1 )\circ \Delta$ where $\Delta^{(1)}:=\Delta$. 
Given
a map $\zeta \in \XX(H)$ and a nonempty composition $\alpha
=(\alpha_1,\alpha_2,\dots,\alpha_k)$,
write $\zeta_\alpha : H \to \ZZ[\beta]$ for the map
sending $h \in H$ to the coefficient of 
$t^{\alpha_1}\otimes t^{\alpha_2}\otimes \cdots \otimes t^{\alpha_k}$
in $\zeta^{\otimes k} \circ \Delta^{(k-1)}(h) \in \ZZ[\beta]\llbracket t\rrbracket ^{\htimes k}$.
When $\alpha=\emptyset$ is the empty composition,
define $\zeta_\emptyset := \epsilon$.

\begin{theorem}[{\cite[Thm.~2.8]{LM2019}}]
\label{abs-thm}
Suppose $(H,\zeta)$ is a combinatorial LC-bialgebra.
Then 
the map with the formula
$\Phi(h) = \sum_\alpha \zeta_\alpha(h) M_\alpha$
for $h \in H$,
where the sum is over all compositions $\alpha$,
is the  unique morphism 
of combinatorial LC-bialgebras
$\Phi : (H,\zeta) \to (\mQSym,\zetaq)$.
\end{theorem}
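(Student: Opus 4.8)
The plan is to verify directly that the formula $\Phi(h)=\sum_\alpha \zeta_\alpha(h)M_\alpha$ defines a morphism with all the required properties, and then to prove uniqueness; this is the linearly compact analogue of the universal property of $\QSym$ of Aguiar, Bergeron, and Sottile \cite{ABS}, so the shape of the argument follows theirs, with extra care paid to continuity and to the fact that $H$ need not be graded. Throughout I write $\zeta=\sum_{n\ge 0}\zeta^{(n)}t^n$ with each $\zeta^{(n)}\colon H\to\ZZ[\beta]$ linear, so that $\zeta^{(0)}=\epsilon$ by the defining condition on $\XX(H)$, and for $\alpha=(\alpha_1,\dots,\alpha_k)$ one has $\zeta_\alpha=(\zeta^{(\alpha_1)}\otimes\cdots\otimes\zeta^{(\alpha_k)})\circ\Delta^{(k-1)}$ while $\zeta_\emptyset=\epsilon$.

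First I would settle well-definedness and continuity. Since $\mQSym$ is the completion of $\QSym$ along the pseudobasis $\{M_\alpha\}$, the series $\sum_\alpha \zeta_\alpha(h)M_\alpha$ is automatically a legitimate element of $\mQSym$ for each $h$. Each $\zeta_\alpha$ is continuous, being a composite of the continuous maps $\zeta$ and $\Delta^{(k-1)}$ with coefficient extraction, so it lies in the free predual of $H$; the assignment $M_\alpha\mapsto\zeta_\alpha$ then defines a linear map from $\QSym$ to that predual whose adjoint is exactly $\Phi$, whence $\Phi$ is continuous. The compatibility $\zetaq\circ\Phi=\zeta$ and the counit and unit conditions are then routine bookkeeping: using $\zetaq(M_{(n)})=t^n$, $\zetaq(M_\emptyset)=1$, and $\zetaq(M_\alpha)=0$ otherwise gives $\zetaq(\Phi(h))=\epsilon(h)+\sum_{n\ge1}\zeta^{(n)}(h)t^n=\zeta(h)$; similarly $\epsilon_{\mQSym}(\Phi(h))=\zeta_\emptyset(h)=\epsilon(h)$, and since $\zeta$ is an algebra map one checks $\zeta_\alpha(1_H)=0$ for every nonempty $\alpha$, so $\Phi(1_H)=M_\emptyset=1$.

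The main work is showing that $\Phi$ is an algebra morphism, i.e. $\Phi(gh)=\Phi(g)\Phi(h)$. Because $\Delta$ is an algebra map, so is each $\Delta^{(k-1)}$, giving $\Delta^{(k-1)}(gh)=\Delta^{(k-1)}(g)\,\Delta^{(k-1)}(h)$; applying $\zeta^{(\gamma_i)}$ in the $i$-th slot and using that $\zeta$ is multiplicative into $\ZZ[\beta]\llbracket t\rrbracket$ expands each factor as $\zeta^{(\gamma_i)}(xy)=\sum_{a_i+b_i=\gamma_i}\zeta^{(a_i)}(x)\zeta^{(b_i)}(y)$ with $a_i,b_i\in\NN$. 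After distributing, each slot with $a_i=0$ collapses through the counit axiom (as $\zeta^{(0)}=\epsilon$), erasing that slot, so the contribution reorganizes as $\sum_{\alpha,\beta}\zeta_\alpha(g)\zeta_\beta(h)$ indexed by the compositions $\alpha,\beta$ obtained after deleting zero parts. The combinatorial heart is to recognize that the number of pairs of sequences $(a_i),(b_i)$ with $a_i+b_i=\gamma_i$ that erase to a fixed pair $(\alpha,\beta)$ equals the multiplicity of $\gamma$ in the quasi-shuffle product governing $M_\alpha M_\beta$ in $\QSym$; matching the two expansions coefficientwise yields the claim. I expect this quasi-shuffle bookkeeping, together with the careful use of the counit to absorb zero parts, to be the main obstacle of the proof.

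For the coproduct, the deconcatenation formula $\Delta(M_\alpha)=\sum_{\alpha=\alpha'\alpha''}M_{\alpha'}\otimes M_{\alpha''}$ reduces the identity $\Delta_{\mQSym}\circ\Phi=(\Phi\htimes\Phi)\circ\Delta_H$ to $\zeta_{\alpha'\alpha''}(h)=\sum_{(h)}\zeta_{\alpha'}(h_{(1)})\zeta_{\alpha''}(h_{(2)})$, which is exactly coassociativity $\Delta^{(k'+k''-1)}=(\Delta^{(k'-1)}\otimes\Delta^{(k''-1)})\circ\Delta$ after applying the characters (the empty-composition cases again handled by the counit). Finally, for uniqueness I would first record that running the construction on $(\mQSym,\zetaq)$ itself gives $(\zetaq)_\alpha(f)=[M_\alpha]f$, the coefficient of $M_\alpha$, since iterated deconcatenation forces each tensor slot to be a single part equal to the corresponding $\alpha_i$. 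If $\Psi$ is any morphism $(H,\zeta)\to(\mQSym,\zetaq)$, then it is a coalgebra map with $\zetaq\circ\Psi=\zeta$, so taking the relevant coefficient of $\zetaq^{\otimes k}\circ\Delta^{(k-1)}_{\mQSym}\circ\Psi$ yields $\zeta_\alpha=(\zetaq)_\alpha\circ\Psi$; hence $[M_\alpha]\Psi(h)=\zeta_\alpha(h)=[M_\alpha]\Phi(h)$ for every $\alpha$, and since $\{M_\alpha\}$ is a pseudobasis we conclude $\Psi=\Phi$.
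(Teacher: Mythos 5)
Your proof is correct and is essentially the intended argument: the paper does not prove Theorem~\ref{abs-thm} itself but imports it from \cite[Thm.~2.8]{LM2019}, which is exactly this Aguiar--Bergeron--Sottile universal-property argument transported to the linearly compact setting --- continuity of $\Phi$ via adjointness to $M_\alpha \mapsto \zeta_\alpha$, multiplicativity via the quasi-shuffle expansion, comultiplicativity via coassociativity against deconcatenation, and uniqueness via $(\zetaq)_\alpha(f) = [M_\alpha]\hs f$. The one step you leave as a sketch, matching $\zeta_\gamma(gh)$ against the coefficient of $M_\gamma$ in $M_\alpha M_\beta$ through pairs $(a_i),(b_i)$ with $a_i+b_i=\gamma_i$ and zero parts absorbed by $\zeta^{(0)}=\epsilon$, is indeed the correct bijection onto overlapping shuffles and causes no convergence issues, since for each fixed $\gamma$ only finitely many pairs $(\alpha,\beta)$ contribute.
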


\section{$K$-theoretic Hopf algebras}\label{k-sect}

We are now prepared to review the construction of  
 the diagram  \eqref{k-diagram} from \cite{LamPyl}.
As noted in the introduction,
we will work with slightly modified versions of the objects discussed in \cite{LamPyl}, involving a formal parameter $\beta$. 
Setting $\beta=1$ recovers Lam and Pylyavskyy's original definitions, but one can
also go in the reverse direction by making appropriate variable substitutions.

\subsection{Small multipermutations}
\label{wqsym-sect}

We start with the object $\mMPR$ in the lower right corner of \eqref{k-diagram},
called the \defn{small multi-Malvenuto-Reutenauer Hopf algebra} in \cite{LamPyl}.

A \defn{word} is a finite sequence  of positive integers.
Let $v=v_1v_2\cdots v_m$ and $w=w_1w_2\cdots w_n$ be words.
When $S = \{s_1  < s_2 < \dots <s_m\} \subset [m+n]$  and $T
= \{t_1 < t_2<\dots <t_n\} = [m+n]\setminus S$,
define  $\shuffle_{S,T}(v,w):=u_1u_2\cdots u_{m+n}$
where $u_{s_i} := v_i$  and $u_{t_i} := w_i$.
The \defn{shuffle product} of $v$ and $w$ is 
$v \shuffle w := \sum \shuffle_{S,T}(v,w)$
where the sum is over all pairs $(S,T)$ of disjoint sets with $S\sqcup T =[m+n]$ where $|S| = m$ and $|T|=n$.
For example,  we have  $21 \shuffle 11 
= 3 \cdot 2111 + 2 \cdot 1211 + 1121.$

For $k \in \NN$, let
$w\uparrow k = (w_1+k)(w_2+k)\cdots (w_n+k)$.
If $w$ has $m$ distinct letters, then its standardization is the word
$\st(w) = \phi(w_1)\phi(w_2)\cdots \phi(w_n)$,
where $\phi$ is the unique order-preserving bijection $\{w_1,w_2,\dots,w_n\}\to[m]$.
A word $w$ is \defn{packed}
if $\st(w) = w$.

\begin{definition}
Let $\PWords$ denote the set of packed words and define 
$\mWQSym$ to be the linearly compact $\ZZ[\beta]$-module with $\PWords$ as a pseudobasis. 
\end{definition}

Define  
$\nabla : \mWQSym \htimes \mWQSym \to \mWQSym
$
and 
$
\Delta : \mWQSym \to \mWQSym \htimes \mWQSym
$
by 
\be\label{packed-prod-eq}
\nabla(v\otimes w) = v\shuffle (w \uparrow \max(v))
\quand
\Delta(w) = \sum_{i=0}^n \st(w_1\cdots w_i) \otimes \st(w_{i+1}\cdots w_n)
\ee
for $v \in \PWords$ and
$w=w_1w_2\cdots w_n \in \PWords$,  extending by linearity and continuity.
Write 
$\iota : \ZZ[\beta] \to \mWQSym$
and
$\epsilon : \mWQSym\to \ZZ[\beta]$
for the linear maps with
$\iota(1) = \emptyset$
and
$\epsilon(w) = \delta_{w\emptyset}$
for $w \in \PWords$.
These maps make
$\mWQSym$ into an LC-Hopf algebra 
 \cite[Prop.~3.11]{Marberg2018},
 called the Hopf algebra of \defn{word quasisymmetric functions}.

A \defn{small multipermutation} is a packed word with no equal adjacent letters.
Let $\Sm_n$ denote the set of such words $w$ with $\max(w) = n$
and define $\Sm_\infty := \bigsqcup_{n \in \NN} \Sm_n$.
Write $<_\m$ for the strongest partial order on $\PWords$ with $w_1\cdots w_i\cdots w_n <_\m w_1 \cdots w_{i}w_{i}\cdots w_n $.
Each lower set under $<_\m$ contains a unique minimal element, which belongs to $\Sm_\infty$.

\begin{definition}
Given  $v \in \Sm_\infty$, let 
$ [v]^{(\beta)}_\m := \sum_{ v \leq_\m w} \beta^{\ell(w)-\ell(v)} w \in \mWQSym$
where the sum is over packed words $w \in \PWords$,
and define $\mMPR$ to be the 
linearly compact $\ZZ[\beta]$-submodule of $\mWQSym$ with the elements $[v]^{(\beta)}_\m$ for $v \in \Sm_\infty$ as a pseudobasis.
\end{definition}


As $\mMPR$ is an LC sub-bialgebra of $\mWQSym$,
which is graded and connected, Takeuchi's formula \cite[Prop.~1.4.22]{GrinbergReiner} implies that its antipode preserves $\mMPR$. This observation lets us
recover the following statement, which is equivalent to \cite[Thms.~4.2 and 7.12]{LamPyl}. 

\begin{theorem}[{\cite{LamPyl}}]
\label{mmpr-thm}
The submodule $\mMPR$ is an LC-Hopf subalgebra of $\mWQSym$.
\end{theorem}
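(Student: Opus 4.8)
The plan is to deduce the Hopf-subalgebra claim from the (easier) sub-bialgebra statement, exactly as the preceding remark suggests. Since $\mWQSym$ is graded by word length and connected, its antipode $\antipode$ is given by the alternating Takeuchi sum $\sum_{k}(-1)^k\nabla^{[k]}\circ(\id-\iota\epsilon)^{\htimes k}\circ\Delta^{[k]}$, built entirely from $\nabla$, $\Delta$, $\iota$, and $\epsilon$; because the augmentation projection $\id-\iota\epsilon$ forces each of the $k$ tensor factors produced by $\Delta^{[k]}$ to have positive length, only the terms with $k\leq\ell(c)$ contribute to the coefficient of a packed word $c$, so the series converges coefficientwise in the LC-topology. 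Consequently $\antipode$ carries every closed sub-bialgebra of $\mWQSym$ into itself. As $\mMPR$ is a closed $\ZZ[\beta]$-submodule, it therefore suffices to check that it is closed under $\nabla$ and $\Delta$; compatibility with the unit and counit is immediate from $[\emptyset]^{(\beta)}_\m=\emptyset$ and $\epsilon([v]^{(\beta)}_\m)=\delta_{v\emptyset}$.

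For the coproduct I would work with the order-preserving \emph{collapse} map $\rho:\PWords\to\Sm_\infty$ sending a packed word to its unique $<_\m$-minimum, obtained by merging each maximal run of equal adjacent letters to a single letter; thus $[v]^{(\beta)}_\m=\sum_{\rho(w)=v}\beta^{\ell(w)-\ell(v)}w$. Expanding $\Delta([v]^{(\beta)}_\m)=\sum_{w\geq_\m v}\beta^{\ell(w)-\ell(v)}\Delta(w)$ via the deconcatenation coproduct of \eqref{packed-prod-eq}, I would sort each cut of a word $w\geq_\m v$ according to whether it lies between two maximal runs or inside one. Boundary cuts reproduce the naive deconcatenations of $v$; an interior cut of a run duplicates the straddling letter and, since splitting a run of length $m$ into parts $i$ and $m-i$ costs exactly one fewer $\beta$ than leaving it intact, the interior cuts assemble (after summing over run lengths) into cross terms carrying a single extra factor of $\beta$. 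Writing $v=v_1v_2\cdots v_n\in\Sm_\infty$, this yields the closed formula
\[
\Delta\bigl([v]^{(\beta)}_\m\bigr)=\sum_{j=0}^{n}[\st(v_1\cdots v_j)]^{(\beta)}_\m\otimes[\st(v_{j+1}\cdots v_n)]^{(\beta)}_\m+\beta\sum_{j=1}^{n}[\st(v_1\cdots v_j)]^{(\beta)}_\m\otimes[\st(v_j v_{j+1}\cdots v_n)]^{(\beta)}_\m,
\]
which manifestly lies in $\mMPR\htimes\mMPR$.

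For the product I would first observe that repeating letters preserves the underlying letter set, so $\max(a)=\max(u)$ for every $a\geq_\m u$; hence $\nabla([u]^{(\beta)}_\m\otimes[v]^{(\beta)}_\m)=\sum_{a\geq_\m u,\,b\geq_\m v}\beta^{(\ell(a)-\ell(u))+(\ell(b)-\ell(v))}\,a\shuffle(b\uparrow\max u)$ with a \emph{uniform} shift, and every word occurring is again packed. I would then group the resulting packed words $c$ by their collapse $\rho(c)\in\Sm_\infty$ and show that the total coefficient of $c$ equals $m_{\rho(c)}\,\beta^{\ell(c)-\ell(\rho(c))}$, where the multiplicity $m_w$ is independent of the $\beta$-inflation; summing over collapses then reorganizes the product as $\sum_w m_w[w]^{(\beta)}_\m\in\mMPR$. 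Together with the coproduct formula this exhibits $\mMPR$ as an LC sub-bialgebra, and the Takeuchi argument above then completes the proof.

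The reduction and the coproduct pose no real difficulty; the \textbf{main obstacle} is the product-closure bookkeeping. The subtlety is that shuffling $a$ with the shifted $b\uparrow\max u$ can interleave repeated letters in ways that create new runs, and even alternating patterns absent from both factors, as already visible in $[1]^{(\beta)}_\m\cdot[1]^{(\beta)}_\m=[12]^{(\beta)}_\m+[21]^{(\beta)}_\m+\beta[121]^{(\beta)}_\m+\beta[212]^{(\beta)}_\m+\cdots$. Proving that the coefficient of each $c$ depends only on $\rho(c)$ and $\ell(c)$ therefore requires a weight-preserving bijection realizing $c$ uniquely as an inflation of a shuffle of the underlying small multipermutations, with the collapses of the straddling runs created by the shuffle tracked carefully; constructing and verifying this correspondence is the technical heart of the argument.
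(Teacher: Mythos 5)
Your reduction is exactly the paper's: the theorem is deduced from the sub-bialgebra property via Takeuchi's formula for the graded connected $\mWQSym$, applied coefficientwise (this is precisely the remark preceding the theorem in the paper, citing \cite[Prop.~1.4.22]{GrinbergReiner}), and your convergence argument for the alternating sum is sound. For the sub-bialgebra input itself the paper does not give a proof at all --- it cites \cite[Thms.~4.2 and 7.12]{LamPyl} --- but the closure formulas you aim to establish are exactly \eqref{mMPR-prod-eq} and \eqref{mMPR-coprod-eq}, which the paper records later ``as explained in \cite[\S4]{LamPyl}.'' Your coproduct derivation is correct and lands on \eqref{mMPR-coprod-eq} verbatim.

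The one genuine gap is the one you flag yourself: the product-closure claim is asserted, not proved. But the weight-preserving bijection you anticipate as ``the technical heart'' is unnecessary, because the two factors occupy disjoint letter ranges. As you note, every inflation $a \geq_\m u$ has $\max(a) = \max(u) =: m$, so the shift in \eqref{packed-prod-eq} is uniform; but then in any word $c$ appearing in $a \shuffle (b \uparrow m)$, the letters $\leq m$ form exactly the subword $a = c \cap [m]$ and the letters $> m$ form exactly $b \uparrow m = c \cap (m + [n])$, where $n := \max(v)$. Hence the triple consisting of $a$, $b$, and the shuffle positions is \emph{uniquely} recoverable from $c$, so $c$ occurs in $\nabla([u]^{(\beta)}_\m \otimes [v]^{(\beta)}_\m)$ with coefficient exactly $\beta^{\ell(c)-\ell(u)-\ell(v)}$ when $u \leq_\m c \cap [m]$ and $v \uparrow m \leq_\m c \cap (m+[n])$, and with coefficient $0$ otherwise: there are no multiplicities to track, even in your alternating example $[1]^{(\beta)}_\m \cdot [1]^{(\beta)}_\m$. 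It remains only to check that this validity condition depends on $c$ only through its $<_\m$-minimum $\rho(c)$. Duplicating a letter of $c$ duplicates a letter of exactly one of the two restrictions (and leaves the other unchanged), while collapsing an adjacent equal pair of $c$ collapses an adjacent equal pair of exactly one restriction (equal adjacent letters of $c$ lie in a common range and remain adjacent in that restriction's subword); both operations preserve the properties of being $\geq_\m u$, respectively $\geq_\m v \uparrow m$, since collapsing a run of length at least two does not change the $<_\m$-minimum. So $c$ is valid if and only if $\rho(c)$ is, and grouping the valid words by $w = \rho(c) \in \Sm_{m+n}$ yields \eqref{mMPR-prod-eq} and closes your argument.
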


\cite[Thm.~4.2]{LamPyl}
constructs an LC-Hopf algebra over $\ZZ$ with $\Sm_\infty$ as a pseudobasis;
this object is isomorphic to the $\ZZ$-submodule of $\mMPR$ 
with $\left\{\beta^{\ell(v)} [v]^{(\beta)}_\m : v \in \Sm_\infty\right\}$ as a pseudobasis.

\subsection{Big multipermutations}

Next, we review the construction of the  \defn{big multi-Malvenuto-Reutenauer Hopf algebra}
from \cite{LamPyl},
which gives the dual object $\MMPR$ in the top right corner of \eqref{k-diagram}.

A \defn{set composition} 
is a sequence of pairwise disjoint nonempty sets $B=B_1B_2\cdots B_m$
with union $\bigsqcup_{i \in[m]} B_i = [n]$ for some $n \in \NN$;
in this case we define
 $\ell(B) := m$ and 
 $|B| := n$.

\begin{definition}
Let $\SetComp$ be the set of all set compositions
and define $\SetComp_n = \{ B \in \SetComp : |B| = n\}$ for $n \in \NN$.
Let $\MWQSym$ be the free $\ZZ[\beta]$-module with $\SetComp$ as a basis.
\end{definition}
There is a Hopf algebra structure on $\MWQSym$. 
Given $B=B_1B_2\cdots B_m \in \SetComp$ and $k \in \NN$,
let $k + B $ be the sequence of sets $(k+B_1)(k+B_2)\cdots (k+B_m)$.
For $S \subset \PP$,
define $B \cap S$ 
by removing any empty sets from $(B_1\cap S)(B_2\cap S)\cdots (B_m\cap S)$.
The product $\nabla : \MWQSym \otimes \MWQSym \to \MWQSym$
is the linear map with $\nabla(A\otimes B) = 
\sum
_{C \in A\bullet B} 
C$
where
\[A\bullet B := \left\{ C \in \SetComp_{m+n} : C \cap [m] = A\text{ and }C \cap (m+[n]) = m + B\right\}\]
for all $A \in \SetComp_m$ and $B \in \SetComp_n$.
For example,
the elements of $ \{1\}\{2\} \bullet \{1,2\}$ are
$\{1\}\{2\}\{3,4\}$,
$\{1\}\{2,3,4\}$,
$\{1\}\{3,4\}\{2\}$,
$\{1,3,4\}\{2\}$,
and
$\{3,4\}\{1\}\{2\}$.

If $B = B_1B_2\cdots B_m$ is a sequence of subsets of some totally ordered alphabet
and $n=|\bigcup_i B_i|$,
then we let $\st(B):=\phi(B_1) \phi(B_2)\cdots \phi(B_m)$
where $\phi$ is the   order-preserving bijection
$B_1 \cup B_2 \cup \dots \cup B_m \to [n]$.
The coproduct
$
\Delta : \MWQSym \to \MWQSym \otimes \MWQSym
$
is   the linear map with \[
\Delta(A) = \sum_{i=0}^m \st(A_1\cdots A_i) \otimes \st(A_{i+1}\cdots A_m)
\quad\text{for all $A=A_1A_2\cdots A_m \in \SetComp$.}\]
Write 
$\iota : \ZZ[\beta] \to \WQSym$
and
$\epsilon : \WQSym\to \ZZ[\beta]$
for the linear maps with
$\iota(1) =  \emptyset$
and
$\epsilon(A) = \delta_{A\emptyset}$ for $A \in \SetComp$.
These maps make  $\WQSym$ into a graded, connected Hopf algebra \cite[\S2.1]{NovelliThibon}.

Consider the following operations interchanging packed words and set compositions.
First, given $w=w_1w_2\cdots w_n \in \PWords$ with $\max(w)=m$,
define $w^*$ to be the set composition $A_1A_2\cdots A_m \in \SetComp_n$
with $A_i = \{ j \in [n] : w_j = i\}$.
Next, for $A =A_1A_2\cdots A_m \in \SetComp_n$, define $A^*$
to be the packed word $w_1w_2\cdots w_n$ with $w_j = i$ if $j \in A_i$.
Finally define $\langle\cdot,\cdot\rangle : \WQSym \times \mWQSym \to \ZZ[\beta]$ to be the unique bilinear
form,
continuous in the second coordinate, with
\be\label{form1-eq}
\langle A, w \rangle = \delta_{A,w^*}
\qquad\text{for all }A \in \SetComp\text{ and } w\in \PWords.
\ee
 This form is nondegenerate since  $w\mapsto w^*$ and $A \mapsto A^*$
are inverse bijections
$ \PWords \leftrightarrow \SetComp$.
One can also check directly that the relevant products and coproducts
 are compatible 
 in the sense of Section~\ref{completions-sect}.
Therefore $\WQSym$ and $\mWQSym$ are duals
with respect to \eqref{form1-eq}.

 A \defn{big multipermutation} is a set composition whose blocks never contain
 consecutive integers. 
Let $\SM_n$ be the set of big multipermutations $A$
with $|A| = n$,
and define $\SM_\infty :=\bigsqcup_{n\geq 0} \SM_n$.
The operations $w\mapsto w^*$ and $A \mapsto A^*$
restrict to inverse bijections
$ \Sm_n \leftrightarrow \SM_n$. 

Write $\equivM$ for the strongest partial order 
on set compositions with $A \equivM B$ whenever 
$B$ has a block containing $i$ and $i+1$
and $A = \st(B \cap \{1,\dots,i,i+2,\dots,n\})$.
Each lower set under $\equivM$ contains a unique minimal element, which is a big multipermutation.

\begin{definition}
Let $\cJM:=\ZZ[\beta]\spanning\left\{\beta^{|B|-|A|} A- B : A,B\in \SetComp\text{ with }A \equivM B\right\}$.
Denote the quotient module by $\MMPR:=\WQSym / \cJM$
and set
$[A]^{(\beta)}_\M := A + \cJM \in \MMPR$
for $A \in \SetComp$.
\end{definition}

The $\ZZ[\beta]$-module $\MMPR$ is free  with 
 basis  $\left\{[A]^{(\beta)}_\M: A \in \SM_\infty\right\}$.
One can check that $\cJM$ is the orthogonal complement of $\mMPR$,
which
implies the following results from \cite[\S7.2 and \S7.4]{LamPyl}:

\begin{theorem}[{\cite{LamPyl}}]
\label{form2-thm}
The submodule $\cJM$ is a Hopf ideal of $\WQSym$,
so $\MMPR$ is a quotient Hopf algebra.
This Hopf algebra is dual to $\mMPR$ via 
the bilinear form
$\langle\cdot,\cdot\rangle : \MMPR \times \mMPR\to \ZZ[\beta]$, 
continuous in the second coordinate, with
$
\langle [A]^{(\beta)}_\M, [w]^{(\beta)}_\m \rangle = \delta_{A,w^*}
$
for $A \in \SM_\infty$ and $w\in \Sm_\infty.$
\end{theorem}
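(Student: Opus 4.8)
The plan is to take the paper's hint literally: the heart of the matter is to verify that $\cJM$ equals the orthogonal complement $\mMPR^\perp=\{f\in\WQSym:\langle f,b\rangle=0\text{ for all }b\in\mMPR\}$, and then to feed this into the standard duality between a free Hopf algebra and an LC-Hopf subalgebra of its dual. First I would record how the two reduction orders match under the bijections $w\mapsto w^*$, $A\mapsto A^*$: the covering move $A=\st(B\cap\{1,\dots,i,i+2,\dots,n\})$ for a block of $B$ containing $i,i+1$ is exactly the un-duplication step $A^*\leq_\m B^*$, so $A\equivM B\iff A^*\leq_\m B^*$, and this bijection carries $\SM_\infty$ onto $\Sm_\infty$, sending minimal elements to minimal elements. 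Since the only packed word $w$ with $\langle A,w\rangle\ne0$ is $w=A^*$, and $v\leq_\m A^*$ forces $v$ to be \emph{the} (unique) minimal element of the lower set below $A^*$, I obtain for $A\in\SetComp$ and $v\in\Sm_\infty$ the formula
\[
\langle A, [v]^{(\beta)}_\m\rangle =
\begin{cases}
\beta^{\,|A|-|v^*|} & \text{if } v^* = \min A,\\
0 & \text{otherwise},
\end{cases}
\]
where $\min A$ denotes the unique big multipermutation below $A$ under $\equivM$.

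Next comes the main computation, $\cJM=\mMPR^\perp$. For a generator $\beta^{|B|-|A|}A-B$ with $A\equivM B$ one has $\min A=\min B$, and the displayed formula makes its pairing against every $[v]^{(\beta)}_\m$ telescope to $0$, giving $\cJM\subseteq\mMPR^\perp$. For the reverse inclusion I would expand $f=\sum_A c_A A\in\mMPR^\perp$ and reduce using $A\equiv\beta^{|A|-|\min A|}\min A\pmod{\cJM}$ (iterate the covering generators down to $\min A$), which rewrites $f$ modulo $\cJM$ with the coefficient of each $u\in\SM_\infty$ equal to $\sum_{\min A=u}\beta^{\,|A|-|u|}c_A$; the hypothesis $f\in\mMPR^\perp$ says exactly that each such $\beta$-weighted fiber sum vanishes, so $f\in\cJM$. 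I expect this fiber-sum bookkeeping to be the main obstacle: because the coefficients $c_A$ already carry powers of $\beta$, the orthogonality conditions do \emph{not} decouple degree by degree, and one must track the total $\beta$-weight across all sizes $|A|$ sharing a common minimal big multipermutation. This non-graded behaviour is precisely the $K$-theoretic content, and a naive degree-wise argument fails here.

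Granting $\cJM=\mMPR^\perp$, the duality of modules is immediate: the nondegenerate form on $\WQSym\times\mWQSym$ descends to a well-defined pairing $\MMPR\times\mMPR\to\ZZ[\beta]$, and evaluating the displayed formula at $A\in\SM_\infty$ (where $\min A=A$ and the $\beta$-exponent is $0$) yields $\langle[A]^{(\beta)}_\M,[v]^{(\beta)}_\m\rangle=\delta_{A,v^*}$. Thus the pairing matrix between the basis $\{[A]^{(\beta)}_\M\}_{A\in\SM_\infty}$ and the pseudobasis $\{[v]^{(\beta)}_\m\}_{v\in\Sm_\infty}$ is the identity under $\SM_\infty\leftrightarrow\Sm_\infty$, so the descended form is perfect and $\MMPR$ is the module-dual of $\mMPR$.

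Finally I would promote this to a statement about Hopf structures using that $\mMPR$ is an LC-Hopf subalgebra of $\mWQSym$ (Theorem~\ref{mmpr-thm}) together with the form compatibilities of Section~\ref{completions-sect}. Because $\mMPR$ is a subcoalgebra, $\langle\nabla(x\otimes a),c\rangle=\langle x\otimes a,\Delta c\rangle=0$ (and symmetrically on the other side) for $a\in\cJM=\mMPR^\perp$ and $c\in\mMPR$, so $\cJM$ is a two-sided ideal. Because $\mMPR$ is a subalgebra containing the unit $\emptyset$, one gets $\epsilon(a)=\langle a,\emptyset\rangle=0$ and $\langle\Delta a,c\otimes c'\rangle=\langle a,c\cdot c'\rangle=0$ for all $c,c'\in\mMPR$; since $\MMPR$ is free we have $\ker(\WQSym\otimes\WQSym\to\MMPR\otimes\MMPR)=\cJM\otimes\WQSym+\WQSym\otimes\cJM$, and nondegeneracy of the (perfect) tensor-square pairing then forces $\Delta a$ into this kernel, giving the coideal condition. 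Adjointness of antipodes with $\antipode(\mMPR)\subseteq\mMPR$ gives $\langle\antipode(a),c\rangle=\langle a,\antipode(c)\rangle=0$, so $\antipode(\cJM)\subseteq\cJM$. Hence $\cJM$ is a Hopf ideal, $\MMPR$ is a quotient Hopf algebra, and the perfect descended form exhibits it as the Hopf-dual of $\mMPR$, as claimed.
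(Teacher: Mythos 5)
Your proposal is correct and takes essentially the same approach as the paper: the paper's entire proof is the remark that $\cJM$ is the orthogonal complement of $\mMPR$ (deferring the consequences to \cite[\S7.2 and \S7.4]{LamPyl}), and your argument simply fills in that verification via the order correspondence $A\equivM B\iff A^*\leq_\m B^*$, the pairing formula $\langle A,[v]^{(\beta)}_\m\rangle=\beta^{|A|-|v^*|}$ when $v^*=\min A$ (and $0$ otherwise), and the standard deductions that the orthogonal complement of an LC-Hopf subalgebra is a Hopf ideal and that the descended perfect pairing gives the stated Hopf duality. Your worry about the non-graded fiber sums is accurate but harmless, since, as you show, pairing against $[u^*]^{(\beta)}_\m$ computes exactly the coefficient of $u$ after reducing modulo $\cJM$, so orthogonality kills each fiber sum at once.
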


The Hopf algebra $\MMPR$ is a very minor generalization of the Hopf algebra constructed in \cite[Thm.~7.5]{LamPyl},
which can be realized inside $\MMPR$ as the $\ZZ$-submodule 
spanned by $  \beta^{|A|} A$ for $A \in \SM_\infty$.

\subsection{Multifundamental quasisymmetric functions}

Here we review the construction   of an alternate pseudobasis for $\mQSym$,
which arises from viewing $\mMPR$ as a combinatorial LC-Hopf algebra.
The ideas in this section originate in \cite[\S5]{LamPyl}, but we follow the slightly different notational
conventions from \cite[\S3]{LM2019}.
 
For a composition $\alpha=(\alpha_1,\alpha_2,\dots,\alpha_k)\vDash N$
let 
$
I(\alpha) := \{\alpha_1, \alpha_1+\alpha_2,\dots, \alpha_1+\alpha_2 + \dots + \alpha_{k-1}\}
$.
Define $\PSet$ to be the set of nonempty, finite subsets of $\PP=\{1,2,3,\dots\}$. 
Given $S,T \in \PSet$, we write $S\preceq T$ if $\max(S) \leq \min(T)$ and $S \prec T$ if $\max(S) < \min(T)$.
\begin{definition}
\label{lbeta-def}
The \defn{multifundamental quasisymmetric function} of $\alpha\vDash N$ is  
$
 L^{(\beta)}_\alpha   := \sum_S
\beta^{|S| - N} x^S
\in \mQSym
$ where 
the sum is over all $N$-tuples $S=(S_1 \preceq S_2 \preceq \dots \preceq S_N)$ with $S_i \in \PSet$
and $S_i \prec S_{i+1}$ if $i \in I(\alpha)$, and where
$|S| := \sum_{i=1}^N |S_i|$ and $x^S := \prod_{i=1}^N \prod_{j \in S_i} x_{j}$.
\end{definition}

The  quasisymmetric functions $ L^{(\beta)}_\alpha$ are another pseudobasis for $\mQSym$ \cite[\S3.3]{LM2019}.

\begin{remark}\label{beta-rmk}
Setting $\beta=0$ transforms $L^{(\beta)}_\alpha$ to
the \defn{fundamental quasisymmetric functions} $L_\alpha := L^{(0)}_\alpha$ \cite[Def.~3.3.4]{LMW}.
Setting $\beta=1$  turns $L^{(\beta)}_\alpha$
into the quasisymmetric functions denoted $\tilde L_\alpha$ in \cite[\S5.3]{LamPyl}.
One recovers $L^{(\beta)}_\alpha$ from $\tilde L_\alpha$ via 
the identity $ L_\alpha^{(\beta)}= \beta^{-|\alpha|}\tilde L_\alpha(\beta x_1,\beta x_2,\dots)$,
which lets one rewrite many formulas in \cite{LamPyl} in terms of $ L_\alpha^{(\beta)}$.
For example, one can obtain explicit expressions
for the product $L^{(\beta)}_{\alpha'} L^{(\beta)}_{\alpha''}$ and coproduct $\Delta(L^{(\beta)}_{\alpha})$
in this way from  \cite[Props. 5.9 and 5.10]{LamPyl}.
\end{remark}

Write $\zeta_{<}$ for the continuous linear map $\mWQSym \to \ZZ[\beta]\llbracket t\rrbracket $
sending strictly increasing packed words $w$ to $t^{\ell(w)}$ and all other packed words to zero.
Then $\zeta_<$ is an algebra morphism with
$\zeta_{<}([w]^{(\beta)}_\m) = \zeta_{<}(w)$ for
all $w \in \Sm_\infty$.
The \defn{descent set} of a word $w=w_1w_2\cdots w_n$ is given by $\Des(w) := \{ i \in [n-1] : w_i > w_{i+1}\}$.
We write $\cC(w)$ 
for the composition 
of $\ell(w)$
with $I(\cC(w)) = \Des(w)$.
The as yet unmotivated definition of  $L^{(\beta)}_\alpha$ is algebraically natural in view of the following:

\begin{theorem}
\label{tl-thm}
The continuous linear map
with $[w]^{(\beta)}_\m \mapsto L^{(\beta)}_{\cC(w)}$ for all $w \in \Sm_\infty$
is
the unique morphism of combinatorial LC-Hopf algebras
$(\mMPR,\zeta_{<}) \to (\mQSym,\zetaq)$.
\end{theorem}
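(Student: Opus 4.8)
The plan is to obtain both existence and uniqueness from the universal property in Theorem~\ref{abs-thm}, reducing the problem to two tasks: first, checking that $(\mMPR,\zeta_{<})$ is a genuine combinatorial LC-Hopf algebra, and second, identifying the image of its canonical morphism into $(\mQSym,\zetaq)$ with the claimed multifundamental functions. Since $\mMPR$ and $\mQSym$ are both LC-Hopf algebras (Theorem~\ref{mmpr-thm}) and any morphism of bialgebras between Hopf algebras automatically commutes with antipodes, it suffices to work at the level of combinatorial LC-bialgebras throughout; the uniqueness assertion is then exactly the uniqueness clause of Theorem~\ref{abs-thm}.

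First I would verify that $\zeta_{<} \in \XX(\mMPR)$, i.e. that $\zeta_{<}$ restricts to a continuous algebra morphism $\mMPR \to \ZZ[\beta]\llbracket t\rrbracket$ with $\zeta_{<}(\cdot)|_{t=0} = \epsilon$. Continuity and the counit condition are immediate from the definition, since the empty word maps to $1=t^0$ while every nonempty strictly increasing word maps to a positive power of $t$. For multiplicativity I would argue on $\mWQSym$: in a shuffle $v \shuffle (w\uparrow \max v)$ the only strictly increasing summand occurs when both $v$ and $w$ are themselves strictly increasing, in which case it occurs exactly once, so $\zeta_{<}$ is an algebra morphism on $\mWQSym$ and hence on the subalgebra $\mMPR$. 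The identity $\zeta_{<}([w]^{(\beta)}_\m) = \zeta_{<}(w)$ then follows because repeating a letter, as in the generators of $<_\m$, always destroys strict increase, so the only strictly increasing word appearing in $[w]^{(\beta)}_\m$ is $w$ itself, and only when $w$ is increasing.

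With Theorem~\ref{abs-thm} in hand, the canonical morphism is $\Phi(h) = \sum_\alpha (\zeta_{<})_\alpha(h)\, M_\alpha$, and the real work is to show $\Phi([w]^{(\beta)}_\m) = L^{(\beta)}_{\cC(w)}$. I would first compute $\Phi$ on individual packed words by passing through $\mWQSym$: the inclusion $\mMPR \hookrightarrow \mWQSym$ intertwines the two copies of $\zeta_{<}$, so $\Phi$ is the restriction of the analogous morphism on $\mWQSym$, and I may expand $[w]^{(\beta)}_\m = \sum_{w\leq_\m u} \beta^{\ell(u)-\ell(w)} u$ and work summand by summand. Unwinding the iterated coproduct $\Delta^{(k-1)}$ of a packed word $u$ and applying $\zeta_{<}^{\otimes k}$ shows that $(\zeta_{<})_\alpha(u)$ equals $1$ when cutting $u$ into consecutive blocks of sizes $\alpha_1,\dots,\alpha_k$ yields strictly increasing blocks, and $0$ otherwise; since the cut is determined by $\alpha$, this is a genuine indicator. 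Resumming over the run-words $u = w_1^{a_1}\cdots w_n^{a_n} \geq_\m w$ with their $\beta$-weights, I would then match the resulting power series term-by-term against the monomial definition of $L^{(\beta)}_{\cC(w)}$.

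The main obstacle is this last matching, which I would handle by an explicit bijection between the chains $S = (S_1 \preceq \dots \preceq S_n)$ indexing $L^{(\beta)}_{\cC(w)}$, with strict steps exactly at $\Des(w) = I(\cC(w))$, and the data $(u,\alpha)$ arising above. Given such a chain, set $a_i := |S_i|$, so that $u = w_1^{a_1}\cdots w_n^{a_n}$ and $\beta^{|S|-n} = \beta^{\ell(u)-\ell(w)}$, and read off $\alpha$ and the supporting variables from the weakly increasing sequence of column indices obtained by listing the elements of $S_1,\dots,S_n$ in order. The crux is verifying that the block of $u$ attached to a fixed column value $j$ is strictly increasing precisely because of the chain conditions: the set of rows $R_j = \{i : j\in S_i\}$ is forced by $\preceq$ to be an interval of consecutive integers, and the strict-step condition at descents forces $i \notin \Des(w)$, hence $w_i < w_{i+1}$, for every consecutive pair in $R_j$. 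Checking that this correspondence is a weight-preserving bijection, and in particular that the strict-step-at-descents condition on $S$ matches exactly the strictly-increasing-blocks condition on $(u,\alpha)$, is where essentially all of the combinatorial content lies.
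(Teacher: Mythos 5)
Your proposal is correct, but it takes a genuinely different route from the paper, essentially inverting the division of labor. The paper imports the hard fact that $[w]^{(\beta)}_\m \mapsto L^{(\beta)}_{\cC(w)}$ is a morphism of LC-bialgebras from \cite[Thm.~5.11]{LamPyl}; by the uniqueness clause of Theorem~\ref{abs-thm}, all that then remains is the one-line zeta-compatibility check $\zetaq(L^{(\beta)}_{\cC(w)}) = \zeta_{<}([w]^{(\beta)}_\m)$, carried out by setting $x_1=t$ and $x_i=0$ for $i>1$ and observing that the only possible chain is $(\{1\},\dots,\{1\})$, which survives exactly when $\Des(w)=\varnothing$. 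You instead lean on the existence clause of Theorem~\ref{abs-thm}: after verifying $\zeta_{<}\in\XX(\mMPR)$ (your shuffle argument is right, since a strictly increasing packed word must be $12\cdots m$, so the unique increasing shuffle is the concatenation; your proofs of $\zeta_{<}([w]^{(\beta)}_\m)=\zeta_{<}(w)$ and of the intertwining with $\mWQSym$, facts the paper asserts without proof, also check out), you compute the canonical morphism $\Phi$ directly on $[w]^{(\beta)}_\m$ and match it term-by-term against the monomial definition of $L^{(\beta)}_{\cC(w)}$. Your bijection is sound at the crux you flagged: if $j\in S_i\cap S_{i''}$ with $i<i'<i''$, then $\min S_{i'}\geq j\geq \max S_{i'}$ forces $S_{i'}=\{j\}$, so $R_j$ is an interval; moreover consecutive $i,i+1\in R_j$ force $j=\max S_i=\min S_{i+1}$, which both makes the value-$j$ letters consecutive in $u$ and translates the strict-step-at-descents condition exactly into strict increase of the blocks. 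What each approach buys: the paper's proof is a few lines but not self-contained, since the bialgebra-morphism property it cites is in effect the statement $\Phi([w]^{(\beta)}_\m)=L^{(\beta)}_{\cC(w)}$ itself; your argument reproves Lam--Pylyavskyy's theorem from scratch via a $P$-partition-style expansion, at the cost of carrying out the full bijective verification, which you correctly identify as where essentially all the combinatorial content lies.
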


\begin{proof} The claim that this map is a morphism of LC-bialgebras (and therefore also of LC-Hopf algebras) is equivalent to \cite[Thm.~5.11]{LamPyl}.
Choose $w \in \Sm_\infty$ and set $\alpha := \cC(w)$ and $N := |\alpha| = \ell(w)$.
In view of Theorem~\ref{abs-thm}, we just need
to check that $\zeta_{<}([w]^{(\beta)}_\m) = \zetaq(L^{(\beta)}_{\alpha})$.
As $\zetaq$ sends $x_1\mapsto t$ and $x_i\mapsto 0$ for $i>0$,
we either have $\zetaq(L^{(\beta)}_{\alpha}) = t^N$ if the $N$-tuple $S = (\{1\} \preceq \{1\} \preceq \dots \preceq \{1\})$
satisfies the conditions in Definition~\ref{lbeta-def}, or else  $\zetaq(L^{(\beta)}_{\alpha}) =0$.
This means that $\zetaq(L^{(\beta)}_{\alpha}) = t^N = \zeta_{<}([w]^{(\beta)}_\m)$  
if $I(\alpha) = \Des(w)$ is empty and otherwise
$\zetaq(L^{(\beta)}_{\alpha}) = 0 = \zeta_{<}([w]^{(\beta)}_\m)$ as needed.
\end{proof}

\subsection{Noncommutative symmetric functions}

We now review the construction from \cite{LamPyl} of the  \defn{multi-noncommutative symmetric functions} $\MNSym$
 in the top row of \eqref{k-diagram}.
The \defn{descent set} of a set composition $A = A_1 A_2\cdots A_m \in \SetComp_n$
is given by
$\Des(A) := \{ i  \in [n-1]: i+1 \in A_j\text{ and }i \in A_k\text{ for any indices }j<k\}.$
One has $\Des(A) = \Des(A^*)$.

\begin{definition}
For a composition $\alpha \vDash n$,
define
$
\tR_\alpha := \sum_{\Des(A) =I(\alpha) } [A]^{(\beta)}_\M \in \MMPR
$
where the sum is over big multipermutations $A \in \SM_n$.
These sums are linearly independent, and we
 define $\MNSym$ to be the free $\ZZ[\beta]$-submodule of $\MMPR$
with $\left\{\tR_\alpha : \alpha \text{ is a composition} \right\}$ as a basis.
\end{definition}

Recall that we have a form 
$\langle\cdot,\cdot\rangle : \MMPR \times \mMPR\to \ZZ[\beta]$
from Theorem~\ref{form2-thm}.
Evidently if $\alpha$ is a composition and $w \in \Sm_\infty$ has $\gamma = \cC(w)$  
then  $\langle \tR_{\alpha},[w]^{(\beta)}_\m\rangle= \delta_{\alpha\gamma}$.
We reuse the symbol $\langle \cdot,\cdot\rangle $ 
to denote the bilinear form $\langle \cdot,\cdot\rangle : \MNSym \times \mQSym \to \ZZ[\beta]$, continuous in the second coordinate,
 with
$ \langle \tR_{\alpha}, L^{(\beta)}_{\gamma} \rangle = \delta_{\alpha\gamma}$ for all $\alpha$ and $\gamma$.
The following is equivalent to \cite[Thm.~8.4]{LamPyl}:

\begin{theorem}[{\cite{LamPyl}}]
\label{form3-thm}
The module $\MNSym$ is a Hopf subalgebra of $\MMPR$. This subalgebra is the Hopf algebra 
dual to $\mQSym$ via $\langle \cdot,\cdot\rangle $, and the  map $\mMPR \to \mQSym$ 
with 
 $[w]^{(\beta)}_\m \mapsto L^{(\beta)}_{\cC(w)}$ for all $w \in \Sm_\infty$
from Theorem~\ref{tl-thm}
is the morphism adjoint to 
the inclusion
$\MNSym \hookrightarrow \MMPR$.
\end{theorem}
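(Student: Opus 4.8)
The plan is to obtain all three assertions from the duality formalism of Section~\ref{completions-sect} applied to the morphism $\Phi : \mMPR \to \mQSym$ of Theorem~\ref{tl-thm}, which sends $[w]^{(\beta)}_\m \mapsto L^{(\beta)}_{\cC(w)}$. Here $\mMPR$ is the LC-dual of the free module $\MMPR$ via the form of Theorem~\ref{form2-thm}, while $\mQSym$ is the LC-dual of the free module $\MNSym$ via the nondegenerate pairing with $\langle \tR_\alpha, L^{(\beta)}_\gamma\rangle = \delta_{\alpha\gamma}$ (using that the $L^{(\beta)}_\gamma$ form a pseudobasis of $\mQSym$). Since $\Phi$ is continuous, it is the adjoint of a unique linear map $\phi : \MNSym \to \MMPR$, and the crux of the argument is to identify $\phi$ with the inclusion.

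First I would compute $\phi$ by pairing $\phi(\tR_\alpha)$ against the pseudobasis $\{[w]^{(\beta)}_\m : w \in \Sm_\infty\}$ of $\mMPR$. Writing $\gamma = \cC(w)$ and invoking the defining adjunction together with Theorem~\ref{tl-thm},
\[
\langle \phi(\tR_\alpha),\, [w]^{(\beta)}_\m\rangle = \langle \tR_\alpha,\, \Phi([w]^{(\beta)}_\m)\rangle = \langle \tR_\alpha,\, L^{(\beta)}_\gamma\rangle = \delta_{\alpha\gamma},
\]
which matches the value $\langle \tR_\alpha, [w]^{(\beta)}_\m\rangle = \delta_{\alpha\gamma}$ recorded just before the theorem. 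Nondegeneracy of the form on $\MMPR \times \mMPR$ then forces $\phi(\tR_\alpha) = \tR_\alpha$, so $\phi$ is exactly the inclusion $\MNSym \hookrightarrow \MMPR$. This immediately yields the third assertion.

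For the first assertion I would appeal to the general correspondence of Section~\ref{completions-sect} (see \cite{Marberg2018}), whereby the adjoint of a morphism of LC-Hopf algebras is a morphism of Hopf algebras. As $\Phi$ is a morphism of LC-Hopf algebras by Theorem~\ref{tl-thm}, its adjoint $\phi$ is a Hopf-algebra morphism; being the inclusion, it exhibits $\MNSym$ as a Hopf subalgebra of $\MMPR$ with the restricted structure maps. The second assertion, that $\MNSym$ is then dual to $\mQSym$ via $\langle\cdot,\cdot\rangle$, reduces to verifying the four compatibility conditions of Section~\ref{completions-sect}; each is obtained by transporting the corresponding identity for $(\MMPR,\mMPR)$ from Theorem~\ref{form2-thm} across the relation $\langle a, \Phi(b)\rangle = \langle a, b\rangle$ for $a \in \MNSym$ and $b \in \mMPR$ (valid because $\phi$ is the inclusion), using that $\Phi$ is a bialgebra morphism. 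Since every $L^{(\beta)}_\gamma$ equals $\Phi([w]^{(\beta)}_\m)$ for a suitable $w \in \Sm_\infty$ (the map $w \mapsto \cC(w)$ being surjective onto compositions), it suffices to check these identities on the pseudobasis $\{L^{(\beta)}_\gamma\}$ and extend by continuity.

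The genuinely substantive input is Theorem~\ref{tl-thm} — equivalently \cite[Thm.~5.11]{LamPyl} — which asserts that $\Phi$ respects products and coproducts; this is where the real combinatorics of the $L^{(\beta)}_\alpha$ lives, and once it is granted the remainder is formal duality. Within the present argument, the step demanding the most attention is the verification of the Hopf-duality axioms in the second assertion: although each reduces to the corresponding identity for $(\MMPR,\mMPR)$ via the adjunction, one must check that $\Phi$ being only surjective rather than bijective still lets these identities descend to $\mQSym$. This is harmless, since $\ker \Phi = \MNSym^\perp$ makes the relevant pairings depend only on $\Phi(b)$, but it is the one place where the formalism must be applied with care rather than cited outright.
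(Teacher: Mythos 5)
Your proposal is correct in outline, but note that the paper offers no proof of Theorem~\ref{form3-thm} to compare against: the statement is simply quoted as equivalent to \cite[Thm.~8.4]{LamPyl}. What you have reconstructed is, in effect, the argument the paper itself deploys later for the shifted analogue, Theorem~\ref{mpeakp-thm}: identify the adjoint of the combinatorial morphism $\Phi$ from Theorem~\ref{tl-thm}, recognize the submodule in question as the orthogonal complement of $\ker\Phi$, and let the LC-duality formalism of Section~\ref{completions-sect} carry the rest. Your computation pinning down the adjoint is exactly the right crux: pairing $\phi(\tR_\alpha)$ against the pseudobasis $\{[w]^{(\beta)}_\m : w \in \Sm_\infty\}$ and matching $\langle \tR_\alpha, L^{(\beta)}_{\cC(w)}\rangle = \delta_{\alpha,\cC(w)} = \langle \tR_\alpha, [w]^{(\beta)}_\m\rangle$ forces $\phi(\tR_\alpha) = \tR_\alpha$ by nondegeneracy, and the surjectivity of $w \mapsto \cC(w)$ onto compositions correctly lets you check all duality identities on the image of $\Phi$.

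Two steps need repair. First, your derivation of the first assertion is circular as written: you cannot invoke ``the adjoint of a morphism of LC-Hopf algebras is a morphism of Hopf algebras'' to conclude that $\MNSym$ is a Hopf subalgebra, because for $\phi$ to be a Hopf morphism the module $\MNSym$ must already carry a Hopf structure dual to $\mQSym$ --- which is precisely what the first two assertions claim. The non-circular route, which your closing paragraph nearly articulates, is to prove $(\ker\Phi)^\perp = \MNSym$ inside $\MMPR$ and check closure directly: for $a_1, a_2 \in \MNSym$ and $b \in \ker\Phi$ one has $\langle \nabla(a_1 \otimes a_2), b\rangle = \langle a_1 \otimes a_2, \Delta(b)\rangle = 0$ since $\Phi$ is a coalgebra morphism and elements of $\MNSym$ pair through $\Phi$, and dually for the coproduct. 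Note that you only assert $\ker\Phi = \MNSym^\perp$; the closure argument consumes the reverse containment $(\ker\Phi)^\perp \subseteq \MNSym$, which amounts to verifying that an element of $\MMPR$ whose coefficient at $[A]^{(\beta)}_\M$ depends only on $|A|$ and $\Des(A)$ lies in the span of the $\tR_\alpha$ --- easy, but it is this double-perp fact that does the work. Second, you never address the antipode: closure under product and coproduct exhibits $\MNSym$ only as a sub-bialgebra, whereas a Hopf subalgebra must be stable under $\antipode$. This follows either because $\Phi$, being a bialgebra morphism between (LC-)Hopf objects, intertwines antipodes, so $\antipode(\ker\Phi) \subseteq \ker\Phi$ and hence $\antipode$ preserves $(\ker\Phi)^\perp = \MNSym$ by adjointness of the antipodes of $\MMPR$ and $\mMPR$; or by the Takeuchi-style observation the paper makes for $\mMPR$ just before Theorem~\ref{mmpr-thm}. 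With these two insertions your argument is complete and self-contained modulo Theorem~\ref{tl-thm}, i.e., modulo \cite[Thm.~5.11]{LamPyl}, which you rightly single out as the substantive input.
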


%
%
The elements $\tR_{n}:=\tR_{(n)}$ for $n \in \PP$
 freely generate $\MNSym$ as an algebra \cite[Prop.~8.3]{LamPyl},
and one can view $\MNSym$ as a graded connected Hopf algebra in which $\tR_{n}$ has degree $n$.
 In fact, $\MNSym$ is
isomorphic to the usual  \defn{noncommutative symmetric functions} $\NSym$, just defined with scalar ring $\ZZ[\beta]$, by
 \cite[Prop.~8.5]{LamPyl}.

\subsection{Symmetric functions}\label{sym-sect}

 A \defn{symmetric function} in $\ZZ[\beta]\llbracket x_1,x_2,\dots\rrbracket $ is a power series 
 that is invariant under permutations of the $x_i$ variables.
The first column of 
 \eqref{k-diagram} contains these familiar  power series:
  
 \begin{definition}
Define $\MSym$ to be the Hopf subalgebra of symmetric functions of bounded degree in $\QSym$.
Let $\mSym$ be the LC-Hopf subalgebra of all symmetric functions in $\mQSym$.
\end{definition}

Let 
$\{s_\lambda\}$ denote the basis of
Schur functions for $\MSym$, indexed by partitions $\lambda$.
It is well-known that $\MSym$ and $\mSym$ are dual Hopf algebras with respect to the nondegenerate bilinear form $\langle\cdot,\cdot\rangle : \MSym \times \mSym \to \ZZ[\beta]$,
 continuous in the second coordinate, that has
$
\langle s_\lambda,s_\mu \rangle = \delta_{\lambda\mu}$.

Lam and Pylyavskyy \cite[Thm.~9.15]{LamPyl} show that $\mSym$ and $\MSym$ have another pair of dual bases given respectively by
 the \defn{stable Grothendieck polynomials} $\{G^{(\beta)}_\lambda\}$
and the \defn{dual stable Grothendieck polynomials} $\{g^{(\beta)}_\lambda\}$,
which satisfy $\langle g^{(\beta)}_\lambda, G^{(\beta)}_\mu\rangle =\delta_{\lambda\mu}$.
 These ``polynomials'' are symmetric generating functions for \defn{semistandard set-valued tableaux}
 and \defn{reverse plane partitions} of shape $\lambda$, respectively.
 
 Since in this article we will never need to work with $G^{(\beta)}_\lambda$ and $g^{(\beta)}_\lambda$ directly,
 we omit their definitions.
 If one does require precise definitions that follow our notational conventions, one 
should adopt the formulas in \cite[Thm.~4.6]{Yeliussizov2019} with $\beta$ replaced by $-\beta$.

\begin{theorem}
\label{sym-dual-thm}
The map $\MNSym \to \MSym$ adjoint to the inclusion
$\mSym \hookrightarrow \mQSym$
relative to the forms 
$\langle\cdot,\cdot\rangle$
 is the algebra morphism
with $\tR_{n} \mapsto g^{(\beta)}_{n} := g^{(\beta)}_{(n)}$ for all $n \in \PP$.
\end{theorem}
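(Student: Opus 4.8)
The plan is to verify the theorem by exploiting the duality framework already established, reducing everything to a statement about how the zeta functions interact. The key observation is that we have a chain of adjunctions, and the map $\MNSym \to \MSym$ we want to describe should be forced by the requirement that it be adjoint to the inclusion $\mSym \hookrightarrow \mQSym$. First I would recall that by Theorem~\ref{form3-thm}, the inclusion $\MNSym \hookrightarrow \MMPR$ is adjoint to the morphism $\mMPR \to \mQSym$ sending $[w]^{(\beta)}_\m \mapsto L^{(\beta)}_{\cC(w)}$, and that $\MNSym$ is dual to $\mQSym$ via the pairing with $\langle \tR_\alpha, L^{(\beta)}_\gamma\rangle = \delta_{\alpha\gamma}$. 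So the adjoint of $\mSym \hookrightarrow \mQSym$ is a well-defined algebra morphism $\pi : \MNSym \to \MSym$, and the content of the theorem is simply to compute the images $\pi(\tR_n)$.

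The core computation I would carry out is to pin down $\pi(\tR_n)$ using the defining adjunction property: for all $f \in \mSym$ we need $\langle \pi(\tR_n), f\rangle_{\MSym\times\mSym} = \langle \tR_n, f\rangle_{\MNSym\times\mQSym}$, where on the right we view $f$ as an element of $\mQSym$ via the inclusion. Testing against the pseudobasis $\{G^{(\beta)}_\mu\}$ of $\mSym$ and using the dual basis relation $\langle g^{(\beta)}_\lambda, G^{(\beta)}_\mu\rangle = \delta_{\lambda\mu}$, it suffices to show that $\langle \tR_n, G^{(\beta)}_\mu\rangle = \delta_{\mu,(n)}$; this exactly identifies $\pi(\tR_n)$ as the dual-basis element $g^{(\beta)}_{(n)}$. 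So the whole theorem collapses to computing the coefficient of $L^{(\beta)}_{(n)}$ in the $L^{(\beta)}$-expansion of $G^{(\beta)}_\mu$, viewed inside $\mQSym$. Equivalently, since $\langle \tR_\alpha, L^{(\beta)}_\gamma\rangle = \delta_{\alpha\gamma}$ by the definition preceding Theorem~\ref{form3-thm}, I would compute $\langle \tR_n, G^{(\beta)}_\mu\rangle$ by extracting the coefficient of $L^{(\beta)}_{(n)}$ when $G^{(\beta)}_\mu$ is expanded in the multifundamental basis.

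The key step is therefore to understand which multifundamental functions $L^{(\beta)}_\alpha$ can appear with $\alpha = (n)$ in the expansion of a set-valued-tableau generating function $G^{(\beta)}_\mu$. The composition $(n)$ corresponds to the empty descent set, so $L^{(\beta)}_{(n)}$ collects the ``purely increasing'' contributions. I expect that $G^{(\beta)}_\mu$ has a nonzero $L^{(\beta)}_{(n)}$-coefficient precisely when $\mu$ is a single row, i.e. $\mu = (n)$, with coefficient $1$ — reflecting that the only set-valued tableau of shape $\mu$ whose reading word is weakly increasing with no forced descent is a single row. Concretely I would pass through the standardization/descent statistics that govern the $L^{(\beta)}$-expansion (as in \cite[\S5]{LamPyl} or \cite[\S3]{LM2019}) and check the leading behavior; alternatively, one can appeal to the classical $\beta=0$ statement, where $\langle R_n, s_\mu\rangle = \delta_{\mu,(n)}$ since $s_\mu$ expands in fundamentals $L_\alpha$ indexed by the descent compositions of standard tableaux and $(n)$ occurs only for the one-row shape, and then argue that the $\beta$-graded refinement does not introduce new $L^{(\beta)}_{(n)}$ contributions from higher-degree terms.

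The main obstacle I anticipate is precisely this last bookkeeping: because the objects live in completions and the $G^{(\beta)}_\mu$ are of unbounded degree, I must be careful that the $L^{(\beta)}_{(n)}$-coefficient receives contributions only from the degree-$n$ (leading) part of $G^{(\beta)}_\mu$ and that no higher set-valued terms sneak in an extra $L^{(\beta)}_{(n)}$. Establishing that $\langle \tR_n, G^{(\beta)}_\mu\rangle = \delta_{\mu,(n)}$ cleanly — rather than merely modulo higher degree — is where the real work lies; I would handle it by tracking the minimal-degree support of $G^{(\beta)}_\mu$ and invoking that the composition $(n)$ has no descents so only the minimal-length (non-set-valued) fillings contribute. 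Once this coefficient identity is in hand, the adjunction forces $\pi(\tR_n) = g^{(\beta)}_{(n)}$, and since the $\tR_n$ freely generate $\MNSym$ as an algebra and $\pi$ is an algebra morphism, the map is completely determined, completing the proof.
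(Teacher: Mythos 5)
Your proposal follows essentially the same route as the paper's proof: both reduce the theorem, via free generation of $\MNSym$ by the $\tR_n$ and the fact that the adjoint of the coalgebra inclusion is an algebra morphism, to the single pairing identity $\langle \tR_n, G^{(\beta)}_\lambda\rangle = \delta_{(n),\lambda}$, verified through the $L^{(\beta)}$-expansion of $G^{(\beta)}_\lambda$. The only difference is that the paper simply cites this expansion from \cite[Eq.\ (3.10)]{LM2019}, whereas you sketch re-deriving the relevant coefficient combinatorially --- and your identified crux (that the empty-descent composition $(n)$ picks up no contributions from higher set-valued terms) is exactly the content packaged into that cited formula.
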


\begin{proof}
The elements $\tR_{n}$
 freely generate $\MNSym$ \cite[Prop.~8.3]{LamPyl},
 so there is a unique algebra morphism $\MNSym \to \MSym$
 with $\tR_n \mapsto g^{(\beta)}_{n} $ for $n \in \PP$. 
To show that this is the adjoint to $\mSym \hookrightarrow \mQSym$,
it suffices to check that $\langle \tR_{n}, G^{(\beta)}_\lambda \rangle= \delta_{(n),\lambda}$ for all partitions $\lambda$,
as we already know this is the value of $\langle g^{(\beta)}_{n}, G^{(\beta)}_\lambda \rangle $.
As we have $ \langle \tR_{\alpha}, L^{(\beta)}_{\gamma} \rangle = \delta_{\alpha\gamma}$, the desired identity can be deduced from \cite[Eq.\ (3.10)]{LM2019},
which gives the expansion of $G^{(\beta)}_\lambda$ into $L^{(\beta)}_\gamma$'s.
\end{proof}

 \cite[Thm.~9.13]{LamPyl} computes the 
 image of $\tR_\alpha$ 
 under the adjoint map $\MNSym \to \MSym$;
 this  turns out to be a dual stable Grothendieck polynomial indexed by a specific skew ribbon shape.

\section{Shifted $K$-theoretic Hopf algebras}\label{main-sect}

We now turn to the shifted analogues of the diagram \eqref{k-diagram} provided in \eqref{shk-diagram}.
We start by 
describing two shifted analogues of $\mQSym$
in Section~\ref{multipeak-sect}.
In Section~\ref{MPeak-sect} we investigate the duals of these LC-Hopf algebras, which provide
$K$-theoretic analogues of the \defn{peak algebra} studied in \cite{Nyman,Schocker}.
Sections~\ref{shifted-sym-sect} and \ref{pq-sect2} 
give an overview of the four (LC-)Hopf algebras of symmetric functions
on the left sides of the two diagrams in.\eqref{shk-diagram}.
In Sections~\ref{antipode-sect} 
we derive several antipode formulas, and then in
Section~\ref{open-sect} we conclude with a survey of open problems.

\subsection{Multipeak quasisymmetric functions}\label{multipeak-sect}

Our first task is to define the shifted analogues of $\mQSym$, which are displayed as
$\mQSymQ$ and $\mQSymP$ in \eqref{shk-diagram}.
This material is partly review from \cite{LM2019}.

For $i \in \ZZ$ let $i' := i - \frac{1}{2}$ so that $\frac{1}{2} \ZZ = \{ \dots < 0' < 0 < 1' < 1 < \dots\}$.
Define $\MSet$ to be the set of finite, nonempty subsets of $\MM = \{1'<1<2'<2< \dots\}.$
We again write $S\prec T$ if $\max(S) < \min(T)$ and $S \preceq T$ if $\max(S) \leq \min(T)$
for $S,T \in \MSet$. 
Let 
$x^S := \prod_{i=1}^N \prod_{j \in S_i} x_{\lceil j\rceil }$ and $ |S| := \sum_{i=1}^N |S_i|$
for any sequence $S=(S_1,S_2,\dots,S_N)$ with $S_i \in \MSet$.
A \defn{peak composition} is a composition $\alpha$ 
with $\alpha_i \geq 2$ for $1\leq i < \ell(\alpha)$.
Recall that $I(\alpha) = \{\alpha_1, \alpha_1+\alpha_2, \dots\} \setminus \{|\alpha|\}$. 

\begin{definition}\label{peak-def}
Suppose $\alpha \vDash N$ is a peak composition. Define
$
K^{(\beta)}_\alpha   := \sum_{
S
}
\beta^{|S|-N} x^S
$
where the sum is
 over all $N$-tuples $S=(S_1\preceq S_2 \preceq \dots\preceq S_N)$ of sets $S_i \in \MSet$
with
\be\label{peak-def-eq1}
 S_i \cap S_{i+1} \subset \{1',2',3',\dots\}\text{ if $i \in I(\alpha)$}
\quand
S_i \cap S_{i+1} \subset \{1,2,3,\dots\}\text{ if $i \notin I(\alpha)$.}
\ee
Define $\bK^{(\beta)}_\alpha   := \sum_{
S
}
\beta^{|S|-N} x^S$ where the sum is over the subset of such $N$-tuples $S$
also satisfying
\be\label{peak-def-eq2}
S_{i+1} \subset \{1,2,3,\dots\}\text{ if $i \in \{0\} \sqcup I(\alpha)$.}
\ee
Let $\mQSymQ $ and $\mQSymP$, respectively,
be the  LC-$\ZZ[\beta]$-modules
with  $\{ K^{(\beta)}_\alpha\}$ and $\{ \bK^{(\beta)}_\alpha\}$ (where
$\alpha$ ranges over all peak compositions)
as respective pseudobases.
\end{definition}

The power series $K^{(\beta)}_\alpha$ and $\bK^{(\beta)}_\alpha$ were introduced in \cite{LM2019}
in the context of an ``enriched'' theory of \defn{set-valued $P$-partitions}.
Setting $\beta=0$ transforms $K^{(\beta)}_\alpha$ and $\bK^{(\beta)}_\alpha$ to 
the \defn{peak quasisymmetric functions} defined in \cite[\S3]{Stembridge1997a},
and this implies that   $\{ K^{(\beta)}_\alpha\}$ and $\{ \bK^{(\beta)}_\alpha\}$ are linearly independent.
 However, $K^{(\beta)}_\alpha$ and $\bK^{(\beta)}_\alpha$ are typically not linear combinations 
 (even using rational coefficients and infinitely many terms) of
 the functions $K_\alpha := K^{(0)}_\alpha$
 and 
 $\bK_\alpha := \bK^{(0)}_\alpha$ from \cite[\S3]{Stembridge1997a}.

Both $\mQSymQ$ and $\mQSymP$ are LC-Hopf subalgebras of $\mQSym$,
and
if $\mQSymQ_{\QQ[\beta]}$ is the LC-Hopf algebra defined over $\QQ[\beta]$
 with $\{ K^{(\beta)}_\alpha\}$ as a pseudobasis,
then we have $\mQSymP = \mQSymQ_{\QQ[\beta]} \cap \mQSym\supsetneq \mQSymQ$ \cite[Thm.~4.19]{LM2019}.
More concretely, it holds that
\be\label{417-eq} K^{(\beta)}_\alpha  = \sum_{\delta \in \{0,1\}^\ell} 2^{\ell-|\delta|} \beta^{|\delta|} \oK^{(\beta)}_{\alpha+\delta}
\quand \oK^{(\beta)}_\alpha  =\sum_{\delta \in (\NN)^\ell} 2^{-\ell-|\delta|}(-\beta)^{|\delta|} K^{(\beta)}_{\alpha+\delta}\ee
for any peak composition $\alpha $ with $\ell=\ell(\alpha)$ parts, where $|\delta| := \sum_{i=1}^\ell \delta_i$  \cite[Cor.~4.17]{LM2019}.
When $\beta=0$, the Hopf algebras $\mQSymQ$ and $\mQSymP$ reduce to (the completions of) the ones denoted
$\mathbf{\Pi}$ and $ \bar{ \mathbf\Pi}$ in 
 \cite{BMSW,Stembridge1997a},
 which have been further studied in a number of places (see, e.g., \cite{BilleraHsiaoWill,Hsiao,HsiaoPetersen,Li2016,Petersen}).

Recall the definition of $\zeta_{<}:\mWQSym \to \ZZ[\beta]\llbracket t\rrbracket $ 
and write 
$\zeta_{>}:\mWQSym \to \ZZ[\beta]\llbracket t\rrbracket $
for the continuous linear map 
whose value at a packed word $w=w_1w_2\cdots w_n \in \PWords$ is
\be
\zeta_>(w_1w_2\cdots w_n) := \zeta_<(w_n \cdots w_2w_1)
= 
\begin{cases} t^n &\text{if }w_1>w_2>\dots> w_m \\ 0&\text{otherwise}.\end{cases}
\ee
This is an algebra morphism
with $\zeta_{>}([w]^{(\beta)}_\m) = \zeta_{>}(w)$ for $w \in \Sm_\infty$.
By Theorem~\ref{abs-thm} there is a unique morphism of combinatorial LC-Hopf algebras
$(\mMPR,\zeta_{>}) \to (\mQSym,\zetaq)$.
Although this map is different from the one in Theorem~\ref{tl-thm},
it also sends $\{ [w]^{(\beta)}_\m : w \in \Sm_\infty\}$ 
to the pseudobasis of multifundamental quasisymmetric functions $\{ L^{(\beta)}_\alpha\}$; see the proof of \cite[Prop.~6.3]{LM2019}.

To construct something new, we consider the \defn{convolution} of the maps $\zeta_{>}$ and $\zeta_{<}$ defined by
the formula
$ \zeta_{>|<} := \nabla_{\ZZ[\beta]} \circ (\zeta_> \htimes \zeta_<)\circ \Delta : \mWQSym \to \ZZ\llbracket t\rrbracket . $
This is a continuous algebra morphism $ \mWQSym \to \ZZ\llbracket t\rrbracket $.
If $w=w_1w_2\cdots w_n \in \PWords$ then 
\be \zeta_{>|<}(w)=
\begin{cases}
2 t^n &\text{if $w_1>\dots > w_i <\dots < w_n$ for some $i \in [n]$} \\
t^n &\text{if $w_1>\dots > w_i = w_{i+1} < \dots < w_n$ for some $i \in [n-1]$} \\
1 & \text{if }n=0\\
0&\text{otherwise}.\end{cases}
\ee
It follows that if $w=w_1w_2\cdots w_n \in \Sm_\infty$ then
\be \zeta_{>|<}([w]^{(\beta)}_\m) = \begin{cases}
t^n(2+\beta t) &\text{if $w_1>\dots > w_i < \dots < w_n$ for some $i \in [n]$} \\
1 &\text{if } n=0\\
0&\text{otherwise}.
\end{cases} 
\ee

For any composition $\alpha\vDash n$, let $\Lambda(\alpha)$ be the 
unique peak composition of $n$ satisfying $ I(\Lambda(\alpha)) = \{ i \in I(\alpha) : 0 < i-1 \notin I(\alpha)\}.$
For example, one has $\Lambda((1,2,1,1,1,3,1)) = (3,6,1)$. 
Then define
$\ttheta : \mQSym \to \mQSymQ$
to
be the continuous linear map with 
\be\label{ttheta-eq} 
\ttheta(L^{(\beta)}_\alpha) = K^{(\beta)}_{\Lambda(\alpha)}
\quad\text{for all compositions $\alpha$.}
\ee
By \cite[Cor.~4.22]{LM2019},
this map is a surjective morphism of LC-Hopf algebras.

The \defn{peak set} of a word $w=w_1w_2\cdots w_n$ is 
$\PeakSet(w) := \{1<i<n : w_{i-1} < w_i > w_{i+1}\}.$ 
Let $\cD(w)$ be the unique peak composition $\alpha\vDash\ell(w)$ with $I(\alpha) = \PeakSet(w)$.
If $w \in \Sm_\infty$ then 
\be\label{peakset-eq}
\PeakSet(w) = \{ i \in \Des(w) : 0 < i-1 \notin \Des(w)\}\quad\text{so}\quad 
\Lambda(\cC(w))=   \cD(w),\ee
and we have
$ \zeta_{>|<}([w]^{(\beta)}_\m) \neq 0$ if and only if $\PeakSet(w) = \varnothing$, in which case $\ell(\cD(w))\leq 1$.
The multipeak quasisymmetric functions are motivated algebraically by this analogue of Theorem~\ref{tl-thm}:

\begin{theorem}\label{peak-tl-thm}
The continuous linear map with $[w]^{(\beta)}_\m \mapsto K^{(\beta)}_{\cD(w)}$ for $w \in \Sm_\infty$
is the unique morphism of 
combinatorial LC-Hopf algebras 
$(\mMPR,\zeta_{>|<}) \to (\mQSym, \zetaq)$.
\end{theorem}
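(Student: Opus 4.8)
The plan is to lean on the universal property recorded in Theorem~\ref{abs-thm}: it already asserts that there is a \emph{unique} morphism of combinatorial LC-bialgebras $(\mMPR,\zeta_{>|<}) \to (\mQSym,\zetaq)$, and since every object here is an LC-Hopf algebra this morphism is automatically one of LC-Hopf algebras. So uniqueness is free, and the whole task is to produce \emph{some} morphism of combinatorial LC-Hopf algebras with the stated effect on the pseudobasis $\{[w]^{(\beta)}_\m\}$ and then invoke Theorem~\ref{abs-thm} to conclude it is the unique one.

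First I would realize the candidate map as a composition of morphisms already in hand. Write $\Phi_< : (\mMPR,\zeta_<) \to (\mQSym,\zetaq)$ for the morphism of Theorem~\ref{tl-thm}, so that $\Phi_<([w]^{(\beta)}_\m) = L^{(\beta)}_{\cC(w)}$, and recall the surjective morphism of LC-Hopf algebras $\ttheta : \mQSym \to \mQSymQ \subseteq \mQSym$ with $\ttheta(L^{(\beta)}_\alpha) = K^{(\beta)}_{\Lambda(\alpha)}$. The composite $\Psi := \ttheta \circ \Phi_<$ is then a morphism of LC-Hopf algebras $\mMPR \to \mQSym$, and for $w \in \Sm_\infty$ it satisfies $\Psi([w]^{(\beta)}_\m) = K^{(\beta)}_{\Lambda(\cC(w))} = K^{(\beta)}_{\cD(w)}$, the last equality being precisely \eqref{peakset-eq}. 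Thus $\Psi$ is exactly the map in the statement, and it remains only to check the zeta-function compatibility $\zetaq \circ \Psi = \zeta_{>|<}$; granting this, $\Psi$ is a morphism of combinatorial LC-Hopf algebras and so coincides with the unique such morphism from Theorem~\ref{abs-thm}.

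Because $\zetaq \circ \Psi$ and $\zeta_{>|<}$ are both continuous and linear, I would verify their equality on the pseudobasis, i.e.\ prove $\zeta_{>|<}([w]^{(\beta)}_\m) = \zetaq(K^{(\beta)}_{\cD(w)})$ for each $w \in \Sm_\infty$. The left-hand side is given by the displayed formula preceding the theorem: it equals $1$ when $\ell(w) = 0$, equals $t^n(2+\beta t)$ with $n = \ell(w)$ when $\PeakSet(w) = \varnothing$, and vanishes otherwise. Since $\zetaq$ specializes $x_1 = t$ and $x_j = 0$ for $j > 1$, on the right only tuples $S = (S_1 \preceq \dots \preceq S_N)$ with every $S_i \subseteq \{1',1\}$ contribute, each with weight $\beta^{|S|-N} t^{|S|}$.

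The one genuinely combinatorial step, and the main obstacle, is thus the evaluation of $\zetaq(K^{(\beta)}_{\cD(w)})$ via a short analysis of such chains. When $\PeakSet(w) = \varnothing$ one has $\ell(\cD(w)) \leq 1$, so $\cD(w) = (n)$ with $I(\cD(w)) = \varnothing$; condition \eqref{peak-def-eq1} then only forbids $1'$ from lying in two consecutive sets, and a direct enumeration leaves exactly two admissible chains of weight $t^n$ (an optional leading $\{1'\}$) together with the single chain containing one copy of $\{1',1\}$, of weight $\beta t^{n+1}$, summing to $t^n(2+\beta t)$. When $\PeakSet(w) \neq \varnothing$ one has $\ell(\cD(w)) \geq 2$, and I would argue no admissible chain exists: for the smallest index $i_0 \in I(\cD(w))$, which is the first part of $\cD(w)$ and hence at least $2$, condition \eqref{peak-def-eq1} at $i_0$ forces $1 \notin S_{i_0}$ and hence $S_{i_0} = \{1'\}$, the chain condition then forces $S_{i_0-1} = \{1'\}$, and now $1' \in S_{i_0-1}\cap S_{i_0}$ contradicts \eqref{peak-def-eq1} at $i_0-1 \notin I(\cD(w))$. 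As each weight $\beta^{|S|-N}t^{|S|}$ is a monomial with positive coefficient there is no cancellation, so the sum is $0$, matching the left-hand side and closing the argument.
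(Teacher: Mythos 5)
Your proposal is correct and follows essentially the same route as the paper's own proof: you factor the map as $\ttheta \circ \Phi_<$ using Theorem~\ref{tl-thm}, \eqref{ttheta-eq}, and \eqref{peakset-eq}, then reduce to the same evaluation $\zetaq(K^{(\beta)}_\alpha) = t^{|\alpha|}(2+\beta t)$ or $0$ according to whether $I(\alpha)=\varnothing$, enumerating the same three admissible chains in the first case and deriving the same contradiction at the smallest peak index in the second. Your treatment of the case $\PeakSet(w)\neq\varnothing$ is slightly more detailed than the paper's one-line argument, but it is the same idea, so nothing further is needed.
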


\begin{proof}
If $w \in \Sm_\infty$ then $\PeakSet(w) =  \{ i \in \Des(w) : 0 < i-1 \notin \Des(w)\}$ and  $\cD(w) = \Lambda(\cC(w))$ since
 $I( \Lambda(\cC(w))) = \{ i \in I(\cC(w)) : 0 < i-1 \notin I(\cC(w))\} =  \{ i \in \Des(w) : 0 < i-1 \notin \Des(w)\} .$
Our map $\Psi : \mMPR \to \mQSym$ is thus the composition of 
$\Phi : (\mMPR,\zeta_{<}) \to (\mQSym, \zetaq)$ from Theorem~\ref{tl-thm} and $\ttheta : \mQSym \to \mQSymQ$,
so $\Psi$ is at least a morphism of LC-Hopf algebras. 

It remains to check that $\zeta_{>|<} = \zetaq\circ \Psi$. For this,  it 
suffices to show that if $\alpha\vDash N$ is a peak composition then
  $ \zetaq(K^{(\beta)}_\alpha) = t^{|\alpha|}(2+\beta t)$ if $I(\alpha) = \varnothing$ and $ \zetaq(K^{(\beta)}_\alpha) =0$ otherwise.
Recall that $\zetaq$ corresponds to setting $x_1=t$ and $x_i=0$ for $i>1$.
Thus
$
\zetaq(K^{(\beta)}_\alpha)  = \sum_{S} \beta^{|S|-N} t^{|S|}
$ where the sum is over all weakly increasing $N$-tuples of sets $S= (S_1\preceq S_2 \preceq \cdots \preceq S_N)$ with $\varnothing\neq S_i \subseteq \{1'<1\}$,  $S_i \cap S_{i+1} \subseteq \{1'\}$ for $i \in I(\alpha)$, and $S_i \cap S_{i+1} \subseteq \{1\}$ for $i \notin I(\alpha)$. 

If $I(\alpha) \neq \varnothing$,  then $\alpha_1 \geq 2$ since $\alpha$ is a peak composition, so there are no such tuples $S$ since we must have $S_{i-1} \cap S_i \subset \{1\}$ and $S_i \cap S_{i+1} \subset \{1'\}$ for $i \in I(\alpha)$. Thus $\zetaq(K^{(\beta)}_\alpha) = 0$ if $I(\alpha) \neq \varnothing $ as claimed.
On the other hand, if $I(\alpha) = \varnothing$, then we have $S_i \cap S_{i+1} \subseteq \{1\}$ for all $i \in [N-1]$, so there
are only three possibilities for $S$, given by 
$( \{1\}, \{1\},\dots,\{1\})$, $( \{1'\}, \{1\},\dots,\{1\})$, and  $( \{1',1\}, \{1\},\dots,\{1\})$,
so we have $ \zetaq(K^{(\beta)}_\alpha) = t^{|\alpha|}(2+\beta t)$ as needed.
\end{proof}

At this point it is useful to describe the product and coproduct in $\mMPR$ more explicitly.
Recall the definition of $\leq_\m$ from Section~\ref{wqsym-sect}.
Given a word $w$ and a set $S$, define $w\cap S$ to be the subword of $w$ formed by omitting all letters not in $ S$.
Then, as explained in \cite[\S4]{LamPyl}, one has
\be\label{mMPR-prod-eq}
 \nabla([w']^{(\beta)}_\m\otimes [w'']^{(\beta)}_\m) = \sum_w  \beta^{\ell(w) - \ell(w')-\ell(w'')}[w]^{(\beta)}_\m
\qquad\text{for $w' \in \Sm_m$ and $w'' \in \Sm_n$}\ee
where the sum is over all $w \in \Sm_{m+n}$ such that $w' \leq_\m w \cap [m] $ and $w'' \uparrow m \leq_\m  w \cap (m+[n])$.
Similarly  if
we
fix $w =w_1w_2\cdots w_n \in \Sm_\infty$ and define $\llbracket w\rrbracket ^{(\beta)}_\m := [\st(w)]^{(\beta)}_\m$,
then
\be\label{mMPR-coprod-eq}
\Delta([w]^{(\beta)}_\m) = \sum_{i=0}^n  \llbracket w_1\cdots w_i\rrbracket ^{(\beta)}_\m  \otimes \llbracket w_{i+1}\cdots w_n\rrbracket ^{(\beta)}_\m
+ \beta \sum_{i=1}^n \llbracket w_1\cdots w_i\rrbracket ^{(\beta)}_\m  \otimes \llbracket w_i\cdots w_n\rrbracket ^{(\beta)}_\m.
\ee
These formulas follow directly from the definitions of $\mWQSym$ and $[w]^{(\beta)}_\m$.
Using Theorem~\ref{form2-thm}, one can translate these identities by duality to product and coproduct formulas
for $\MMPR$; see \cite[\S7.1]{LamPyl}.
On the other hand, invoking Theorem~\ref{peak-tl-thm} leads to the following formulas for $\mQSymQ$:

\begin{proposition}\label{K-prod-prop}
Suppose $\alpha' $ and $\alpha'' $ are peak compositions. Choose any
 $w' ,w'' \in \Sm_\infty$ with $\cD(w') = \alpha'$ and $\cD(w'') =\alpha''$,
 and set $m=\max(\{0\} \cup w')$ and $n=\max(\{0\} \cup w'')$.
  Then
\[
K^{(\beta)}_{\alpha'} K^{(\beta)}_{\alpha''} = \sum_w \beta^{\ell(w) - |\alpha'| - |\alpha''|} K^{(\beta)}_{\cD(w)}
\]
where the sum is over all $w \in \Sm_{m+n}$ such that $w' \leq_\m w \cap [m]$ and $w'' \uparrow m \leq_\m w \cap (m + [n])$.
Additionally, if $\alpha$ is a peak composition and $w=w_1w_2\cdots w_\ell \in\Sm_\infty$ has  
$\cD(w) = \alpha$, then
\[
\Delta(K^{(\beta)}_{\alpha}) = \sum_{i=0}^\ell K^{(\beta)}_{\cD(w_1\cdots w_i)} \otimes K^{(\beta)}_{\cD(w_{i+1}\cdots w_\ell)}
+
\beta \sum_{i=1}^\ell K^{(\beta)}_{\cD(w_1\cdots w_i)} \otimes K^{(\beta)}_{\cD(w_{i}\cdots w_\ell)}.
\]
\end{proposition}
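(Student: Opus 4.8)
The plan is to deduce both identities directly from Theorem~\ref{peak-tl-thm}, which supplies the morphism of combinatorial LC-Hopf algebras $\Psi : \mMPR \to \mQSym$ determined by $\Psi([w]^{(\beta)}_\m) = K^{(\beta)}_{\cD(w)}$ for $w \in \Sm_\infty$. Because $\Psi$ is in particular an algebra morphism and a coalgebra morphism, it intertwines the structure maps on the two sides, so it suffices to apply $\Psi$ to the explicit formulas \eqref{mMPR-prod-eq} and \eqref{mMPR-coprod-eq} for $\mMPR$ and then simplify. Two elementary bookkeeping facts will be used repeatedly: since $\cD(v)$ is by definition the peak composition of $\ell(v)$ with peak set $\PeakSet(v)$, one has $|\cD(v)| = \ell(v)$; and since $\st$ is order-preserving it preserves every inequality defining $\PeakSet$ as well as word length, so $\cD(\st(v)) = \cD(v)$.

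For the product, I first note that $\ell(w') = |\cD(w')| = |\alpha'|$ and likewise $\ell(w'') = |\alpha''|$, and that the parameters $m=\max(\{0\}\cup w')$ and $n=\max(\{0\}\cup w'')$ in the statement are exactly the maxima appearing in \eqref{mMPR-prod-eq}. Applying the algebra morphism $\Psi$ to that identity would then give
\[
K^{(\beta)}_{\alpha'} K^{(\beta)}_{\alpha''} = \Psi\bigl(\nabla([w']^{(\beta)}_\m \otimes [w'']^{(\beta)}_\m)\bigr) = \sum_w \beta^{\ell(w) - |\alpha'| - |\alpha''|} K^{(\beta)}_{\cD(w)},
\]
with the sum ranging over exactly the $w \in \Sm_{m+n}$ specified in \eqref{mMPR-prod-eq}. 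A pleasant consequence, rather than an assumption, is that the right-hand side is independent of the chosen representatives $w', w''$, since the left-hand side plainly is.

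For the coproduct, the point is that $\Psi(\llbracket v\rrbracket^{(\beta)}_\m) = \Psi([\st(v)]^{(\beta)}_\m) = K^{(\beta)}_{\cD(\st(v))} = K^{(\beta)}_{\cD(v)}$ by the standardization invariance of $\cD$. Applying the coalgebra morphism $\Psi \htimes \Psi$ to \eqref{mMPR-coprod-eq} for a word $w = w_1\cdots w_\ell \in \Sm_\infty$ with $\cD(w) = \alpha$, and evaluating it on each tensor factor via this observation, yields precisely the claimed expansion of $\Delta(K^{(\beta)}_\alpha)$, with the two sums coming from the main term and the $\beta$-term of \eqref{mMPR-coprod-eq} respectively.

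There is no substantive obstacle here; the argument is entirely mechanical once $\Psi$ is in hand. The only points requiring a word of care are the two identities $|\cD(v)| = \ell(v)$ and $\cD(\st(v)) = \cD(v)$, and the observation that each contiguous subword $w_1\cdots w_i$, $w_{i+1}\cdots w_\ell$, and $w_i\cdots w_\ell$ appearing in \eqref{mMPR-coprod-eq} inherits the no-equal-adjacent-letters property from $w$, hence standardizes into $\Sm_\infty$ so that $\Psi$ may legitimately be applied termwise. I expect this last well-definedness check to be the mildest source of friction, and it is immediate.
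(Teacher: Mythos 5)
Your proposal is correct and is precisely the paper's argument: the paper's proof consists of the single line ``Apply the morphism in Theorem~\ref{peak-tl-thm} to both sides of \eqref{mMPR-prod-eq} and \eqref{mMPR-coprod-eq}.'' The additional checks you spell out ($|\cD(v)|=\ell(v)$, $\cD(\st(v))=\cD(v)$, and that the contiguous subwords standardize into $\Sm_\infty$) are exactly the routine verifications the paper leaves implicit.
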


\begin{proof}
Apply the morphism in Theorem~\ref{peak-tl-thm} to both sides of  \eqref{mMPR-prod-eq} and \eqref{mMPR-coprod-eq}.
\end{proof}


In principle one can also
 compute products and coproducts in the pseudobasis $\{ \bK^{(\beta)}_\alpha\}$
by combining the formulas in Proposition~\ref{K-prod-prop} with the change-of-basis identities in \eqref{417-eq}.

\begin{example}\label{K-prod-ex}
If $\alpha' = \alpha'' = (1)$ then $K^{(\beta)}_{\alpha'} K^{(\beta)}_{\alpha''}
 $
is the sum $ \sum_{w} \beta^{\ell(w) - 2} K^{(\beta)}_{\cD(w)}$ over all words  $w\in \{ 12, 21, 121, 212, 1212, 2121, 12121,21212, \dots\},$ 
so there is an infinite product expansion 
\[K^{(\beta)}_{(1)} K^{(\beta)}_{(1)} 
=
 2 K^{(\beta)}_{(2)} + \beta K^{(\beta)}_{(2,1)} + \beta K^{(\beta)}_{(3)}+ \beta ^2 K^{(\beta)}_{(2,2)}+ \beta^2 K^{(\beta)}_{(3,1)}+ 
 \beta^3 K^{(\beta)}_{(2,2,1)}+\beta^3 K^{(\beta)}_{(3,2)} + \dots .
\] However, there is a finite coproduct expansion
\[ \Delta(K^{(\beta)}_{(2)}) = \Delta(K^{(\beta)}_{\cD(12)}) =
1 \otimes K^{(\beta)}_{(2)} + K^{(\beta)}_{(1)}\otimes K^{(\beta)}_{(1)}
+ 1\otimes K^{(\beta)}_{(2)}
+
\beta\(  K^{(\beta)}_{(1)}\otimes K^{(\beta)}_{(2)}
+
  K^{(\beta)}_{(2)}\otimes K^{(\beta)}_{(1)}\).
 \]
\end{example}

\subsection{Multipeak noncommutative symmetric functions}\label{MPeak-sect}

We now consider the duals of $\mQSymQ$ and $\mQSymP$.
Fix a set composition $A = A_1 A_2\cdots A_m \in \SetComp_n$,
so that the union of the blocks of $A$ is $[n]$.
Recall that $i \in [n-1]$ belongs to $\Des(A)$
if and only if the block of $A$ containing $i$ is after the block containing $i+1$. 

The \defn{peak set} of $A$ is
$\PeakSet(A) := \{1<i<n: i -1 \notin \Des(A)\text{ and } i \in \Des(A) \}.$
If $A $ belongs to $ \SM_n$ (so that none of its blocks contain consecutive integers)
 then one has $i \in \PeakSet(A)$ precisely when $1< i <n$ and the block of $A$ containing $i$ is after the 
blocks containing $i-1$ and $i+1$.
Even when $A \notin \SM_n$, the set $\PeakSet(A)$ is always equal to $ I(\alpha)$ for some peak composition $\alpha \vDash n$.

\begin{definition}
For a peak composition $\alpha \vDash n$,
let
\[\tpeakR_\alpha := \sum_{\substack{A \in \SM_n \\  \PeakSet(A) =I(\alpha)}} [A]^{(\beta)}_\M  \in \MMPR.
\]
Then define $\MPeakP$ to be the free $\ZZ[\beta]$-module 
with $\left\{\tpeakR_\alpha : \alpha\text{ is a peak composition}\right\}$ as a basis.
\end{definition}

It also holds that $\tpeakR_\alpha =  \sum_{\Lambda(\gamma) =\alpha} \tR_\gamma$
where the sum is over $\gamma \vDash |\alpha|$, so $\MPeakP\subseteq\MNSym$.
Define
$ [\cdot,\cdot ] : \MPeakP \times \mQSymQ \to \ZZ[\beta] $
to be the nondegenerate bilinear form, continuous in the second coordinate, that has 
$[ \tpeakR_\alpha, K^{(\beta)}_\gamma] = \delta_{\alpha\gamma}$
for all peak compositions $\alpha$ and $\gamma$.
Below, let $\langle \cdot,\cdot\rangle : \MNSym \times \mQSym \to \ZZ[\beta]$
be as in Theorem~\ref{form3-thm} 
and recall the definition of $\ttheta $ from \eqref{ttheta-eq}.

\begin{lemma}\label{[]-lem}
If $f \in \MPeakP$ and $g \in \mQSym$
then $\left[ f, \ttheta(g)\right] = \langle f,g\rangle$. 
\end{lemma}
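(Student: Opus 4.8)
The plan is to verify the identity on a spanning set and then propagate it by bilinearity and continuity. Since $\MPeakP$ is free with basis $\{\tpeakR_\alpha\}$ indexed by peak compositions, and every element of $\MPeakP$ is a finite $\ZZ[\beta]$-linear combination of these, linearity in the first argument reduces us to the case $f = \tpeakR_\alpha$. For the second argument, I would observe that for fixed $f$ both maps $g \mapsto [f, \ttheta(g)]$ and $g \mapsto \langle f, g\rangle$ are continuous $\ZZ[\beta]$-linear functionals $\mQSym \to \ZZ[\beta]$: the first is the composition of the continuous map $\ttheta$ with the form $[f, \cdot]$, which is continuous in its second coordinate, while the second is continuous in its second coordinate by hypothesis. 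Because $\{L^{(\beta)}_\gamma\}$ is a pseudobasis for $\mQSym$, two continuous functionals agreeing on all $L^{(\beta)}_\gamma$ must coincide, so it suffices to check the case $g = L^{(\beta)}_\gamma$.

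With $f = \tpeakR_\alpha$ and $g = L^{(\beta)}_\gamma$, I would compute each side directly. On the left, using $\ttheta(L^{(\beta)}_\gamma) = K^{(\beta)}_{\Lambda(\gamma)}$ from \eqref{ttheta-eq} together with the defining property $[\tpeakR_\alpha, K^{(\beta)}_\delta] = \delta_{\alpha\delta}$ of the form, one obtains
\[
\left[\tpeakR_\alpha, \ttheta(L^{(\beta)}_\gamma)\right] = \left[\tpeakR_\alpha, K^{(\beta)}_{\Lambda(\gamma)}\right] = \delta_{\alpha, \Lambda(\gamma)}.
\]
On the right, I would expand $f$ in the $\tR$-basis via the identity $\tpeakR_\alpha = \sum_{\Lambda(\eta) = \alpha} \tR_\eta$ recorded just above (the sum over $\eta \vDash |\alpha|$), and then apply $\langle \tR_\eta, L^{(\beta)}_\gamma\rangle = \delta_{\eta\gamma}$ from Theorem~\ref{form3-thm}:
\[
\langle \tpeakR_\alpha, L^{(\beta)}_\gamma\rangle = \sum_{\Lambda(\eta) = \alpha} \langle \tR_\eta, L^{(\beta)}_\gamma\rangle = \sum_{\Lambda(\eta) = \alpha} \delta_{\eta\gamma}.
\]
The last sum is $1$ exactly when $\gamma$ appears in the index set, i.e.\ when $\Lambda(\gamma) = \alpha$ (note $|\gamma| = |\alpha|$ is then automatic, as $\Lambda$ preserves size), and $0$ otherwise; hence it equals $\delta_{\alpha, \Lambda(\gamma)}$, matching the left-hand side.

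The computation itself is routine combinatorial bookkeeping driven by the two dual-basis relations and the identity $\tpeakR_\alpha = \sum_{\Lambda(\eta) = \alpha} \tR_\eta$; the only point requiring genuine care is the continuity reduction in the second argument, since $g$ may be an infinite topological combination of the $L^{(\beta)}_\gamma$. I expect this to be the main thing to get right: one must confirm that $g \mapsto [f, \ttheta(g)]$ is honestly continuous, so that it is determined by its values on the pseudobasis. This follows because $\ttheta$ is a morphism of LC-Hopf algebras (hence continuous) and $[f, \cdot]$ is continuous by the definition of the form, so their composite is continuous and the pseudobasis-level check propagates to all of $\mQSym$.
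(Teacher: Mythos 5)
Your proposal is correct and follows essentially the same route as the paper, which also reduces to $f = \tpeakR_\alpha$ and $g = L^{(\beta)}_\gamma$ and then compares the definitions of $\ttheta$ and $\tpeakR_\alpha$; you have merely made explicit the continuity justification for the reduction and the computation of both sides as $\delta_{\alpha,\Lambda(\gamma)}$ via $\tpeakR_\alpha = \sum_{\Lambda(\eta)=\alpha}\tR_\eta$, which the paper leaves as ``clear.'' All the ingredients you invoke ($\ttheta(L^{(\beta)}_\gamma)=K^{(\beta)}_{\Lambda(\gamma)}$, the dual-basis relations, and continuity of $\ttheta$ and of both forms in the second coordinate) are exactly those used implicitly in the paper's argument.
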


\begin{proof}
We may assume that $f = \tpeakR_\alpha$ and $g = L^{(\beta)}_\gamma$
 for a peak composition $\alpha$ and a composition $\gamma$. 
Then the desired identity is clear by comparing the definitions of $\ttheta$ and $\tpeakR_\alpha$.
\end{proof}

\begin{theorem}\label{mpeakp-thm}
The module $\MPeakP$ is a Hopf subalgebra of $\MNSym$
and is the Hopf algebra dual to $\mQSymQ$ via   $[\cdot,\cdot]$.
The continuous linear map 
$\mMPR \to \mQSymQ$ 
from Theorem~\ref{peak-tl-thm}
sending
$[w]^{(\beta)}_\m \mapsto K^{(\beta)}_{\cD(w)}$  for all $w \in \Sm_\infty$
is the morphism adjoint to 
the inclusion
$\MPeakP \hookrightarrow \MMPR$.
\end{theorem}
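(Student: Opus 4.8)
The plan is to deduce the whole statement formally, by dualizing, from two inputs already in hand: the fact (from \cite{LM2019}) that $\ttheta : \mQSym \to \mQSymQ$ is a surjective morphism of LC-Hopf algebras, and Lemma~\ref{[]-lem}. First I would note that the defining property $[\tpeakR_\alpha, K^{(\beta)}_\gamma] = \delta_{\alpha\gamma}$ makes $[\cdot,\cdot]$ a nondegenerate pairing exhibiting the LC-module $\mQSymQ$ (with pseudobasis $\{K^{(\beta)}_\alpha\}$) as the dual of the free $\ZZ[\beta]$-module $\MPeakP$ (with basis $\{\tpeakR_\alpha\}$). Since $\mQSymQ$ is an LC-Hopf algebra, the general duality framework of Section~\ref{completions-sect} equips its module dual $\MPeakP$ with a dual Hopf-algebra structure; it then only remains to identify this structure with the one restricted from $\MNSym$.

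Because $\ttheta$ is continuous, it is the adjoint of a unique linear map $\phi : \MPeakP \to \MNSym$ characterized by $\langle \phi(f), g\rangle = [f, \ttheta(g)]$ for $f \in \MPeakP$ and $g \in \mQSym$. Lemma~\ref{[]-lem} gives $[f,\ttheta(g)] = \langle f, g\rangle$, and nondegeneracy of $\langle\cdot,\cdot\rangle$ then forces $\phi(f)=f$; that is, $\phi$ is the inclusion $\MPeakP \hookrightarrow \MNSym$. Invoking the duality between Hopf algebras and LC-Hopf algebras, the adjoint of the LC-Hopf morphism $\ttheta$ is a morphism of Hopf algebras, so $\phi$ is an injective morphism of Hopf algebras from $\MPeakP$ (with its structure dual to $\mQSymQ$) into $\MNSym$. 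As an injective bialgebra map, $\phi$ forces $\MPeakP$ to be closed under the product and coproduct of $\MNSym$, with the two structures agreeing; since both sides are Hopf algebras, it also commutes with antipodes. Hence $\MPeakP$ is a Hopf subalgebra of $\MNSym$ whose Hopf structure is dual to that of $\mQSymQ$ via $[\cdot,\cdot]$, which is the content of the first two assertions.

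For the last assertion I would compose adjoints. By Theorem~\ref{form3-thm}, the map $\Phi : \mMPR \to \mQSym$ with $[w]^{(\beta)}_\m \mapsto L^{(\beta)}_{\cC(w)}$ is adjoint to the inclusion $\MNSym \hookrightarrow \MMPR$, and I have just shown that $\ttheta$ is adjoint to $\MPeakP \hookrightarrow \MNSym$. The map of Theorem~\ref{peak-tl-thm} is the composition $\Psi = \ttheta \circ \Phi$, so for $f \in \MPeakP$ and $b \in \mMPR$ a direct chase gives $[f, \Psi(b)] = [f, \ttheta(\Phi(b))] = \langle f, \Phi(b)\rangle = \langle f, b\rangle$, where the final pairing is the form on $\MMPR \times \mMPR$ of Theorem~\ref{form2-thm} with $f$ viewed inside $\MMPR$. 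Therefore $\Psi$ is adjoint to the inclusion $\MPeakP \hookrightarrow \MMPR$, as claimed.

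The computations are entirely formal, so the one place requiring care—and where the substantive work actually sits—is the interface with the general duality machinery: one must be certain that the adjoint of a morphism of LC-Hopf algebras is genuinely a morphism of Hopf algebras, not merely of modules, so that injectivity of $\phi$ upgrades $\MPeakP$ from a submodule to a sub-Hopf-algebra and pins its (co)product down as the restriction from $\MNSym$. Everything of genuine combinatorial content has been pushed into the cited fact that $\ttheta$ is an LC-Hopf morphism and into the verification of Lemma~\ref{[]-lem}.
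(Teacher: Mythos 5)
Your proof is correct, and it rests on the same two pillars as the paper's own argument---Lemma~\ref{[]-lem} and the cited fact that $\ttheta$ is a surjective morphism of LC-Hopf algebras---but it is organized differently in one genuine respect. The paper establishes that $\MPeakP$ is a Hopf subalgebra by a different mechanism: relative to the $\MMPR\times\mMPR$ pairing of Theorem~\ref{form2-thm}, $\MPeakP$ is the orthogonal complement of the kernel of the LC-Hopf morphism $\Psi:\mMPR\to\mQSymQ$ from Theorem~\ref{peak-tl-thm}, and orthogonal complements of such kernels are Hopf subalgebras; duality is then checked separately from Lemma~\ref{[]-lem} together with Theorem~\ref{form3-thm}, and the adjoint claim is verified by the chain $\langle\tpeakR_\alpha,[w]^{(\beta)}_\m\rangle=\langle\tpeakR_\alpha,L^{(\beta)}_{\cC(w)}\rangle=[\tpeakR_\alpha,\ttheta(L^{(\beta)}_{\cC(w)})]=[\tpeakR_\alpha,K^{(\beta)}_{\cD(w)}]$. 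You instead dualize $\ttheta$ directly: showing its adjoint is the inclusion $\MPeakP\hookrightarrow\MNSym$ (which is exactly Lemma~\ref{[]-lem} plus nondegeneracy of $\langle\cdot,\cdot\rangle$) delivers the Hopf-subalgebra statement and the duality with $\mQSymQ$ in a single stroke, bypassing the orthogonal-complement step, and your compose-adjoints chase for the last assertion is the paper's displayed chain recast, using the factorization $\Psi=\ttheta\circ\Phi$ recorded in the proof of Theorem~\ref{peak-tl-thm}. What your route buys is economy and transparency about where each duality enters; what it costs is heavier explicit reliance on the formal machinery of Section~\ref{completions-sect}---namely that $\mQSymQ$ is an LC-Hopf algebra relative to $[\cdot,\cdot]$ (so that its module dual carries a Hopf structure at all, via \cite[Thm.~4.19]{LM2019}) and that adjoints of LC-Hopf morphisms are Hopf morphisms---facts the paper also uses, but more implicitly. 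Two details worth stating if you write this up: the containment $\MPeakP\subseteq\MNSym$ of modules, needed before $\langle f,g\rangle$ even makes sense for $f\in\MPeakP$, comes from the identity $\tpeakR_\alpha=\sum_{\Lambda(\gamma)=\alpha}\tR_\gamma$ noted just before the theorem; and closure of $\MPeakP$ under the coproduct of $\MNSym$ follows from the morphism equation $\Delta_{\MNSym}(\phi(f))=(\phi\otimes\phi)(\Delta(f))$, whose right side lies in $\MPeakP\otimes\MPeakP$ viewed inside $\MNSym\otimes\MNSym$ (here one may note $\MPeakP$ is a free direct summand of $\MNSym$, by the triangularity of $\tpeakR_\alpha$ in the $\tR_\gamma$ basis).
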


\begin{proof}
Relative to the form in Theorem~\ref{form2-thm},
the set $\MPeakP$ is the orthogonal complement
of the kernel of the LC-Hopf algebra morphism 
$\mMPR \to \mQSymQ$ described in Theorem~\ref{peak-tl-thm}.
Therefore $\MPeakP$ is a Hopf subalgebra.
Lemma~\ref{[]-lem}, in view of Theorem~\ref{form3-thm}, implies
that the nondegenerate form $[\cdot,\cdot]$ respects the (co)product and (co)unit maps of 
$\MPeakP$ and $\mQSymQ$, so
$\MPeakP$  dual to $\mQSymQ$.
For the last assertion, 
we note that if $\alpha$ is a peak composition and $w \in \Sm_\infty$
then  
\[\langle\tpeakR_\alpha, [w]^{(\beta)}_\m\rangle
=\langle 
  \tpeakR_\alpha, L^{(\beta)}_{\cC(w)} \rangle 
    =[ 
  \tpeakR_\alpha, \ttheta(L^{(\beta)}_{\cC(w)} )]    
   =[ 
  \tpeakR_\alpha, K^{(\beta)}_{\cD(w)} ] 
  \]
  by Theorem~\ref{form3-thm} for the first equality,  
   Lemma~\ref{[]-lem} for the second, and \eqref{peakset-eq} for the third.
\end{proof}

We call $\MPeakP$ the \defn{multi-peak algebra}. 
This is a generalization of the \defn{peak algebra} carefully studied in \cite{Schocker} (see also \cite{AguiarBergeronNyman,AguiarNymanOrellana,BergeronHivertThibon,JingLi,Nyman}),
which coincides with $\MPeakP$ when $\beta=0$.

We can compute a product formula for the $\tpeakR_\alpha$-basis of $\MPeakP$.
Suppose
$\alpha$ and $\gamma$ are nonempty peak compositions of length $m$ and $n$.
Define 
$\alpha \vartriangleleft \gamma:= \alpha\gamma$ and 
\[
\ba
\alpha \vartriangleright \gamma &:=  (\alpha_1,\dots,\alpha_{m-1},\alpha_m+ \gamma_1,\gamma_2,\dots,\gamma_n),
\\
\alpha \circ \gamma &:=  (\alpha_1,\dots,\alpha_{m-1},\alpha_m+ \gamma_1-1,\gamma_2,\dots,\gamma_n).
\ea
\]
Additionally let
\be\ba
\alpha \blacktriangleright \gamma &:= \alpha \vartriangleright (1, \gamma_1-1,\gamma_2,\dots,\gamma_n) =  (\alpha_1,\dots,\alpha_{m-1},\alpha_m+1, \gamma_1-1,\gamma_2,\dots,\gamma_n),
\\
\alpha\bullet \gamma &:=  \alpha \circ (1, \gamma_1-1,\gamma_2,\dots,\gamma_n)  = (\alpha_1,\dots, \alpha_m, \gamma_1-1,\gamma_2,\dots,\gamma_n).
\ea\ee
If $n=1$ then $\alpha \blacktriangleright \gamma$ and $\alpha\bullet \gamma $ could be integer sequences ending in zero,
which we do not consider to be peak compositions.
We define $\tpeakR_\alpha := 0$ if $\alpha$ is not a peak composition.
The following result is a shifted analogue of the product formula \cite[Prop.~8.1]{LamPyl}
for the $\tR_{\alpha}$-basis of $\MNSym$.

\begin{proposition}\label{MPeakP-prod-prop}
Suppose $\alpha$ and $\gamma$ are nonempty peak compositions. Then
\[\tpeakR_{\alpha}  \tpeakR_{\gamma}
=  \tpeakR_{\alpha\blacktriangleright \gamma} + \tpeakR_{\alpha\vartriangleright \gamma} + \tpeakR_{\alpha \vartriangleleft \gamma} 
+ \beta\cdot \tpeakR_{\alpha \circ \gamma} +  \beta\cdot  \tpeakR_{\alpha \bullet \gamma}.
\]
\end{proposition}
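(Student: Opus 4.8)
The plan is to prove the formula by duality, transferring the computation to the coproduct of $\mQSymQ$. By Theorem~\ref{mpeakp-thm}, $\MPeakP$ is the Hopf algebra dual to $\mQSymQ$ via the nondegenerate pairing $[\cdot,\cdot]$ with $[\tpeakR_\alpha, K^{(\beta)}_\gamma] = \delta_{\alpha\gamma}$, so the product on $\MPeakP$ is adjoint to the coproduct on $\mQSymQ$. Concretely, for every peak composition $\delta$ the coefficient of $\tpeakR_\delta$ in $\tpeakR_\alpha \tpeakR_\gamma$ equals $[\tpeakR_\alpha \otimes \tpeakR_\gamma, \Delta(K^{(\beta)}_\delta)]$, i.e.\ the coefficient of $K^{(\beta)}_\alpha \otimes K^{(\beta)}_\gamma$ in $\Delta(K^{(\beta)}_\delta)$. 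Thus it suffices to identify this coefficient for each $\delta$ and check it equals $1$ when $\delta \in \{\alpha\blacktriangleright\gamma, \alpha\vartriangleright\gamma, \alpha\vartriangleleft\gamma\}$, equals $\beta$ when $\delta \in \{\alpha\circ\gamma, \alpha\bullet\gamma\}$, and vanishes otherwise, with the convention $\tpeakR_\delta = 0$ for non-peak $\delta$ absorbing the degenerate cases.

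I would then compute $\Delta(K^{(\beta)}_\delta)$ using Proposition~\ref{K-prod-prop}: fixing any $w = w_1\cdots w_\ell \in \Sm_\infty$ with $\cD(w) = \delta$ (so $\ell = |\delta|$ and $\PeakSet(w) = I(\delta)$), the sought coefficient is the number of \emph{plain} cuts $i$ with $\cD(w_1\cdots w_i) = \alpha$ and $\cD(w_{i+1}\cdots w_\ell) = \gamma$, plus $\beta$ times the number of \emph{repeated} cuts $i$ with $\cD(w_1\cdots w_i) = \alpha$ and $\cD(w_i\cdots w_\ell) = \gamma$. The central bookkeeping fact is elementary: for any word and any cut, the peak set of the prefix is $\PeakSet(w)\cap\{2,\dots,i-1\}$ and the peak set of the suffix is the shift of $\PeakSet(w)\cap\{i+2,\dots,\ell-1\}$ in the plain case and of $\PeakSet(w)\cap\{i+1,\dots,\ell-1\}$ in the repeated case, so the boundary positions ($i$ and $i+1$ for a plain cut, $i$ alone for a repeated cut) are left free by the prefix and suffix peak data. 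Since $\cD(w_1\cdots w_i)$ is a peak composition of $i$, matching $\alpha$ forces $i = |\alpha|$; hence there is at most one plain and one repeated cut, each contributing a coefficient of at most $1$, and the two types force $|\delta| = |\alpha|+|\gamma|$ and $|\delta| = |\alpha|+|\gamma|-1$ respectively.

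It remains to read off which $\delta$ occur. For a plain cut at $i = |\alpha|$, the constraints $\PeakSet(w)\cap\{2,\dots,i-1\} = I(\alpha)$ and $\PeakSet(w)\cap\{i+2,\dots,\ell-1\} = i + I(\gamma)$ leave only the choice of whether $i$ or $i+1$ lies in $\PeakSet(w)$; writing $\PeakSet(w) = I(\alpha)\sqcup J \sqcup (i + I(\gamma))$, the three sparse choices $J \in \{\varnothing, \{i\}, \{i+1\}\}$ give $I(\delta)$ equal to the peak sets of $\alpha\vartriangleright\gamma$, $\alpha\vartriangleleft\gamma$, and $\alpha\blacktriangleright\gamma$, respectively. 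For a repeated cut at $i = |\alpha|$ only position $i$ is free, and the two choices $J \in \{\varnothing, \{i\}\}$ give $\alpha\circ\gamma$ and $\alpha\bullet\gamma$. I would verify each of these five identifications by a direct partial-sum computation from $I(\delta) = \{\delta_1, \delta_1+\delta_2, \dots\}$, confirming that the resulting compositions are pairwise distinct (they differ in length or in the value of the part at position $m$), so no two choices collapse to a single $\delta$ and inflate its coefficient.

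The main obstacle I anticipate is aligning the sparsity conditions on $\PeakSet(w)$ with the peak-composition conditions on the target $\delta$, especially in the boundary cases $\alpha_m = 1$, $\gamma_1 = 2$, and $\ell(\gamma) = 1$. In precisely these situations a choice of $J$ either destroys the sparsity of $\PeakSet(w)$ (so that no word realizes it, e.g.\ $J = \{i\}$ requires $\alpha_m \geq 2$ and $J = \{i+1\}$ requires $\gamma_1 \geq 3$) or produces a sequence ending in $0$; I must check that each such failure corresponds exactly to the target $\alpha\vartriangleleft\gamma$, $\alpha\blacktriangleright\gamma$, $\alpha\circ\gamma$, or $\alpha\bullet\gamma$ failing to be a peak composition, so that the vanishing is consistent with the convention $\tpeakR_\delta = 0$. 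Establishing that this correspondence between forbidden choices of $J$ and non-peak targets is exact is the crux of the argument; everything else reduces to the routine peak-set accounting sketched above.
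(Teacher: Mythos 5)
Your proposal is correct and takes essentially the same route as the paper's proof: both dualize via Theorem~\ref{mpeakp-thm}, evaluate $[\tpeakR_{\alpha}\tpeakR_{\gamma}, K^{(\beta)}_{\cD(w)}]$ using the cut-based coproduct formula of Proposition~\ref{K-prod-prop}, and then classify the plain and repeated cuts at $i=|\alpha|$ according to whether the free boundary positions lie in $\PeakSet(w)$, producing exactly the five operations $\vartriangleleft$, $\blacktriangleright$, $\vartriangleright$, $\circ$, $\bullet$. Your explicit bookkeeping of the degenerate cases ($\alpha_m=1$, $\gamma_1=2$, etc.) via the convention $\tpeakR_\delta=0$ spells out what the paper leaves implicit, but the underlying argument is identical.
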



  \begin{proof}
  Choose a word $w=w_1w_2\cdots w_n \in \Sm_\infty$.
By Proposition~\ref{K-prod-prop} and Theorem~\ref{mpeakp-thm}   we have 
\[
\ba
{[\tpeakR_{\alpha}  \tpeakR_{\gamma}, K^{(\beta)}_{\cD(w)}]} &= 
[\tpeakR_{\alpha} \otimes \tpeakR_{\gamma}, \Delta(K^{(\beta)}_{\cD(w)})] 
\\&=
\sum_{i=0}^n \delta_{\alpha,\cD(w_1\cdots w_i)} \delta_{\gamma,\cD(w_{i+1}\cdots w_n)}
+
\beta \sum_{i=1}^n \delta_{\alpha, \cD(w_1\cdots w_i)} \delta_{\gamma,\cD(w_{i}\cdots w_n)}.
\ea
\]
Now observe that one can have $\alpha=\cD(w_1\cdots w_i) $ and $\gamma=\cD(w_{i+1}\cdots w_n)$ 
precisely when either $i \in \PeakSet(w)$ and $\cD(w) = \alpha\vartriangleleft \gamma$,
$i+1 \in \PeakSet(w)$ and $\cD(w) = \alpha\blacktriangleright \gamma$,
or $\{i,i+1\} \cap \PeakSet(w) = \varnothing$ and $\cD(w) =\alpha\vartriangleright\gamma$.
Similarly, one can have 
$\alpha=\cD(w_1\cdots w_i) $ and $\gamma=\cD(w_{i}\cdots w_n)$
precisely when
 $i \in \PeakSet(w)$ and $\cD(w) = \alpha\bullet \gamma$
 or when
 $i \notin \PeakSet(w)$ and $\cD(w) = \alpha\circ \gamma$.
  \end{proof}

\begin{example}
Here are two examples of Proposition~\ref{MPeakP-prod-prop}.  All five terms appear in  
\[
\tpeakR_{(3,2,5,2)}  \tpeakR_{(4,2)} = \tpeakR_{(3,2,5,3,3,2)} + \tpeakR_{(3,2,5,6,2)}  + \tpeakR_{(3,2,5,2,4,2)} +\beta \cdot\tpeakR_{(3,2,5,5,2)} +\beta \cdot\tpeakR_{(3,2,5,2,3,2)}.
  \]
On the other hand, only three survive in 
\[
\ba \tpeakR_{(3,2,5,1)}  \tpeakR_{(4,2)} &= \tpeakR_{(3,2,5,2,3,2)} + \tpeakR_{(3,2,5,5,2)}  + \tpeakR_{(3,2,5,1,4,2)} +\beta \cdot\tpeakR_{(3,2,5,4,2)} +\beta\cdot \tpeakR_{(3,2,5,1,3,2)}
\\
&=
 \tpeakR_{(3,2,5,2,3,2)} + \tpeakR_{(3,2,5,5,2)}  +\beta\cdot \tpeakR_{(3,2,5,4,2)} .
\ea
  \]
 \end{example}

Below, we abbreviate by writing $\tpeakR_{n}:=\tpeakR_{(n)}$ for $n \in \PP$ and set $\tpeakR_0 := 1$.

\begin{lemma}\label{peak-free-lem}
If $n$ is a positive integer and $\delta_{[n\text{ is even}]} := | \{ n\} \cap \{2,4,6,\dots\}|$ then
\[ \tpeakR_1 \tpeakR_{n-1} \in \delta_{[n\text{ is even}]}  \cdot \tpeakR_n +   \sum_{i=2}^{n-1} (-1)^i \tpeakR_i \tpeakR_{n-i}  
+  \ZZ[\beta]\spanning\left\{ \tpeakR_\alpha : |\alpha| <  n\right\}.
\]
\end{lemma}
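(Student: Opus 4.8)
The plan is to reduce everything modulo the $\ZZ[\beta]$-submodule $V := \ZZ[\beta]\spanning\{\tpeakR_\alpha : |\alpha| < n\}$ and track only the degree-$n$ part of each product, since the asserted membership is precisely a congruence modulo $V$. The key structural observation is that in the product formula of Proposition~\ref{MPeakP-prod-prop} the three leading terms $\tpeakR_{\alpha\blacktriangleright\gamma}$, $\tpeakR_{\alpha\vartriangleright\gamma}$, $\tpeakR_{\alpha\vartriangleleft\gamma}$ are indexed by compositions of $|\alpha|+|\gamma|$, whereas the two $\beta$-weighted terms $\tpeakR_{\alpha\circ\gamma}$, $\tpeakR_{\alpha\bullet\gamma}$ are indexed by compositions of $|\alpha|+|\gamma|-1$. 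Hence for any product $\tpeakR_i\tpeakR_{n-i}$ of total degree $n$, both $\beta$-terms lie in $V$ and may be discarded. Throughout I will use the convention (stated just before Proposition~\ref{MPeakP-prod-prop}) that $\tpeakR_\alpha = 0$ whenever $\alpha$ fails to be a peak composition, which occurs exactly when a non-final part drops below $2$ (a leading $1$) or when subtracting $1$ produces a trailing $0$.

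First I would expand $\tpeakR_1\tpeakR_{n-1}$ via Proposition~\ref{MPeakP-prod-prop} with $\alpha=(1)$ and $\gamma=(n-1)$; this requires $n\ge 2$, which is the range in which the statement has content. The five outputs are $(2,n-2)$, $(n)$, $(1,n-1)$, $(n-1)$, and $(1,n-2)$; the term $\tpeakR_{(1,n-1)}$ vanishes by the leading-$1$ rule, and modulo $V$ the two $\beta$-terms indexed by $(n-1)$ and $(1,n-2)$ disappear, leaving
\[
\tpeakR_1\tpeakR_{n-1} \equiv \tpeakR_{(2,n-2)} + \tpeakR_n \pmod V,
\]
where $\tpeakR_{(2,n-2)}=0$ when $n=2$. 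Next I would expand the general $\tpeakR_i\tpeakR_{n-i}$ for $2\le i\le n-1$ with $\alpha=(i)$, $\gamma=(n-i)$: the degree-$n$ outputs are $(i+1,n-i-1)$, $(i,n-i)$, and $(n)$, whence
\[
\tpeakR_i\tpeakR_{n-i} \equiv \tpeakR_{(i+1,n-i-1)} + \tpeakR_{(i,n-i)} + \tpeakR_n \pmod V,
\]
with $\tpeakR_{(i+1,n-i-1)}=0$ exactly at $i=n-1$.

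Finally I would form the alternating sum $\sum_{i=2}^{n-1}(-1)^i\tpeakR_i\tpeakR_{n-i}$. Writing $T_j := \tpeakR_{(j,n-j)}$ and noting $T_n = 0$, the two-part contributions telescope: the shifted sum $\sum_i(-1)^i T_{i+1}$ cancels all of $\sum_i(-1)^i T_i$ except the $i=2$ term, leaving exactly $T_2 = \tpeakR_{(2,n-2)}$. The $\tpeakR_n$ contributions give $\bigl(\sum_{i=2}^{n-1}(-1)^i\bigr)\tpeakR_n = (1-\delta_{[n\text{ is even}]})\tpeakR_n$, since an alternating sum of signs over the $n-2$ consecutive integers starting at $i=2$ equals $1$ when $n$ is odd and $0$ when $n$ is even. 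Combining yields $\sum_{i=2}^{n-1}(-1)^i\tpeakR_i\tpeakR_{n-i} \equiv \tpeakR_{(2,n-2)} + (1-\delta_{[n\text{ is even}]})\tpeakR_n \pmod V$, and subtracting this from the expansion of $\tpeakR_1\tpeakR_{n-1}$ isolates $\delta_{[n\text{ is even}]}\tpeakR_n$, which is the claim. The main obstacle is not conceptual but the bookkeeping of degenerate outputs (leading-$1$ and trailing-$0$ compositions forcing $\tpeakR_\alpha=0$) together with the exact parity evaluation of $\sum(-1)^i$; getting the telescoping boundary terms and this sign count right is the only genuine work.
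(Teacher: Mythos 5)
Your proof is correct and takes essentially the same route as the paper's: both reduce modulo $\ZZ[\beta]\spanning\{\tpeakR_\alpha : |\alpha|<n\}$ and apply Proposition~\ref{MPeakP-prod-prop} with $\alpha=(i)$, $\gamma=(n-i)$ to obtain the congruences $\tpeakR_1\tpeakR_{n-1}\equiv \tpeakR_n+\tpeakR_{(2,n-2)}$ and $\tpeakR_{(i,n-i)}\equiv \tpeakR_i\tpeakR_{n-i}-\tpeakR_n-\tpeakR_{(i+1,n-i-1)}$, including the same degenerate-composition conventions. The paper's ``successively expanding'' the first identity via the second is precisely your telescoping alternating sum written as a recursion, so the two arguments coincide (and your explicit treatment of the $n=2$ boundary and the parity count $\sum_{i=2}^{n-1}(-1)^i = 1-\delta_{[n\text{ is even}]}$ just makes the paper's implicit bookkeeping visible).
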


\begin{proof}
Let $\cI =   \ZZ[\beta]\spanning \{ \tpeakR_\alpha : |\alpha| <  n \}$.
Proposition~\ref{MPeakP-prod-prop}
implies 
$\tpeakR_1 \tpeakR_{n-1}  +\cI = \tpeakR_{n} +\tpeakR_{(2,n-2)} + \cI
$
for $n>1$, where $\tpeakR_{(n,0)} := 0$.
Likewise, if $2\leq i < n$ then
$
\tpeakR_{(i, n-i)} +\cI = \tpeakR_{i}\tpeakR_{n-i}- \tpeakR_{n}-\tpeakR_{(i+1,n-i-1)} + \cI
$.
We obtain the lemma by 
successively expanding the right hand side of the first identity using the second. 
\end{proof}

\begin{proposition}\label{MPeak-free-prop}
The set $\left\{\tpeakR_n: n =1,3,5,\dots\right\}$ freely generates $\MPeakP$ as a $\ZZ[\beta]$-algebra.
\end{proposition}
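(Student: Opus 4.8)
The plan is to exhibit $\MPeakP$ as a quotient of the free associative $\ZZ[\beta]$-algebra $F := \ZZ[\beta]\langle X_1, X_3, X_5, \dots\rangle$ on one generator $X_{2k-1}$ in each odd degree $2k-1$, via the graded algebra homomorphism $\Psi : F \to \MPeakP$ determined by $X_n \mapsto \tpeakR_n$, and then to show $\Psi$ is an isomorphism. I would separate this into surjectivity of $\Psi$ (the odd $\tpeakR_n$ generate $\MPeakP$ as an algebra) and injectivity of $\Psi$ (the generators satisfy no relations), and I would deduce the latter from a Hilbert-series comparison rather than a direct argument.

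For surjectivity, let $\cA$ denote the subalgebra generated by $\{\tpeakR_n : n \text{ odd}\}$ and prove $\tpeakR_\alpha \in \cA$ for every peak composition $\alpha$ by induction on $N = |\alpha|$. In the single-part case $\alpha = (N)$, Lemma~\ref{peak-free-lem} is exactly what is needed: for $N$ odd, $\tpeakR_N$ is a generator, while for $N$ even the lemma rewrites $\tpeakR_N$ as a $\ZZ[\beta]$-combination of products $\tpeakR_i\tpeakR_{N-i}$ with $0<i<N$ plus a remainder in $\ZZ[\beta]\spanning\{\tpeakR_\beta : |\beta|<N\}$, all of which lie in $\cA$ by the outer induction. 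For $\alpha = (\alpha_1,\dots,\alpha_m)$ with $m\geq 2$, I would apply Proposition~\ref{MPeakP-prod-prop} to $\tpeakR_\mu\tpeakR_{(\alpha_m)}$ with $\mu=(\alpha_1,\dots,\alpha_{m-1})$; since $\mu\vartriangleleft(\alpha_m)=\alpha$, this expresses $\tpeakR_\alpha$ through the product $\tpeakR_\mu\tpeakR_{(\alpha_m)}$ (both factors of degree $<N$, hence in $\cA$) together with the four remaining terms $\tpeakR_{\mu\blacktriangleright(\alpha_m)}$, $\tpeakR_{\mu\vartriangleright(\alpha_m)}$, $\beta\tpeakR_{\mu\circ(\alpha_m)}$, $\beta\tpeakR_{\mu\bullet(\alpha_m)}$. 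The two $\beta$-terms have degree $N-1$ and are covered by the outer induction; the other two have degree $N$, but $\mu\vartriangleright(\alpha_m)$ has strictly fewer parts while $\mu\blacktriangleright(\alpha_m)$ has the same number of parts with strictly smaller last part (and vanishes when $\alpha_m=1$). I would therefore run a secondary induction on the pair (number of parts, last part) to close this case. This bookkeeping with the five product-formula terms, and verifying the secondary induction is well-founded, is the fussiest part of the argument and is where I expect to have to be most careful.

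For injectivity I would compare graded ranks over $\ZZ[\beta]$. The degree-$N$ component of $\MPeakP$ is free of rank $p(N)$, the number of peak compositions of $N$; reading a peak composition as a (possibly empty) run of parts $\geq 2$ followed by a final part $\geq 1$ gives $\sum_N p(N)q^N = 1+\frac{q}{1-q-q^2}=\frac{1-q^2}{1-q-q^2}$. The degree-$N$ component of $F$ is free of rank equal to the number of compositions of $N$ into odd parts, with generating function $\frac{1}{1-(q+q^3+q^5+\cdots)} = \frac{1}{1-q/(1-q^2)}=\frac{1-q^2}{1-q-q^2}$. These agree, so in each degree $\Psi$ is a surjection between free $\ZZ[\beta]$-modules of the same finite rank; since $\ZZ[\beta]$ is a commutative (Noetherian) ring, such a surjection is automatically an isomorphism. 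Hence $\Psi$ is an isomorphism of graded algebras, which is precisely the statement that $\{\tpeakR_n : n=1,3,5,\dots\}$ freely generates $\MPeakP$.
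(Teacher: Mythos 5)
Your overall strategy is correct and genuinely different from the paper's. The paper proves the statement by a double triangularity argument: writing $\bXi_\alpha := \tpeakR_{\alpha_1}\tpeakR_{\alpha_2}\cdots\tpeakR_{\alpha_m}$, it first deduces from Proposition~\ref{MPeakP-prod-prop} that $\bXi_\alpha \in \tpeakR_\alpha + \ZZ[\beta]\spanning\{\tpeakR_\gamma : \gamma <_{\mathrm{revlex}} \alpha\}$ for peak compositions $\alpha$, so that these monomials form a basis, and then uses Lemma~\ref{peak-free-lem} to show $\bXi_{\mathsf{odd}(\alpha)} \in \bXi_\alpha + \ZZ[\beta]\spanning\{\bXi_\gamma : \gamma <_{\mathrm{lex}} \alpha\}$, where $\mathsf{odd}$ is an explicit bijection from peak compositions to odd compositions; unitriangularity then yields spanning and independence simultaneously, with no grading or counting needed. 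You instead split the two halves. Your surjectivity induction is sound, and your worry about well-foundedness is resolved exactly as you suspect: the even single-part case via Lemma~\ref{peak-free-lem} only calls on compositions of strictly smaller size, while for $m \geq 2$ parts the terms $\mu\vartriangleright(\alpha_m)$ and $\mu\blacktriangleright(\alpha_m)$ respectively drop the part count or the last part (the latter vanishing when $\alpha_m=1$), and the $\beta$-terms drop the size. Your independence half replaces the paper's explicit bijection $\mathsf{odd}$ with the equality of the two generating functions $\frac{1-q^2}{1-q-q^2}$ --- the same combinatorial content in enumerative rather than bijective form.

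There is, however, one step that fails as literally written: $\Psi$ is \emph{not} a graded algebra homomorphism, because the module decomposition of $\MPeakP$ by $\ZZ[\beta]\spanning\{\tpeakR_\alpha : |\alpha| = N\}$ (with $\deg\beta = 0$) is not an algebra grading. Indeed Proposition~\ref{MPeakP-prod-prop} gives $\tpeakR_1\tpeakR_1 = \tpeakR_{(2)} + \beta\,\tpeakR_{(1)}$, so $\Psi(X_1^2)$ is not concentrated in degree $2$, and your assertion that ``in each degree $\Psi$ is a surjection between free $\ZZ[\beta]$-modules of the same finite rank'' does not parse, since $\Psi$ does not restrict to maps between these components. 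The repair is cheap but must be made explicit. Since the $\beta$-terms in Proposition~\ref{MPeakP-prod-prop} only lower $|\alpha|$, the submodules $F_{\leq N} := \ZZ[\beta]\spanning\{\tpeakR_\alpha : |\alpha| \leq N\}$ form an algebra filtration, and your induction in fact proves the strict statement $\Psi(F'_{\leq N}) = F_{\leq N}$, where $F'_{\leq N}$ is the span of monomials $X_{\alpha_1}\cdots X_{\alpha_k}$ with $\alpha$ an odd composition of size at most $N$ (every expression your induction produces for $\tpeakR_\alpha$ has total generator-weight at most $|\alpha|$). Since your two generating functions agree coefficientwise, the cumulative ranks $\sum_{M \leq N} p(M)$ also agree, so each restriction $\Psi : F'_{\leq N} \to F_{\leq N}$ is a surjection between free $\ZZ[\beta]$-modules of equal finite rank, hence an isomorphism (your final fact is correct, and by Vasconcelos' theorem does not even require Noetherianity); injectivity of $\Psi$ follows because the filtration of $F$ is exhaustive. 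Alternatively, you could obtain an honest grading by setting $\deg\beta = 1$ and comparing ranks of the graded pieces over $\ZZ$. With this one correction your proof is complete; note that the paper's triangularity route sidesteps the issue entirely by never invoking a grading.
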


\begin{proof}
Suppose $\alpha=(\alpha_1,\alpha_2,\dots,\alpha_m)$ is a composition
and let $\bXi_\alpha := \tpeakR_{\alpha_1}  \tpeakR_{\alpha_2} \cdots  \tpeakR_{\alpha_m}$.
We say that $\alpha$ is \defn{odd} if $\alpha_i$ is an odd integer for each $i \in [m]$.
It suffices to show that the elements  $\bXi_\alpha$ form a $\ZZ[\beta]$-basis for $\MPeakP$ when $\alpha$ ranges over all odd compositions.

First let $<_{\mathrm{revlex}}$ be the partial order on compositions with $\gamma <_{\mathrm{revlex}} \alpha$ 
if $|\gamma| < |\alpha|$ or if $|\gamma| = |\alpha|$ and $\gamma$ exceeds $\alpha$ in lexicographic order.
It follows from Proposition~\ref{MPeakP-prod-prop}
that if $\alpha$ is any peak composition then $\bXi_\alpha \in \tpeakR_\alpha + \ZZ[\beta]\spanning\left\{ \tpeakR_\gamma : \gamma <_{\mathrm{revlex}} \alpha\right\}$.
Thus  the elements  $\bXi_\alpha$ form a basis for $\MPeakP$ at least when $\alpha$ ranges over all peak compositions.

If $\alpha$ is a peak composition then let $\mathsf{odd}(\alpha)$ be the odd composition formed by replacing 
each even part $\alpha_i$ by two consecutive parts $(1,\alpha_i-1)$. 
For example, $\mathsf{odd}((3,6,3,4,2)) = (3,1,5,3,1,3,1,1)$.
Let $<_{\mathrm{lex}}$ be the partial order on compositions with $\gamma <_{\mathrm{lex}} \alpha$ if $|\gamma| < |\alpha|$
or if $|\gamma| = |\alpha|$ and $\gamma$ precedes $\alpha$ lexicographically.
It follows by induction on $\ell(\alpha)$ using Lemma~\ref{peak-free-lem}
that
$\bXi_{\mathsf{odd}(\alpha)} \in \bXi_\alpha + \ZZ[\beta]\spanning\left\{ \bXi_\gamma : \gamma <_{\mathrm{lex}} \alpha\right\}$.
Since $\mathsf{odd}$ is a bijection from peak compositions to odd compositions,
we deduce that $\left\{\bXi_\alpha : \alpha\text{ is an odd composition}\right\}$
 is another $\ZZ[\beta]$-basis for $\MPeakP$ as desired.
\end{proof}

To describe the dual of $\mQSymP$
we need a variant of $\SM_n$.
 Define $\barSM_n$ to consist of the set compositions $A = A_1 A_2\cdots A_m \in \SetComp_n$
 such that if $\{i,i+1\} \subseteq A_j$ for $i \in [n-1]$ and $j \in [m]$ then 
 the union $A_1 \sqcup A_2 \sqcup \dots \sqcup A_{j}$ contains neither $i-1$ nor $i+2$.
Then $\SM_n \subseteq \barSM_n$, and 
 for $A \in \barSM_n$ it still holds that $i \in \PeakSet(A)$ if and only if 
 $1<i<n$ and $i$ appears in a block of $A$ after the blocks containing $i-1$ and $i+1$.
For example, the elements of $\barSM_4 - \SM_4$ with peak set $\{3\}$ are 
$
 \{1,2,4\}\{3\},$
$  \{1,2\}\{4\}\{3\},$ and
$   \{4\}\{1,2\}\{3\}.$
For any $A \in \SetComp_n$ 
let $\cons(A) := | \left\{ i \in [n-1] : 
\{i,i+1\}\text{ is a subset of some block of $A$}\right\}|$.
\begin{definition}
For a peak composition $\alpha\vDash n$
let
\[
\opeakR_\alpha  := \sum_{\substack{A \in \barSM_n \\ \PeakSet(A) = I(\alpha)}} 2^{\ell(\alpha) - \cons(A)} [A]^{(\beta)}_\M \in \MMPR 
  .\]
Then let $\MPeakQ$ be the free $\ZZ[\beta]$-module 
with $\left\{\opeakR_\alpha : \alpha\text{ is a peak composition}\right\}$ as a basis.
\end{definition}

For example, if $\alpha=(3,1)$ then we have 
\[
\ba \opeakR_{(3,1)} &= 4\cdot \tpeakR_{(3,1)}
+
2\cdot [\{1,2,4\}\{3\}]^{(\beta)}_\M +
2\cdot  [\{1,2\}\{4\}\{3\}]^{(\beta)}_\M +
  2\cdot [\{4\}\{1,2\}\{3\}]^{(\beta)}_\M
  \\&= 4\cdot \tpeakR_{(3,1)}
+
2\beta\cdot [\{1,3\}\{2\}]^{(\beta)}_\M +
2\beta\cdot  [\{1\}\{3\}\{2\}]^{(\beta)}_\M +
  2\beta\cdot [\{3\}\{1\}\{2\}]^{(\beta)}_\M 
  \\&= 4\cdot \tpeakR_{(3,1)} + 2\beta \cdot \tpeakR_{(2,1)}.
  \ea
   \]
The following shows that $\{\opeakR_\alpha\}$ is linearly independent so $\MPeakQ$ is well-defined.

\begin{lemma}\label{opeak-lem}
If $\alpha$ is a peak composition with $\ell=\ell(\alpha)$ then
\[
\opeakR_\alpha= \sum_{\delta \in \{0,1\}^\ell} 2^{\ell-|\delta|} \beta^{|\delta|} \tpeakR_{\alpha-\delta}
\quand
\tpeakR_\alpha= \sum_{\delta \in (\NN)^\ell} 2^{-\ell-|\delta|} (-\beta)^{|\delta|} \opeakR_{\alpha-\delta}
\]
where we set $\tpeakR_{\alpha-\delta} :=0$ and $\opeakR_{\alpha-\delta} :=0$ if $\alpha-\delta$  is not a peak composition.
\end{lemma}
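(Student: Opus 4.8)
The plan is to prove the first identity directly from the definitions and then obtain the second as its formal inverse. To set up the reduction, recall that $\opeakR_\alpha=\sum_{A}2^{\ell-\cons(A)}[A]^{(\beta)}_\M$, the sum being over $A\in\barSM_n$ with $\PeakSet(A)=I(\alpha)$. Given such an $A$, I repeatedly apply the relation $\equivM$: whenever a block of the current set composition $B$ contains a consecutive pair $\{i,i+1\}$, I delete $i+1$ and standardize to obtain $A'$ with $A'\equivM B$, so that $[B]^{(\beta)}_\M=\beta\,[A']^{(\beta)}_\M$ in $\MMPR$. The defining condition of $\barSM_n$ --- that a block containing $\{i,i+1\}$ forces $A_1\sqcup\dots\sqcup A_j$ to avoid $i-1$ and $i+2$ --- guarantees that no deletion ever creates a new common-block consecutive pair and that the result is independent of the order of deletions. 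Iterating therefore yields a well-defined $\rho(A)\in\SM_{n-\cons(A)}$ with $[A]^{(\beta)}_\M=\beta^{\cons(A)}[\rho(A)]^{(\beta)}_\M$.

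Next I would analyze the effect of $\rho$ on peak data. A degree count gives $|\rho(A)|=n-\cons(A)$, and I would check that a single deletion preserves the number of peaks and decrements exactly one part of the governing peak composition by $1$; the separation built into $\barSM_n$ forces distinct consecutive pairs to affect distinct parts. Consequently $\PeakSet(\rho(A))=I(\alpha-\delta)$ for a unique $\delta\in\{0,1\}^\ell$ with $|\delta|=\cons(A)$, and $\alpha-\delta$ is automatically a peak composition, being the peak composition of $\rho(A)\in\SM_{n-\cons(A)}$. The crucial claim is that $\rho$ is a bijection
\[
\{A\in\barSM_n:\PeakSet(A)=I(\alpha)\}\ \xrightarrow{\ \sim\ }\ \bigsqcup_{\delta}\{A'\in\SM_{n-|\delta|}:\PeakSet(A')=I(\alpha-\delta)\},
\]
where $\delta$ runs over those elements of $\{0,1\}^\ell$ with $\alpha-\delta$ a peak composition. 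The inverse map reinserts, for each index $i$ with $\delta_i=1$, a consecutive partner at the start of the $i$-th run of $A'$ delimited by its peaks; verifying that the result lies in $\barSM_n$ and has peak set $I(\alpha)$ is precisely what the $\barSM_n$ condition encodes. Granting the bijection, I group the defining sum by $\delta$ and by $A'=\rho(A)$: each $A$ in the fiber over $(A',\delta)$ contributes $2^{\ell-\cons(A)}\beta^{\cons(A)}[A']^{(\beta)}_\M=2^{\ell-|\delta|}\beta^{|\delta|}[A']^{(\beta)}_\M$, and summing over $A'$ produces $\sum_{\delta}2^{\ell-|\delta|}\beta^{|\delta|}\tpeakR_{\alpha-\delta}$, which is the first identity.

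The first identity exhibits the transition from the basis $\{\tpeakR_\gamma\}$ of $\MPeakP$ to the family $\{\opeakR_\alpha\}$ as a matrix that is triangular with respect to total degree with invertible diagonal entries $2^{\ell(\alpha)}$, hence invertible over $\QQ[\beta]$. This both proves that $\{\opeakR_\alpha\}$ is linearly independent (so $\MPeakQ$ is well-defined) and lets me recover $\{\tpeakR_\alpha\}$ from $\{\opeakR_\alpha\}$. To identify the inverse explicitly, note that the coefficient $2^{\ell-|\delta|}\beta^{|\delta|}=\prod_{i}2^{1-\delta_i}\beta^{\delta_i}$ and the constraint ``$\alpha-\delta$ is a peak composition'' ($\delta_i\le\alpha_i-2$ for $i<\ell$ and $\delta_\ell\le\alpha_\ell-1$) both factor over the parts of $\alpha$, so the transition is a tensor product over parts of the single-part relation $\opeakR_{(a)}=2\tpeakR_{(a)}+\beta\tpeakR_{(a-1)}$ (with $\tpeakR$ of an out-of-range part read as $0$). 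A one-line telescoping check shows this single-part relation is inverted by $\tpeakR_{(a)}=\sum_{j\ge0}2^{-1-j}(-\beta)^{j}\opeakR_{(a-j)}$, exactly as in the derivation of \eqref{417-eq}. Taking the tensor product over parts turns this into the second identity, now valid with coefficients in $\ZZ[\tfrac12,\beta]$.

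The main obstacle is the bijection in the second paragraph: proving that common-block consecutive pairs occur ``one per run,'' that reduction decrements the matching part and tracks the peak set, and that the reinsertion map is a well-defined two-sided inverse of $\rho$. Each of these is a careful but elementary consequence of the $\barSM_n$ condition together with the effect of standardization on $\PeakSet$; once it is in hand, the weight bookkeeping and the inversion are routine.
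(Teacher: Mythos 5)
Your overall route is the same as the paper's. The paper proves the first identity by exhibiting exactly your map $\rho$ --- sending $A \in \barSM_{|\alpha|}$ to the unique $\leq_\M$-minimal element $B \in \SM_\infty$ below it --- as a bijection onto $\bigsqcup_\delta \{B \in \SM_{|\alpha|-|\delta|} : \PeakSet(B) = I(\alpha-\delta)\}$, with the same bookkeeping $\cons(A) = |\delta|$ and $[A]^{(\beta)}_\M = \beta^{|\delta|}[B]^{(\beta)}_\M$. Note that well-definedness and order-independence of your iterated reduction come for free from the paper's earlier observation that every lower set under $\equivM$ contains a unique minimal element, so the first part of your argument can be shortened. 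Your handling of the second identity --- observing that the transition matrix is block-diagonal in $\ell(\alpha)$ and factors over the parts of $\alpha$, then inverting the one-part relation $\opeakR_{(a)} = 2\tpeakR_{(a)} + \beta\,\tpeakR_{(a-1)}$ by telescoping --- is correct and is actually more explicit than the paper, which dismisses this inversion as ``straightforward.''

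The one genuine flaw is in your inverse bijection: reinserting the consecutive partner ``at the start of the $i$-th run of $A'$ delimited by its peaks'' is wrong if ``start'' means the smallest value of the run. The paper instead doubles the \emph{valley} of each run --- the unique value $a$ whose block is strictly before the blocks containing both $a-1$ and $a+1$ --- and this need not be the run's first value. Concretely, take $B = \{2\}\{1,3\} \in \SM_3$, which has $\PeakSet(B) = \varnothing$, hence corresponds to $\gamma = (3)$ with a single run $\{1,2,3\}$ whose valley is $2$. Doubling the valley gives $\{2,3\}\{1,4\} \in \barSM_4$ with $\PeakSet = \varnothing = I((4))$, as required for $\alpha = (4)$, $\delta = (1)$; doubling the run's first value $1$ instead gives $\{3\}\{1,2,4\}$, which fails the $\barSM_4$ condition (the union of blocks through the one containing the pair $\{1,2\}$ contains $3$) and has $\PeakSet = \{2\} \neq \varnothing$. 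The fix is exactly the paper's notion: the defining condition of $\barSM_n$ forces the blocks of $i-1$ and $i+2$ to lie strictly after the block containing a pair $\{i,i+1\}$, which says precisely that a doubled pair must sit at a valley; and since block position is unimodal on each run (an interior local maximum would be a peak), the valley is the unique block-minimal element of its run. With ``start in value order'' replaced by ``block-minimal element of the run,'' your construction coincides with the paper's and the rest of your argument goes through.
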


\begin{proof}
The map that sends  $A \in  \SetComp$ to the unique $B \in \SM_\infty$ with $B \leq_\M A$
is a bijection 
\be\label{opeak-map}
\left\{ A \in \barSM_{|\alpha|} : \PeakSet(A) = I(\alpha)\right\} \to \bigsqcup_{\delta} \left\{ B \in \SM_{|\alpha|-|\delta|} : \PeakSet(B) = I(\alpha-\delta)\right\}\ee
where the union is over $\delta \in\{0,1\}^\ell$ such that $\alpha-\delta$ is a peak composition.
To construct the inverse map, define a \defn{valley} of a big multipermutation  $B \in \SM_\infty$
to be a number $a$ whose block in $B$ is not weakly after the blocks containing $a-1$ or $a+1$.
Each $B \in \SM_{|\alpha|-|\delta|}$ with $\PeakSet(B) = I(\alpha-\delta)$  
has exactly $\ell$ valleys $a_1<a_2<\dots<a_\ell$.
Form $A$ from such $B$ by replacing the valley $a_i$ by two numbers $a_i'<a_i$ whenever $\delta_i=1$ and then standardizing.
For example,
the valleys of $B=\{1,3\}\{2\}$ are $1$ and $3$ so if $\delta=(1,0)$ then this inverse map would give 
$ B=\{1,3\}\{2\} \mapsto \{1',1,3\}\{2\} \mapsto \{1,2,4\}\{3\} = A$.
The formula for $\opeakR_\alpha$ follows  since
if \eqref{opeak-map} sends $A \in \barSM_{|\alpha|}$ to    $B \in \SM_{|\alpha|-|\delta|}$ then $o(A) =|\delta|$ and $[A]^{(\beta)}_\M = \beta^{|\delta|} [B]^{(\beta)}_\M$.
Inverting this identity to get the formula for $\tpeakR_\alpha$ is straightforward.
\end{proof}

\begin{theorem}
There is a unique extension of $[\cdot,\cdot]$
to a bilinear form $\MPeakQ \times  \mQSymP \to \ZZ[\beta]$,
continuous in the second coordinate, with
 $[\opeakR_\alpha,\bK^{(\beta)}_\gamma] = \delta_{\alpha\gamma}$ 
for all $\alpha$ and $\gamma$.
Therefore $\MPeakQ$ is a Hopf subalgebra of $\MPeakP$
and is the Hopf algebra dual to $\mQSymP$ via  $[\cdot,\cdot]$.
\end{theorem}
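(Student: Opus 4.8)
The plan is to pin down the extended form by its values on a basis and pseudobasis, verify it restricts correctly, and then transport the Hopf structure across the duality by an adjointness argument. Since $\MPeakQ$ is free with basis $\{\opeakR_\alpha\}$ and $\{\bK^{(\beta)}_\gamma\}$ is a pseudobasis for $\mQSymP$, a bilinear form on $\MPeakQ \times \mQSymP$ that is continuous in the second coordinate is completely determined by its values on the pairs $(\opeakR_\alpha, \bK^{(\beta)}_\gamma)$; conversely, the prescription $[\opeakR_\alpha, \bK^{(\beta)}_\gamma] = \delta_{\alpha\gamma}$ defines such a form (namely $[\opeakR_\alpha, f]$ extracts the $\bK^{(\beta)}_\alpha$-coefficient of $f$), which is automatically nondegenerate. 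This gives uniqueness immediately, so the only content in the first sentence is to check that this form agrees with the original $[\cdot,\cdot]$ on the overlap $\MPeakQ \times \mQSymQ$.

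For that compatibility I would evaluate $[\opeakR_\alpha, K^{(\beta)}_\eta]$ for a peak composition $\eta$ in two ways, writing $\ell=\ell(\alpha)$. Expanding $\opeakR_\alpha = \sum_{\delta\in\{0,1\}^\ell} 2^{\ell-|\delta|}\beta^{|\delta|}\tpeakR_{\alpha-\delta}$ by Lemma~\ref{opeak-lem} and using $[\tpeakR_{\alpha-\delta}, K^{(\beta)}_\eta] = \delta_{\alpha-\delta,\eta}$ leaves one surviving term, occurring exactly when $\alpha-\eta\in\{0,1\}^\ell$, with value $2^{\ell-|\alpha-\eta|}\beta^{|\alpha-\eta|}$; here no difficulty arises from the convention $\tpeakR_{\alpha-\delta}=0$ off peak compositions, since $\alpha-\delta=\eta$ is a peak composition by hypothesis. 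Expanding instead $K^{(\beta)}_\eta = \sum_{\delta'\in\{0,1\}^\ell} 2^{\ell-|\delta'|}\beta^{|\delta'|}\bK^{(\beta)}_{\eta+\delta'}$ by the first identity of \eqref{417-eq} and using the prescribed values leaves the single term with $\eta+\delta'=\alpha$, again occurring exactly when $\alpha-\eta\in\{0,1\}^\ell$ and equal to $2^{\ell-|\alpha-\eta|}\beta^{|\alpha-\eta|}$. The two agree because both expansions are indexed by the same vector $\delta=\delta'=\alpha-\eta$, which proves existence.

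For the second sentence I would avoid a direct closure computation and argue by duality. The inclusion $j:\mQSymQ\hookrightarrow\mQSymP$ is a morphism of LC-Hopf algebras, being a restriction of the identity on $\mQSym$. The very computation above shows that the natural inclusion $\phi:\MPeakQ\hookrightarrow\MPeakP$ (viewing each $\opeakR_\alpha$ inside $\MNSym$ via Lemma~\ref{opeak-lem}) is the adjoint of $j$ relative to the pairing of Theorem~\ref{mpeakp-thm} and the form just constructed: both $[\phi(\opeakR_\alpha),K^{(\beta)}_\eta]$ and $[\opeakR_\alpha, j(K^{(\beta)}_\eta)]$ equal the common value above. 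Since $\mQSymP$ is an LC-Hopf algebra and $[\cdot,\cdot]$ is a perfect pairing, the duality of Section~\ref{completions-sect} endows $\MPeakQ$ with the Hopf algebra structure dual to $\mQSymP$, and functoriality of this (contravariant) duality makes the adjoint inclusion a morphism of Hopf algebras.

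Finally, the elements $\opeakR_\alpha$ are linearly independent, as Lemma~\ref{opeak-lem} expresses them triangularly in the $\tpeakR_\alpha$ with invertible leading coefficient $2^{\ell(\alpha)}$, so the adjoint inclusion is injective and hence realizes $\MPeakQ$ as a Hopf subalgebra of $\MPeakP$ that is dual to $\mQSymP$. I expect the main obstacle to be purely the bookkeeping in the compatibility step — matching the two change-of-basis expansions of Lemma~\ref{opeak-lem} and \eqref{417-eq}, together with their boundary behavior on non-peak compositions — but the observation that both collapse onto the single index $\alpha-\eta\in\{0,1\}^\ell$ makes them coincide on the nose.
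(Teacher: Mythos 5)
Your proposal is correct, and it rests on the same two pillars as the paper's own (very terse) proof: biorthogonality via the change-of-basis identities of Lemma~\ref{opeak-lem} and \eqref{417-eq}, then transfer of the Hopf duality from the pair $(\MPeakP,\mQSymQ)$ of Theorem~\ref{mpeakp-thm}. The differences are organizational but real. For the biorthogonality, the paper substitutes the \emph{second} identity of \eqref{417-eq}, expanding $\bK^{(\beta)}_\gamma$ as an infinite $\QQ[\beta]$-linear combination of $K^{(\beta)}$'s, and lets the resulting double sum telescope to $\delta_{\alpha\gamma}$; you instead verify agreement of the two forms on the overlap $\MPeakQ\times\mQSymQ$ using only the \emph{finite}, integral expansions (Lemma~\ref{opeak-lem} and the first identity of \eqref{417-eq}), so your computation collapses onto the single matching index $\delta=\alpha-\eta$ and never leaves $\ZZ[\beta]$ --- slightly cleaner bookkeeping for the same content. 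For the second assertion, the paper's one-line argument (that every element of $\mQSymP$ is a formal $\QQ[\beta]$-combination of elements of $\mQSymQ$) leaves the transfer mechanism implicit; your version --- the inclusion $\MPeakQ\hookrightarrow\MPeakP$ is adjoint to the LC-Hopf morphism $\mQSymQ\hookrightarrow\mQSymP$, hence a Hopf morphism by the contravariant functoriality of the duality of Section~\ref{completions-sect} --- makes it explicit, and is the same mode of reasoning the paper uses to prove Theorem~\ref{mpeakp-thm} itself, so nothing extra is being assumed. One pedantic point: in your second expansion the exponent should a priori be $\ell(\eta)$ rather than $\ell(\alpha)$, but since any surviving term forces $\ell(\eta)=\ell(\alpha)$ (the conventions kill sequences ending in zero or with an interior part equal to one), this slip costs nothing.
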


\begin{proof}
The first claim follows by computing  $[\opeakR_\alpha,\bK^{(\beta)}_\gamma]  $ 
from 
the identity $[\tpeakR_\alpha,K^{(\beta)}_\gamma] := \delta_{\alpha\gamma}$ after
substituting the formulas in Lemma~\ref{opeak-lem} and \eqref{417-eq} for $\opeakR_\alpha$ and $\bK^{(\beta)}_\gamma$.
The second assertion holds as $\MPeakP$ and $\mQSymQ$ are already dual via $[\cdot,\cdot]$ by Theorem~\ref{mpeakp-thm}
and each element of $\mQSymP$ is a formal $\QQ[\beta]$-linear combination of elements of $\mQSymQ$  by \eqref{417-eq}.
\end{proof}

\begin{remark*}
It follows that the 
morphism  $\mMPR \to \mQSymP$ adjoint to 
the inclusion
$\MPeakQ \hookrightarrow \MMPR$ 
has the same formula 
$[w]^{(\beta)}_\m \mapsto K^{(\beta)}_{\cD(w)}$  for $w \in \Sm_\infty$
as the adjoint map in Theorem~\ref{mpeakp-thm}.
\end{remark*}

Define
$\alpha \ast \gamma :=
 (\alpha_1,\dots,\alpha_{m-1}, \alpha_m-2+\gamma_1,\gamma_2,\dots,\gamma_n)$
 for 
 nonempty peak compositions $\alpha$ and $\gamma$  of length $m$ and $n$.
Below, as usual, we set $\opeakR_\alpha = 0$ if $\alpha$ is not a peak composition.

\begin{proposition}\label{opeak-prod-prop}
Suppose $\alpha$ and $\gamma$ are nonempty peak compositions of length $m$ and $n$. Then
\[\opeakR_{\alpha}  \opeakR_{\gamma}
= 
  \opeakR_{\alpha\blacktriangleright \gamma} + 2\cdot \opeakR_{\alpha\vartriangleright \gamma} + \opeakR_{\alpha \vartriangleleft \gamma} 
+ (1 + r + s) \beta  \cdot \opeakR_{\alpha \circ \gamma} +  \beta \cdot \opeakR_{\alpha \bullet \gamma} + rs   \beta^2 \cdot \opeakR_{\alpha \ast \gamma}
\]
where  $r=1$ (respectively, $s=1$) if the sequence
$(\alpha_1,\dots,\alpha_{m-1},\alpha_m-1)$ (respectively, the sequence $(\gamma_1-1,\gamma_2, \dots,\gamma_n)$)
is a peak composition\footnote{This means  $r=1$ if the last part of $\alpha$ is greater than one,
and $s=1$ if $\gamma_1>2$ or $\gamma=(2)$.}
and otherwise $r=0$ (respectively, $s=0$).
\end{proposition}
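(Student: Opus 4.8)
The plan is to deduce this product formula from its counterpart in the $\tpeakR$-basis, namely Proposition~\ref{MPeakP-prod-prop}, by passing through the change of basis in Lemma~\ref{opeak-lem}. Since $\{\tpeakR_\alpha\}$ is a $\ZZ[\beta]$-basis of $\MPeakP\supseteq\MPeakQ$, it suffices to verify the asserted identity after expanding both sides in this basis. On the left, I would first use Lemma~\ref{opeak-lem} to write $\opeakR_\alpha=\sum_{\delta}2^{m-|\delta|}\beta^{|\delta|}\tpeakR_{\alpha-\delta}$ and $\opeakR_\gamma=\sum_{\epsilon}2^{n-|\epsilon|}\beta^{|\epsilon|}\tpeakR_{\gamma-\epsilon}$ (finite sums over $\delta\in\{0,1\}^m$ and $\epsilon\in\{0,1\}^n$, with non-peak indices dropped), multiply out, and apply Proposition~\ref{MPeakP-prod-prop} to each product $\tpeakR_{\alpha-\delta}\tpeakR_{\gamma-\epsilon}$. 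On the right, I would expand each of the six terms $\opeakR_{\alpha\blacktriangleright\gamma},\dots,\opeakR_{\alpha\ast\gamma}$ the same way. Both expansions are finite $\ZZ[\beta]$-combinations of $\tpeakR$'s, so the claim reduces to matching coefficients.

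The computation simplifies dramatically once one observes that each of the five operations $\blacktriangleright,\vartriangleright,\vartriangleleft,\circ,\bullet$ in Proposition~\ref{MPeakP-prod-prop}, as well as the decrements coming from Lemma~\ref{opeak-lem}, leave the parts $\alpha_1,\dots,\alpha_{m-1}$ and $\gamma_2,\dots,\gamma_n$ untouched and only modify the \emph{junction} formed from $\alpha_m$ and $\gamma_1$. Consequently the weights attached to the non-junction entries factor out identically on both sides, and the problem collapses to a finite case analysis on the two junction decrements $\delta_m,\epsilon_1\in\{0,1\}$. It is convenient to separate the terms by degree of the resulting composition: those of length $m+n$ (the operations $\vartriangleleft,\blacktriangleright,\bullet$, matched against $\opeakR_{\alpha\vartriangleleft\gamma},\opeakR_{\alpha\blacktriangleright\gamma},\opeakR_{\alpha\bullet\gamma}$) and those of length $m+n-1$ (the operations $\vartriangleright,\circ$, matched against $\opeakR_{\alpha\vartriangleright\gamma},\opeakR_{\alpha\circ\gamma},\opeakR_{\alpha\ast\gamma}$).

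The indicators $r$ and $s$ enter precisely here. The decrement $\delta_m=1$ is legal, in the sense that $\alpha-\delta$ remains a peak composition, exactly when $(\alpha_1,\dots,\alpha_{m-1},\alpha_m-1)$ is a peak composition, i.e.\ when $r=1$; likewise $\epsilon_1=1$ is legal exactly when $s=1$. Summing the junction weights $2^{1-\delta_m}\beta^{\delta_m}$ and $2^{1-\epsilon_1}\beta^{\epsilon_1}$ over the legal pairs $(\delta_m,\epsilon_1)$, together with the extra factor $\beta$ carried by the $\circ$ and $\bullet$ operations, produces the coefficients $2$, $(1+r+s)\beta$, and $rs\beta^2$ after the right-hand side is re-expanded through Lemma~\ref{opeak-lem}. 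I expect the main obstacle to be the bookkeeping of \emph{peak-validity at the boundary}: a junction part equal to $1$ in a non-final position, or the degenerate case $n=1$ where the junction becomes the last part, forces certain $\tpeakR$-terms to vanish, and it is exactly these forced cancellations that turn the naive weighted sums into the stated coefficients (for instance collapsing the would-be $\opeakR_{\alpha\ast\gamma}$ contribution to zero unless $rs=1$). A clean way to discharge this is to fix a target junction value and compare, as a polynomial in $\beta$, the total coefficient it receives from the left-hand product against the total it receives from the claimed right-hand side, checking agreement in each of the four cases $(r,s)\in\{0,1\}^2$; the same pattern of cancellation that validated Proposition~\ref{MPeakP-prod-prop} then closes the argument. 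Alternatively, one could argue by duality, computing $[\opeakR_\alpha\opeakR_\gamma,\bK^{(\beta)}_\mu]=[\opeakR_\alpha\otimes\opeakR_\gamma,\Delta(\bK^{(\beta)}_\mu)]$, but this requires first extracting a coproduct formula for $\bK^{(\beta)}_\mu$ from Proposition~\ref{K-prod-prop} and \eqref{417-eq}, which is comparable in difficulty.
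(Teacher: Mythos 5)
Your proposal is correct and follows essentially the same route as the paper: the paper also expands $\opeakR_\alpha$ and $\opeakR_\gamma$ into the $\tpeakR$-basis via Lemma~\ref{opeak-lem}, applies Proposition~\ref{MPeakP-prod-prop}, and isolates the junction $(\alpha_m,\gamma_1)$, formalizing your ``non-junction weights factor out'' observation by introducing the partial sums $(\alpha\mid x]$, $[y\mid \gamma)$, $(\alpha\mid x\mid \gamma)$, and $(\alpha\mid x\mid y\mid \gamma)$ in which only the junction entries remain free. Your accounting of where $r$ and $s$ enter --- the legality of the junction decrements $\delta_m$ and $\epsilon_1$, with the boundary vanishing conventions handled by setting invalid $\tpeakR$-terms to zero --- matches the paper's use of the convention that these partial sums vanish when $x=0$ or $y=0$, so the remaining work is the same ``straightforward algebra'' the paper itself leaves to the reader.
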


\begin{proof}
For each $x,y\in\NN$ define
\[
(\alpha\mid x] := \sum_{\substack{\delta \in \{0,1\}^{m} \\ \delta_m=0}} 2^{m-1-|\delta|} \beta^{|\delta|} \tpeakR_{(\alpha_1,\dots,\alpha_{m-1}, x)-\delta}
\text{\ \ and\ \ }
[y\mid \gamma) := \sum_{\substack{\delta \in \{0,1\}^{n} \\ \delta_1=0 }} 2^{n-1-|\delta|} \beta^{|\delta|} \tpeakR_{(y, \gamma_2, \dots,\gamma_n)-\delta}
\]
so that $\opeakR_\alpha= 2\cdot (\alpha\mid \alpha_m]+ \beta\cdot (\alpha\mid \alpha_m-1]$
and
$\opeakR_\gamma= 2\cdot [\gamma_1\mid \gamma)+ \beta\cdot [\gamma_1-1\mid \gamma)$.
Also let
\[
\ba
(\alpha\mid x \mid \gamma) &:= \sum_{\substack{\delta \in \{0,1\}^{m+n-1} \\ \delta_m=0}} 2^{m+n-2-|\delta|} \beta^{|\delta|} \tpeakR_{(\alpha_1,\dots,\alpha_{m-1}, x,\gamma_2,\dots,\gamma_n)-\delta},
\\
(\alpha\mid x \mid y  \mid \gamma) &:= \sum_{\substack{\delta \in \{0,1\}^{m+n} \\ \delta_m=\delta_{m+1}=0 }} 2^{m+n-2-|\delta|} \beta^{|\delta|} \tpeakR_{(\alpha_1,\dots,\alpha_{m-1}, x, y, \gamma_2, \dots,\gamma_n)-\delta}.
\ea
\]
Our conventions mean that 
these summations are zero if $x=0$ or $y=0$.
By Proposition~\ref{MPeakP-prod-prop}  
\[
\ba 
(\alpha\mid x]\cdot [y\mid \gamma) = (\alpha\mid x+1 \mid y-1 \mid \gamma) &+  (\alpha\mid x + y \mid \gamma)  +  (\alpha\mid x \mid y \mid \gamma) 
\\&+ \beta\cdot  (\alpha\mid x + y-1 \mid \gamma)  + \beta \cdot (\alpha\mid x \mid y-1 \mid \gamma) 
\ea
 \]
 for any positive integers $x$ and $y$.
 The desired formula follows by using this identity to expand the right side of
 $
 \opeakR_\alpha  \opeakR_\gamma= \(2\cdot (\alpha\mid \alpha_m]+ \beta\cdot (\alpha\mid \alpha_m-1]\)  \(2\cdot [\gamma_1\mid \gamma)+ \beta \cdot [\gamma_1-1\mid \gamma)\)
$
and then combining terms. There are a large number of terms and a few different cases to consider (according to whether $\alpha_m=1$ or $\gamma_1= 1$),
but this is all straightforward algebra.
\end{proof}

\begin{example}
It holds that 
{\[
\ba
\opeakR_{(3,2,5,2)}  \opeakR_{(4,2)} = \opeakR_{(3,2,5,3,3,2)} &+ 2\cdot\opeakR_{(3,2,5,6,2)}  + \opeakR_{(3,2,5,2,4,2)} \\&+3\beta\cdot \opeakR_{(3,2,5,5,2)} +\beta \cdot\opeakR_{(3,2,5,2,3,2)} + \beta^2\cdot \opeakR_{(3,2,5,4,2)}
\ea
  \]
  }while
$
\opeakR_{(3,2,5,1)}  \opeakR_{(4,2)} =  \opeakR_{(3,2,5,2,3,2)} +2\cdot\opeakR_{(3,2,5,5,2)} +2\beta \cdot\opeakR_{(3,2,5,4,2)} .
 $
 \end{example}

As above, for $n \in \PP$ we set 
\be\label{opeak-Rn}
\opeakR_{n}:=\opeakR_{(n)} = \begin{cases} 2\cdot \tpeakR_n + \beta\cdot \tpeakR_{n-1}&\text{if }n>1 \\
2\cdot \tpeakR_n &\text{if }n=1
\end{cases}
\quand \opeakR_0 := 1.\ee
The following identities suffice to compute coproducts in $\MPeakP$ and $\MPeakQ$:

 \begin{proposition}\label{peak-coproduct-prop}
If $n\in \PP$ then  $ \Delta(\opeakR_n) =
 \sum_{i=0}^{n} \opeakR_i \otimes \opeakR_{n-i}$ and
\[ \Delta(\tpeakR_n) =
1\otimes \tpeakR_{n}   + 
 \sum_{i=1}^{n} \tpeakR_i \otimes \opeakR_{n-i}
=
  \tpeakR_{n}\otimes 1 + 
 \sum_{i=0}^{n-1} \opeakR_i \otimes \tpeakR_{n-i} 
 .\]

\end{proposition}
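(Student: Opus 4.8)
The plan is to package the three identities as statements about the generating series $T := \sum_{n\ge0}\tpeakR_n t^n$ and $O := \sum_{n\ge0}\opeakR_n t^n$, regarded in $\MPeakP\llbracket t\rrbracket$; since everything is graded, each coproduct is computed in a single degree, so no convergence issue arises beyond the formal variable $t$. Taking $[t^n]$ of the claimed closed forms, the three claims read $\Delta(O) = O\otimes O$, $\ \Delta(T) = 1\otimes T + (T-1)\otimes O$, and $\Delta(T) = T\otimes 1 + O\otimes(T-1)$ (here the isolated $1$ denotes $\tpeakR_0$); that is, $O$ is grouplike. Since $\MPeakP$ is the Hopf algebra dual to $\mQSymQ$ via $[\cdot,\cdot]$ by Theorem~\ref{mpeakp-thm}, its coproduct is adjoint to the product of $\mQSymQ$, so each identity is equivalent, via nondegeneracy, to a statement about the two linear functionals $\mu_t(f):=[T,f]$ and $\nu_t(f):=[O,f]$ on $\mQSymQ$: namely $\Delta(O)=O\otimes O$ holds iff $\nu_t$ is a unital algebra morphism, while the two formulas for $\Delta(T)$ hold iff $\mu_t(fh) = \epsilon(f)\mu_t(h) + (\mu_t(f)-\epsilon(f))\nu_t(h)$ and $\mu_t(fh) = \mu_t(f)\epsilon(h) + \nu_t(f)(\mu_t(h)-\epsilon(h))$ for all $f,h\in\mQSymQ$, where I use that the pairing of the unit $\tpeakR_0$ against $\mQSymQ$ is the counit $\epsilon$.

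First I would evaluate both functionals on the pseudobasis $\{K^{(\beta)}_\gamma\}$. From $[\tpeakR_\alpha, K^{(\beta)}_\gamma]=\delta_{\alpha\gamma}$ one reads off $\mu_t(K^{(\beta)}_\gamma) = t^{|\gamma|}$ when $\ell(\gamma)\le 1$ and $0$ otherwise. For $\nu_t$ I would expand $\opeakR_n$ through \eqref{opeak-Rn} (alternatively, invoke the computation of $\zetaq$ on $K^{(\beta)}_\gamma$ already performed in the proof of Theorem~\ref{peak-tl-thm}), which gives $\nu_t(K^{(\beta)}_\gamma) = t^{|\gamma|}(2+\beta t)$ for nonempty single-part $\gamma$, $\nu_t(K^{(\beta)}_\emptyset)=1$, and $0$ when $\ell(\gamma)\ge 2$. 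Comparing with the value of $\zetaq$ on $\mQSymQ$ recorded in Theorem~\ref{peak-tl-thm}, this shows $\nu_t = \zetaq|_{\mQSymQ}$; equivalently, writing $c := 2+\beta t$, one has $\zetaq = c\,\mu_t - (c-1)\epsilon$ on $\mQSymQ$, i.e. $\mu_t = \tfrac1c\bigl(\zetaq + (c-1)\epsilon\bigr)$.

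Granting these two facts, the rest is purely formal. Because $\nu_t = \zetaq$ is the canonical zeta function of $\mQSym$ restricted to the subalgebra $\mQSymQ$, it is a unital algebra morphism, so $O$ is grouplike and the formula for $\Delta(\opeakR_n)$ follows. For the two formulas for $\Delta(T)$ I would substitute $\mu_t = \tfrac1c(\zetaq + (c-1)\epsilon)$ and $\nu_t = \zetaq$ into the bilinear identities above; using that $\zetaq$ and $\epsilon$ are both algebra morphisms, each right-hand side simplifies to $\tfrac1c\zetaq(f)\zetaq(h) + \tfrac{c-1}{c}\epsilon(f)\epsilon(h)$, which is exactly $\mu_t(fh)$, so both identities hold. (As a consistency check one notes that, given the change of basis $O = cT - (1+\beta t)$ coming from \eqref{opeak-Rn}, the three coproduct identities are mutually equivalent, so proving any one of them suffices.)

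The main obstacle is the functional computation in the second paragraph — specifically the identification $\nu_t = \zetaq$, where the empty composition is a genuine boundary case: the clean proportionality $\zetaq = c\,\mu_t$ fails at $K^{(\beta)}_\emptyset$ and must be corrected by the $-(c-1)\epsilon$ term, and it is precisely this correction that makes $O$ rather than $T$ grouplike. Everything else is routine bookkeeping: verifying that pairing $\tpeakR_0$ against $\mQSymQ$ yields $\epsilon$, that $O\in\MPeakQ$ may be viewed inside $\MPeakP$ when pairing against $\mQSymQ$, and that passage to generating series in $t$ is harmless by gradedness.
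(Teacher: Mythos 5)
Your proof is correct, but it takes a genuinely different route from the paper's. The paper works on the other side of the duality: using the explicit shuffle-type product formula of Proposition~\ref{K-prod-prop}, it computes $[\Delta(\tpeakR_n), K^{(\beta)}_{\alpha'}\otimes K^{(\beta)}_{\alpha''}] = [\tpeakR_n, K^{(\beta)}_{\alpha'}K^{(\beta)}_{\alpha''}]$ by classifying the words $w$ with $\PeakSet(w)=\varnothing$ that can arise in that product (the three insertions of $u$ around the minimal letter of $v$), obtaining the raw expansion $\Delta(\tpeakR_n) = 1\otimes\tpeakR_n + \tpeakR_n\otimes 1 + 2\sum_{i=1}^{n-1}\tpeakR_i\otimes\tpeakR_{n-i} + \beta\sum_{i=1}^{n-2}\tpeakR_i\otimes\tpeakR_{n-i-1}$, and only then converts to the stated forms and to $\Delta(\opeakR_n)$ via \eqref{opeak-Rn}. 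You instead dualize once and for all, check on the pseudobasis that $[O,\cdot]$ coincides with $\zetaq|_{\mQSymQ}$ (an evaluation the paper has already carried out inside the proof of Theorem~\ref{peak-tl-thm}), and then get grouplikeness of $O$ for free from multiplicativity of the evaluation morphism $\zetaq$, with both formulas for $\Delta(T)$ following formally from the linear relation $\mu_t = \tfrac1c\bigl(\zetaq + (c-1)\epsilon\bigr)$. Your route avoids the word combinatorics entirely and buys a conceptual explanation the paper's computation does not make visible: it is exactly the counit correction at $K^{(\beta)}_\emptyset$ that forces $O$, rather than $T$, to be the grouplike series; the paper's route, in exchange, stays self-contained within its shuffle formalism and produces the symmetric raw expansion directly. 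Two minor points worth recording: inverting $c = 2+\beta t$ requires passing to $\QQ[\beta]\llbracket t\rrbracket$, which is harmless because all pairing values lie in the torsion-free ring $\ZZ[\beta]\llbracket t\rrbracket$, so an identity verified after extending scalars holds over $\ZZ[\beta]$; and your appeal to gradedness is unnecessary (and slightly off if one sets $\deg\beta=0$, since $\Delta(\tpeakR_n)$ is then inhomogeneous) --- each $t^n$-coefficient of your series identities is a single well-defined element of $\MPeakP\otimes\MPeakP$, paired against the separating family $\{K^{(\beta)}_{\alpha'}\otimes K^{(\beta)}_{\alpha''}\}$, so the generating-series bookkeeping needs no convergence or grading argument at all.
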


\begin{proof}
Let $\alpha'$ and $\alpha''$ be peak compositions
and choose
 $u \in \Sm_p$ and $v \in \Sm_q$ with $\cD(u) = \alpha'$ and $\cD(v) =\alpha''$.
 Keeping in mind the product formula in Proposition~\ref{K-prod-prop},
suppose $w \in \Sm_{p+q}$ has $u \leq_\m w \cap [p]$ and $v \uparrow p \leq_\m w \cap (p + [q])$.
Write $m:=\ell(v) = |\alpha''|$.
The only way that  we can have $\PeakSet(w) = \varnothing$ is
if  $\PeakSet(u)=\PeakSet(v) = \varnothing$ and
$w$ is either $\tilde v_1 \cdots \tilde v_i \cdot u\cdot \tilde v_{i+1} \cdots \tilde v_m$ or $\tilde v_1 \cdots \tilde v_{i-1} \cdot u\cdot  \tilde v_{i} \cdots \tilde v_m$ or $\tilde v_1 \cdots \tilde v_i \cdot u\cdot  \tilde v_{i} \cdots \tilde v_m$
where $i \in [m]$ is the index of the   smallest letter of $v$ and $\tilde v_j := v_j+p$.

Thus by Proposition~\ref{K-prod-prop} we have
 $
[\Delta(\tpeakR_n), K^{(\beta)}_{\alpha'} \otimes K^{(\beta)}_{\alpha''}] = 
[\tpeakR_n, K^{(\beta)}_{\alpha'} K^{(\beta)}_{\alpha''}  ] 
 \in \{0,1,2,\beta\}$.
Specifically, the value of the form is zero if $\ell(\alpha') > 1$ or $\ell(\alpha'')>1$
as then $\PeakSet(u)$ or $ \PeakSet(v)$ is nonempty.
If this does not occur, then the value  of the form
is $\ell(\alpha') + \ell(\alpha'')$ when $n= |\alpha'| + |\alpha''|$,
or
$\beta$ when $\ell(\alpha') = \ell(\alpha'')=1$ and $n = |\alpha'| + |\alpha''| + 1$,
or else zero.
We conclude by the definition of $ [\cdot,\cdot ]$ that 
$ \Delta(\tpeakR_n) =
1\otimes \tpeakR_{n} +  \tpeakR_{n}\otimes 1 + 
 2\sum_{i=1}^{n-1} \tpeakR_i \otimes \tpeakR_{n-i} + \beta \sum_{i=1}^{n-2} \tpeakR_i \otimes \tpeakR_{n-i-1}$.
 This identity is equivalent to the displayed equation for $ \Delta(\tpeakR_n)$ via \eqref{opeak-Rn}.
 It follows that 
 \[\Delta(\opeakR_1) = 2\cdot \Delta(\tpeakR_1)  = 1\otimes \opeakR_{n} +  \opeakR_{n}\otimes 1 
\quand \Delta(\opeakR_n) =2\cdot \Delta( \tpeakR_n) + \beta\cdot  \Delta(\tpeakR_{n-1}) \]
when $n>1$.
The expression on the right expands to 
\[
2\otimes \tpeakR_{n} +  \tpeakR_{n}\otimes 2 + \beta\otimes \tpeakR_{n-1} +   \tpeakR_{n-1}\otimes \beta + 
 2\sum_{i=1}^{n-1} \tpeakR_i \otimes \opeakR_{n-i} +  \beta\sum_{i=1}^{n-2} \tpeakR_i \otimes \opeakR_{n-i - 1}
 \]
 and one can check that this is equal to $ \sum_{i=0}^{n} \opeakR_i \otimes \opeakR_{n-i} $.
%
%
%
 \end{proof}
 
There is no $Q$-version of Proposition~\ref{MPeak-free-prop}.
 Over $\ZZ[\beta]$, the set $\{\opeakR_n: n=1,3,5,\dots\}$
generates a proper subalgebra of $\MPeakQ$
which contains $ 2\cdot \opeakR_n$ but not $\opeakR_n$ for even $n \in \PP$.  
This set does freely generate  $\QQ[\beta]\otimes_{\ZZ[\beta]}\MPeakP=\QQ[\beta]\otimes_{\ZZ[\beta]}\MPeakQ$
as a $\QQ[\beta]$-algebra.

The classical peak algebra is also freely generated by a countable set \cite[Thm.~3]{Schocker},
so after an appropriate extension scalars 
it is isomorphic to $\QQ[\beta]\otimes_{\ZZ[\beta]}\MPeakP=\QQ[\beta]\otimes_{\ZZ[\beta]}\MPeakQ$
as an algebra.
By duality, the tensor product  $\QQ[\beta]\otimes_{\ZZ[\beta]}\mQSymQ= \QQ[\beta]\otimes_{\ZZ[\beta]}\mQSymP$ 
is isomorphic as a coalgebra to the completion of the peak quasisymmetric functions
 $\mathbf{\Pi}_\QQ$ from
 \cite{BMSW,Stembridge1997a}.

The (co)algebra isomorphisms that come from these observations
 do not extend to isomorphisms of Hopf algebras.
This is different from the unshifted case \eqref{k-diagram}, 
 where both  $ L^{(\beta)}_{\alpha}$
 and $ L_\alpha := L^{(0)}_{\alpha}$ span $\mQSym$, 
 and we have $\MNSym \cong \NSym$ as Hopf algebras (after an appropriate extension of scalars) \cite[Prop.~8.5]{LamPyl}.
In principle, our shifted objects might still be isomorphic to their $\beta=0$ specializations by some other maps;
determining whether such maps exist is an open problem.
  
\subsection{Shifted symmetric functions}\label{shifted-sym-sect}

Finally, we turn to the shifted analogues of symmetric functions that arise in $K$-theory.
Recall that a partition is strict if its nonzero parts are all distinct.
Choose strict partitions $\mu \subseteq \lambda$ and write $
\SD_{\lambda/\mu} := \{(i,i+j-1) \in \PP\times \PP : \mu_i< j \leq \lambda_i\}$
for the \defn{shifted Young diagram}. We often refer to the positions in this diagram as \defn{boxes}.
A \defn{shifted set-valued tableau} of shape $\lambda/\mu$
is a map $T $ assigning nonempty finite subsets of 
$\{1'<1<2'<2<\dots\}$ to the boxes in $\SD_{\lambda/\mu}$.
We write $(i,j) \in T$ when $(i,j) \in \SD_{\lambda/\mu}$
and let $T_{ij}$ denote the set assigned by $T$ to box $(i,j)$.

A shifted set-valued tableau has \defn{weakly increasing rows and columns} if 
$
\max(T_{ij}) \leq \min(T_{i+1,j})
$
and
$\max(T_{ij}) \leq \min(T_{i,j+1})$
for all relevant $(i,j)\in T$.
A shifted set-valued tableau $T$ with this property is \defn{semistandard} if 
no primed number occurs in multiple boxes of $T$ in the same row and 
no unprimed number occurs in multiple boxes of $T$ in the same column.
For example, 
\[
\ytableausetup{boxsize=0.65cm,aligntableaux=center}
 \begin{ytableau}
\none & \none & 345 & \none\\
\none &  12  & 23' & 7 &  \none\\
\none[\cdot] & \none[\cdot] & 1' & 2'3
\end{ytableau}
\quand
\begin{ytableau}
\none & \none & 3'4 & \none\\
\none &  2' 2  & 3' &  7'& \none\\
\none[\cdot] & \none[\cdot]  & 1'1& 235
\end{ytableau}
\]
are semistandard shifted set-valued tableaux of shape $(4,3,1)/(2)$ drawn in French notation.
Given such a tableau $T$, we let $ |T| := \sum_{(i,j) \in T} |T_{ij}|$
and
$ x^T := \prod_{(i,j) \in T} \prod_{k \in T_{ij}} x_{\lceil k\rceil }$.
Both of our examples have $|T| = 11$ and $x^T = x_1^2 x_2^3 x_3^3 x_4x_5 x_7$.
Also set 
 $|\lambda/\mu| := |\SD_{\lambda/\mu|}$.
The following definitions originate in work of Ikeda and Naruse \cite[\S9]{IkedaNaruse}:

\begin{definition}
Let $\ShSetTab_Q(\lambda/\mu)$ denote the set of all semistandard shifted set-valued tableaux of shape $\lambda/\mu$,
and let $\ShSetTab_P(\lambda/\mu)$ be the subset of such tableaux with no primed numbers in diagonal boxes.
The \defn{$K$-theoretic Schur $P$- and $Q$-functions}   are the formal power series
\[
\bGP_{\lambda/\mu} := \sum_{T \in \ShSetTab_P(\lambda/\mu)} \beta^{|T|-|\lambda/\mu|} x^T
\quand
\bGQ_{\lambda/\mu} := \sum_{T\in \ShSetTab_Q(\lambda/\mu)} \beta^{|T|-|\lambda/\mu|} x^T. 
\]
When $\mu=\emptyset$ is the empty partition
we write 
$\bGP_{\lambda} := \bGP_{\lambda/\emptyset}$ and $\bGQ_{\lambda} := \bGQ_{\lambda/\emptyset}.$
\end{definition}

If  $\deg(\beta) =0$ and $ \deg(x_i)=1$, then  $\bGP_{\lambda/\mu}$ and $\bGQ_{\lambda/\mu}$
have unbounded degree, but their lowest degree terms 
are the Schur $P$- and Schur $Q$-functions $P_{\lambda/\mu}$ and $Q_{\lambda/\mu} = 2^{\ell(\lambda)-\ell(\mu)} P_{\lambda/\mu}$. 
As $\{P_\lambda\}$ and $\{Q_\lambda\}$ are bases for   subalgebras of $\MSym$,
the sets $\{\bGP_\lambda\}$ and $\{\bGQ_\lambda\}$ are linearly independent.

\begin{definition}
Define $\mSymP $ and $\mSymQ $
to be the  linearly compact $\ZZ[\beta]$-modules
with the sets $\{\bGP_\lambda\}$ and $\{\bGQ_\lambda\}$ 
(where $\lambda$ ranges over all strict partitions)
as respective pseudobases.
\end{definition}

If  one sets $\deg(\beta) =-1$ and $ \deg(x_i)=1$
then $\bGP_{\lambda/\mu}$
and
$\bGQ_{\lambda/\mu}$ are homogeneous of degree $|\lambda/\mu|$. 
Both $\mSymP $ and $\mSymQ $ are LC-Hopf subalgebras of $\mSym$ \cite[Thm.~5.11]{LM2019}
and if 
$\mu \subseteq \lambda$ are strict partitions
then
$\bGP_{\lambda/\mu}\in \mSymP$
and
$\bGQ_{\lambda/\mu} \in \mSymQ$ \cite[Cor.~5.13]{LM2019}.

It remains to identify the dual Hopf algebras $\MSymP $ and $\MSymQ $ in \eqref{shk-diagram}.
Continue to assume $\mu \subseteq \lambda$ are strict partitions.
A \defn{shifted reverse plane partition} of shape $\lambda/\mu$
is an assignment of numbers from $\{1'<1<2'<2<\dots\}$ to the boxes in $\SD_{\lambda/\mu}$
such that 
rows and columns are weakly increasing.
If $T$ is a shifted reverse plane partition, then we let
\[\wtmrpp(T) := (a_1+b_1, a_2+b_2,\dots)
\quand
|\wtmrpp(T) | := a_1+b_1+ a_2+b_2 + \dots\]
where
 $a_i$ is the number of distinct columns of $T$ containing $i$
and $b_i$ is the number of distinct rows of $T$ containing  $i'$.
For example, if $\lambda = (4,3,2)$ and $\mu=(2)$ then $T$ could be either of
\[
\ytableausetup{boxsize=0.45cm,aligntableaux=center}
 \begin{ytableau}
\none & \none & 5' & 5'\\
\none &  3  & 3 & 5'\\
\none[\cdot] & \none[\cdot] & 1' & 1'
\end{ytableau}
\quand
\begin{ytableau}
\none & \none & 5 & 5\\
\none &  3'  & 3' & 3\\
\none[\cdot] & \none[\cdot] & 1 & 3
\end{ytableau}
\]
and we would have $\wtmrpp(T) = (1,0,2,0,2)$ and $|\wtmrpp(T)| = 5$ in both cases.

\begin{definition}\label{bgpq-def}
Let $\MRPP_Q(\lambda/\mu)$ be the set of
shifted reverse plane partitions of shape $\lambda/\mu$,
and let  
$\MRPP_P(\lambda/\mu)$ 
be the subset of  $T \in \MRPP_Q(\lambda/\mu)$
whose diagonal entries are all primed.
The  \defn{dual $K$-theoretic Schur $P$- and $Q$-functions}
are
\[
\bgp_{\lambda/\mu} := \sum_{T\in \MRPP_P(\lambda/\mu)} (-\beta)^{|\lambda/\mu| - |\wtmrpp(T)|} x^{\wtmrpp(T)} 
\]
and
\[
\bgq_{\lambda/\mu} := \sum_{T\in \MRPP_Q(\lambda/\mu)} (-\beta)^{|\lambda/\mu| -  |\wtmrpp(T)|} x^{\wtmrpp(T)} .
\]
When $\mu=\emptyset$ is the empty partition
we write 
$\bgp_{\lambda} := \bgp_{\lambda/\emptyset}$ and $\bgq_{\lambda} := \bgq_{\lambda/\emptyset}.$ We will adopt a similar convention for all later notations indexed by skew shapes $\lambda/\mu$.
\end{definition}

By \cite[Thm.~1.4]{LM2022}, $\bgp_{\lambda}$
and
$\bgq_{\lambda}$
are special cases of the \defn{dual universal factorial Schur $P$- and $Q$-functions}
that Nakagawa and Naruse characterize in \cite[Def.~3.2]{NN2018} via a general Cauchy identity.
The skew versions $\bgp_{\lambda/\mu} $ and $\bgq_{\lambda/\mu} $
of these functions were first considered in \cite[\S6]{ChiuMarberg}.

If  we set  $\deg(\beta) = \deg(x_i)=1$, then  $\bgp_{\lambda/\mu}$ and $\bgq_{\lambda/\mu}$ 
are both homogeneous of (bounded) degree $|\lambda/\mu|$.
If instead  $\deg(\beta) = 0$ and $\deg(x_i)=1$, then the
terms of highest degree in $\bgp_{\lambda/\mu}$ and $\bgq_{\lambda/\mu}$
are $P_{\lambda/\mu}$ and $Q_{\lambda/\mu} $, so
$\{\bgp_\lambda\}$ and $\{\bgq_\lambda\}$ are linearly independent over $\ZZ[\beta]$.

\begin{definition}
Define $\MSymP $ and $\MSymQ $
to be the free $\ZZ[\beta]$-modules
with the sets $\{\bgp_\lambda\}$ and $\{\bgq_\lambda\}$ 
(where $\lambda$ ranges over all strict partitions)
as respective bases.
\end{definition}

The functions $\bgp_{\lambda/\mu}$
and
$\bgq_{\lambda/\mu}$ satisfy
the following Cauchy identity.
Let $\mathbf x = (x_1,x_2,\dots)$ and $\mathbf y = (y_1,y_2,\dots)$ be  commuting variables
and set $\overline{x}_i = \frac{-x_i}{1+\beta x_i}$. Then
\be\label{cauchy-eq}
\ds\sum_{\lambda} \bGP_{\lambda}(\mathbf x) \bgq_{\lambda}(\mathbf y) =
\sum_{\lambda} \bGQ_{\lambda}(\mathbf x) \bgp_{\lambda}(\mathbf y)=
 \prod_{i,j\geq 1} \tfrac{1-\overline{x}_i y_j}{1-x_iy_j} 
 \ee
 by \cite[Thm.~1.4]{LM2022} via \cite[Conj. 5.1]{NN2018}.\footnote{\cite[Conj. 5.1]{NN2018} asserts that the power series
 $\bgq_{\lambda}$ and $ \bgp_{\lambda}$ \emph{defined} by \eqref{cauchy-eq} have the generating function formulas in Definition~\ref{bgpq-def},
 and
  \cite[Thm.~1.4]{LM2022} proves this conjecture.}
Since  the right hand side of \eqref{cauchy-eq} is invariant under permutations of the $\bf y$ variables,
the power series $\bgp_{\lambda} $ and $ \bgq_{\lambda}$ are symmetric.
This implies that 
\be\label{gp-gq-xy-eq}
\bgp_{\lambda} ({\bf x},{\bf y}) = \sum_{\mu} \bgp_\mu({\bf x}) \bgp_{\lambda/ \mu} ({\bf y})
\quand
\bgq_{\lambda} ({\bf x},{\bf y}) = \sum_{\mu} \bgq_\mu({\bf x}) \bgq_{\lambda/ \mu} ({\bf y}),
\ee
where $f({\bf x},{\bf y})$ denotes the power series $f(x_1,y_1,x_2,y_2,\dots)$ for $f \in \ZZ[\beta]\llbracket x_1,x_2,\dots\rrbracket $.
The sums here are over all strict partitions $\mu$, setting $\bgp_{\lambda/\mu} = \bgq_{\lambda/\mu}=0$
when $\mu\not\subseteq \lambda$.
Both sides of \eqref{gp-gq-xy-eq} are symmetric under all permutations of the $\bf y$ variables,
so  $\bgp_{\lambda/ \mu} $ and $\bgq_{\lambda/ \mu} $ are also   symmetric.

%
%
%
%

\begin{remark}\label{coproduct-rmk}
The continuous $\ZZ[\beta]$-linear map with $f({\bf x})g({\bf y}) \mapsto 
f \otimes g$
for all $f,g \in \ZZ[x_1,x_2,\dots]$
is a bijection $\ZZ[\beta]\llbracket x_1,y_1,x_2,y_2,\dots\rrbracket  \xrightarrow{\sim} \ZZ[\beta]\llbracket x_1,x_2,\dots\rrbracket \htimes \ZZ[\beta]\llbracket x_1,x_2,\dots\rrbracket $.
Composing this map
with $f \mapsto f({\bf x},{\bf y})$
gives an
operation $\ZZ[\beta]\llbracket x_1,x_2,\dots\rrbracket  \to \ZZ[\beta]\llbracket x_1,x_2,\dots\rrbracket \htimes \ZZ[\beta]\llbracket x_1,x_2,\dots\rrbracket $
which  restricts to the coproduct of $\mSym$ and $\MSym$ \cite[\S2.1]{GrinbergReiner}.
Thus we can rewrite  \eqref{gp-gq-xy-eq} as
\be\label{gp-gq-xy-eq2}
\Delta(\bgp_{\lambda}) = \sum_{\mu} \bgp_\mu \otimes \bgp_{\lambda/ \mu} 
\quand
\Delta(\bgq_{\lambda}) = \sum_{\mu} \bgq_\mu \otimes \bgq_{\lambda/ \mu} 
\ee
where the sums are over all strict partitions.
\end{remark}

\subsection{Finite expansions and duality}\label{pq-sect2}

To identify the algebraic structure of $\MSymP $ and $\MSymQ $ we need a short digression.
Suppose $\lambda\subseteq \nu$ are strict partitions. 
A box $(i,j) \in \SD_\lambda$ is a \defn{removable corner} of $\lambda$ if $\SD_\lambda - \{(i,j)\} = \SD_\mu$
for a strict partition $\mu \subsetneq \lambda$.
Let $\RC(\lambda)$ be the set of all such boxes
and define
\be\label{ss-eq}
\bGP_{\nu\ss \lambda} := \sum_{\substack{\mu \subseteq \lambda \\ \SD_{\lambda/\mu}\subseteq \RC(\lambda)}} \beta^{|\lambda|-|\mu|} \bGP_{\nu/\mu}
\quand
\bGQ_{\nu\ss \lambda} := \sum_{\substack{\mu \subseteq \lambda \\ \SD_{\lambda/\mu}\subseteq \RC(\lambda)}} \beta^{|\lambda|-|\mu|}\bGQ_{\nu/\mu}.
\ee
For strict partitions $\lambda\not\subseteq\nu$ set $\bGP_{\nu\ss \lambda} = \bGQ_{\nu\ss \lambda} :=0$. 
These functions arise in the identities
\be 
\bGP_\nu({\bf x},{\bf y}) = \sum_{\lambda} \bGP_\lambda({\bf x}) \bGP_{\nu\ss \lambda} ({\bf y})
\quand
\bGQ_\nu({\bf x},{\bf y}) = \sum_{\lambda} \bGQ_\lambda({\bf x}) \bGQ_{\nu\ss \lambda} ({\bf y})
\ee
which by Remark~\ref{coproduct-rmk} can be restated as the coproduct formulas
\be \label{Delta-eq}
\Delta(\bGP_\nu) = \sum_{\lambda} \bGP_\lambda \otimes \bGP_{\nu\ss \lambda} 
\quand
\Delta(\bGQ_\nu) = \sum_{\lambda} \bGQ_\lambda\otimes \bGQ_{\nu\ss \lambda}.
\ee
The sums here are over all strict partitions $\lambda$, but the terms indexed by $\lambda\not\subseteq \nu$ are all zero.


Because  $\mSymP $ and $\mSymQ $ are LC-Hopf algebras, and because 
the pseudobases $\{\bGP_\lambda\}$ and $\{\bGQ_\lambda\}$
 consist of homogeneous elements if we set $\deg(\beta) = -1$, there are unique integers 
$a_{\lambda\mu}^\nu, b_{\lambda\mu}^\nu,
\widehat a_{\lambda\mu}^\nu, \widehat b_{\lambda\mu}^\nu \in \ZZ$ indexed by strict partitions $\lambda$, $\mu$, $\nu$ such that 
\be\label{ab-eq}
\ba
\bGP_\lambda \bGP_\mu  &= \sum_\nu a_{\lambda\mu}^\nu \beta^{|\nu|-|\lambda|-|\mu|} \bGP_\nu\\
\bGQ_\lambda \bGQ_\mu  &= \sum_\nu b_{\lambda\mu}^\nu \beta^{|\nu|-|\lambda|-|\mu|} \bGQ_\nu
\ea
\quand
\ba
\bGP_{\nu\ss\lambda}&=\sum_\mu  \widehat b_{\lambda\mu}^\nu \beta^{|\lambda| + |\mu| - |\nu|} \bGP_\mu
\\
\bGQ_{\nu\ss\lambda}&=\sum_\mu  \widehat a_{\lambda\mu}^\nu \beta^{|\lambda| + |\mu| - |\nu|} \bGQ_\mu.
\ea
\ee
The following result is a special case of \cite[Prop.~3.2]{NN2018}
since $\bgp_\lambda$ and $\bgq_\lambda$ are special cases of \cite[Def.~3.2]{NN2018}.
We outline a self-contained proof for completeness.

\begin{proposition}[{\cite[Prop.~3.2]{NN2018}}] For all strict partitions $\lambda$, $\mu$, $\nu$ it holds that
\label{abcd-prop}
\[
\ba
\bgp_\lambda \bgp_\mu  &= \sum_\nu \widehat a_{\lambda\mu}^\nu \beta^{|\lambda|+|\mu|-|\nu|} \bgp_\nu
\\
\bgq_\lambda \bgq_\mu  &= \sum_\nu \widehat b_{\lambda\mu}^\nu \beta^{|\lambda| + |\mu| - |\nu|} \bgq_\nu
\ea
\quand
\ba
\bgp_{\nu/\lambda}&=\sum_\mu   b_{\lambda\mu}^\nu \beta^{ |\nu|-|\lambda| - |\mu|} \bgp_\mu
\\
\bgq_{\nu/\lambda}&=\sum_\mu   a_{\lambda\mu}^\nu \beta^{ |\nu|-|\lambda| - |\mu|} \bgq_\mu.
\ea
\]
\end{proposition}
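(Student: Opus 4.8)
Throughout write $\Omega(\mathbf x;\mathbf y) := \prod_{i,j\geq 1}\tfrac{1-\overline x_i y_j}{1-x_iy_j}$ for the common value of the two sums in \eqref{cauchy-eq}, with $\overline x_i = \tfrac{-x_i}{1+\beta x_i}$. The plan is to run the classical ``Cauchy kernel'' argument, extracting each identity by splitting one of the two variable alphabets and comparing coefficients. The only property of $\Omega$ that is needed is that it is multiplicative in each alphabet: since $\overline x_i$ depends only on $x_i$ and the defining product factors over the indices, one has $\Omega(\mathbf x;(\mathbf y,\mathbf z)) = \Omega(\mathbf x;\mathbf y)\,\Omega(\mathbf x;\mathbf z)$ and $\Omega((\mathbf x,\mathbf w);\mathbf y) = \Omega(\mathbf x;\mathbf y)\,\Omega(\mathbf w;\mathbf y)$. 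All coefficient comparisons below are justified because $\{\bGP_\lambda\}$ and $\{\bGQ_\lambda\}$ are pseudobases and $\{\bgp_\lambda\}$ and $\{\bgq_\lambda\}$ are bases, so each of these families is linearly independent.

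For the product $\bgp_\lambda\bgp_\mu$, I would split the first alphabet in the $\bGQ$--$\bgp$ Cauchy identity $\sum_\nu \bGQ_\nu(\mathbf x)\bgp_\nu(\mathbf y)=\Omega(\mathbf x;\mathbf y)$. Expanding the left side of $\sum_\nu \bGQ_\nu(\mathbf x,\mathbf w)\,\bgp_\nu(\mathbf y) = \Omega(\mathbf x;\mathbf y)\,\Omega(\mathbf w;\mathbf y)$ using the coproduct $\bGQ_\nu(\mathbf x,\mathbf w)=\sum_\lambda \bGQ_\lambda(\mathbf x)\bGQ_{\nu\ss\lambda}(\mathbf w)$ from \eqref{Delta-eq}, and the right side using \eqref{cauchy-eq} twice, gives $\sum_{\nu,\lambda}\bgp_\nu(\mathbf y)\bGQ_\lambda(\mathbf x)\bGQ_{\nu\ss\lambda}(\mathbf w)=\sum_{\lambda,\mu}\bGQ_\lambda(\mathbf x)\bGQ_\mu(\mathbf w)\bgp_\lambda(\mathbf y)\bgp_\mu(\mathbf y)$. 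Comparing coefficients of $\bGQ_\lambda(\mathbf x)$, then substituting $\bGQ_{\nu\ss\lambda}=\sum_\mu \widehat a^\nu_{\lambda\mu}\beta^{|\lambda|+|\mu|-|\nu|}\bGQ_\mu$ from \eqref{ab-eq}, and finally comparing coefficients of $\bGQ_\mu(\mathbf w)$, yields $\bgp_\lambda\bgp_\mu=\sum_\nu \widehat a^\nu_{\lambda\mu}\beta^{|\lambda|+|\mu|-|\nu|}\bgp_\nu$, the first asserted formula. The identity for $\bgq_\lambda\bgq_\mu$ is obtained verbatim from the $\bGP$--$\bgq$ Cauchy identity, using the coproduct of $\bGP$ in \eqref{Delta-eq} and the expansion of $\bGP_{\nu\ss\lambda}$ into $\widehat b^\nu_{\lambda\mu}$'s.

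For the skew expansions I would instead split the second alphabet. Expanding the left side of $\sum_\nu \bGQ_\nu(\mathbf x)\bgp_\nu(\mathbf y,\mathbf z)=\Omega(\mathbf x;\mathbf y)\,\Omega(\mathbf x;\mathbf z)$ using $\bgp_\nu(\mathbf y,\mathbf z)=\sum_\lambda \bgp_\lambda(\mathbf y)\bgp_{\nu/\lambda}(\mathbf z)$ from \eqref{gp-gq-xy-eq2}, and the right side using \eqref{cauchy-eq} twice, gives $\sum_{\nu,\lambda}\bGQ_\nu(\mathbf x)\bgp_\lambda(\mathbf y)\bgp_{\nu/\lambda}(\mathbf z)=\sum_{\lambda,\mu}\bGQ_\lambda(\mathbf x)\bGQ_\mu(\mathbf x)\bgp_\lambda(\mathbf y)\bgp_\mu(\mathbf z)$. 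Comparing coefficients of $\bgp_\lambda(\mathbf y)$, then substituting $\bGQ_\lambda\bGQ_\mu=\sum_\nu b^\nu_{\lambda\mu}\beta^{|\nu|-|\lambda|-|\mu|}\bGQ_\nu$ from \eqref{ab-eq} and comparing coefficients of $\bGQ_\nu(\mathbf x)$, gives $\bgp_{\nu/\lambda}=\sum_\mu b^\nu_{\lambda\mu}\beta^{|\nu|-|\lambda|-|\mu|}\bgp_\mu$; the $\bgq_{\nu/\lambda}$ formula follows the same way from the $\bGP$--$\bgq$ Cauchy identity and the product of $\bGP$, producing the coefficients $a^\nu_{\lambda\mu}$.

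I expect the main obstacle to be bookkeeping rather than anything deep: one must keep straight which of the two Cauchy identities in \eqref{cauchy-eq} is being used, since the $\bGP$--$\bgq$ pairing produces the constants $\widehat b$ and $a$ while the $\bGQ$--$\bgp$ pairing produces $\widehat a$ and $b$, which is exactly the $P\leftrightarrow Q$ interchange visible in the statement. One must also check that the rearrangements are legitimate in the completed power series setting; this holds because each $\bGQ_{\nu\ss\lambda}$ and each product $\bGQ_\lambda\bGQ_\mu$ is a genuinely finite $\ZZ[\beta]$-linear combination by the degree conventions following \eqref{ab-eq}, so every coefficient comparison against the linearly independent families is valid.
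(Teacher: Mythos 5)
Your proof is correct and follows essentially the same route as the paper, which likewise introduces a third alphabet, uses the multiplicativity of the Cauchy kernel from \eqref{cauchy-eq}, expands via \eqref{Delta-eq}, \eqref{gp-gq-xy-eq}, and \eqref{ab-eq}, and extracts coefficients against the linearly independent families. The paper only works out the $\widehat a_{\lambda\mu}^\nu$ case explicitly and leaves the rest as analogous, so your version is just a more complete write-up of the same argument, with the $P\leftrightarrow Q$ bookkeeping handled correctly.
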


\begin{proof}
One can derive these identities
from \eqref{cauchy-eq}
by introducing a third sequence of variables ${\mathbf z} = (z_1,z_2,\dots)$   
and then extracting coefficients.
For example, we have
$
\sum_\nu \bGQ_\nu({\bf x},{\bf y})  \bgp_\nu({\bf z})  =  \prod_{i,j} \tfrac{1-\overline{x}_i z_j}{1-x_iz_j}    \prod_{i,j} \tfrac{1-\overline{y}_i z_j}{1-y_iz_j} $.
The left side is
$\sum_{\lambda,\mu,\nu}  \widehat a_{\lambda\mu}^\nu \beta^{|\lambda| + |\mu| - |\nu|} \bGQ_\lambda({\bf x}) \bGQ_\mu({\bf y}) \bgp_\nu({\bf z})$
while the right side is 
$ \sum_{\lambda,\mu} \bGQ_\lambda({\bf x})  \bGQ_\mu({\bf y})  \bgp_\lambda ({\bf z}) \bgp_\mu({\bf z})$,
which leads to the first formula.
\end{proof}

Let $\ell(\lambda)$ be the number of parts in a partition $\lambda$.
Given strict partitions $\mu \subseteq \lambda$, define $\columns(\lambda/\mu) = |\{ j : (i,j) \in \SD_{\lambda/\mu}\}|$ to be the number of columns occupied by   $\SD_{\lambda/\mu}$.
A subset of $\PP\times \PP$ is a \defn{vertical strip} if it contains at most one position in each row.
Then it holds by \cite[Thm.~1.1]{ChiuMarberg}
that 
 \be\label{GQ-to-GP-eq}
 \bGQ_\mu =  \sum_{\lambda}  2^{\ell(\mu)}  (-1)^{\columns(\lambda/\mu)}  (-\beta/2)^{|\lambda/\mu|}  \bGP_\lambda
 \ee
 and by  \cite[Cor.~6.2]{ChiuMarberg} that
 \be\label{gq-to-gp-eq}
 \bgq_\lambda =   \sum_{\mu} 2^{\ell(\mu)}  (-1)^{\columns(\lambda/\mu)}  (-\beta/2)^{|\lambda/\mu|}  \bgp_\mu
\ee
where both sums are over strict partitions $\lambda\supseteq \mu$ 
with $\ell(\lambda) =\ell(\mu)$ such that $\SD_{\lambda/\mu}$ is a vertical strip.
For example
$\bGQ_{(3,2)} = 4 \bGP_{(3,2)} + 2\beta   \bGP_{(4,2)} - \beta^2  \bGP_{(4,3)}$
and 
$\bgq_{(3,2)} = 4 \bgp_{(3,2)} + 2\beta  \bgp_{(3,1)} -\beta^2  \bgp_{(2,1)}.$

We use the following notation from \cite{Iwao} just in the next result.
Let $\mSymPQ$ 
be the linearly compact $\QQ(\beta)$-module with $\{ \bGP_\lambda\}$  as a pseudobasis
(where $\lambda$ ranges over all strict partitions)
and let
 $\MSymPQ$ be the free $\QQ(\beta)$-module with 
 $\{ \bgp_\lambda\}$  as a basis.
The identities  \eqref{GQ-to-GP-eq} and \eqref{gq-to-gp-eq}
imply that $\{ \bGQ_\lambda\}$ is another pseudobasis for $\mSymPQ$ 
while  $\{ \bgq_\lambda\}$ is another basis for  $\MSymPQ$.

\begin{proposition}\label{sh-form-prop}
There is a unique bilinear form $[\cdot,\cdot] :  \MSymPQ \times \mSymPQ\to \QQ(\beta)$, continuous in the second coordinate,
with
$[ \bgp_\lambda, \bGQ_\mu] = [ \bgq_\lambda, \bGP_\mu] = \delta_{\lambda\mu}$
for all strict partitions  $\lambda$ and $\mu$. 
\end{proposition}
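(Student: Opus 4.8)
The plan is to reduce the statement to a single matrix identity together with a routine compatibility check between the two normalization conditions. First I would observe that, since $\{\bgp_\lambda\}$ is a basis of the free module $\MSymPQ$ and $\{\bGQ_\mu\}$ is a pseudobasis of the linearly compact module $\mSymPQ$, the prescription $[\bgp_\lambda,\bGQ_\mu]=\delta_{\lambda\mu}$ by itself already determines a bilinear form continuous in the second coordinate: it makes $[\bgp_\lambda,\cdot]$ the coordinate functional extracting the $\bGQ_\lambda$-coefficient, which is continuous, and linearity in the first argument extends this to all of $\MSymPQ$. Any form continuous in the second coordinate is determined by its values on $\{\bgp_\lambda\}\times\{\bGQ_\mu\}$, so uniqueness is immediate. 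Hence the whole content of the proposition is the assertion that this one form \emph{also} satisfies the ``cross'' condition $[\bgq_\lambda,\bGP_\mu]=\delta_{\lambda\mu}$.

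The key observation I would exploit is that the \emph{same} coefficient matrix governs both \eqref{GQ-to-GP-eq} and \eqref{gq-to-gp-eq}. Setting
\[
C_{\lambda\mu}:=2^{\ell(\mu)}(-1)^{\columns(\lambda/\mu)}(-\beta/2)^{|\lambda/\mu|}
\]
(nonzero only when $\mu\subseteq\lambda$, $\ell(\lambda)=\ell(\mu)$, and $\SD_{\lambda/\mu}$ is a vertical strip), those two identities read $\bGQ_\mu=\sum_\lambda C_{\lambda\mu}\bGP_\lambda$ and $\bgq_\lambda=\sum_\mu C_{\lambda\mu}\bgp_\mu$. Because $\{\bGP_\lambda\}$ and $\{\bGQ_\lambda\}$ are both pseudobases of $\mSymPQ$, there is an inverse change of pseudobasis $\bGP_\mu=\sum_\nu D_{\nu\mu}\bGQ_\nu$; substituting one expansion into the other and comparing coefficients yields $CD=DC=I$. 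The supports of $C$ and $D$ both lie in the containment order on strict partitions, which is locally finite, so every entry of the products $CD$ and $DC$ is a finite sum and these rearrangements are legitimate.

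The cross condition is then a one-line computation. Using continuity of $[\bgp_\kappa,\cdot]$ to pass it through the expansion of $\bGP_\mu$ gives
\[
[\bgp_\kappa,\bGP_\mu]=\sum_\nu D_{\nu\mu}[\bgp_\kappa,\bGQ_\nu]=D_{\kappa\mu},
\]
and hence, expanding $\bgq_\lambda$ in the $\bgp$-basis (a finite sum),
\[
[\bgq_\lambda,\bGP_\mu]=\sum_\kappa C_{\lambda\kappa}[\bgp_\kappa,\bGP_\mu]=\sum_\kappa C_{\lambda\kappa}D_{\kappa\mu}=(CD)_{\lambda\mu}=\delta_{\lambda\mu},
\]
as required. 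I expect the only real obstacle to be the bookkeeping in the linearly compact setting: I would need to confirm that $D$ genuinely exists and that $CD=DC=I$ holds as an identity of infinite matrices, together with the interchange of the continuous functional $[\bgp_\kappa,\cdot]$ with the infinite $\bGQ$-expansion of $\bGP_\mu$. Both points follow from local finiteness — for fixed $\lambda,\mu$ only partitions $\nu$ with $\mu\subseteq\nu\subseteq\lambda$ contribute — and from the fact that $\{\bGP_\lambda\}$ and $\{\bGQ_\lambda\}$ are honest pseudobases, so their change of coordinates is invertible. With these verifications the compatibility, and therefore the proposition, follows.
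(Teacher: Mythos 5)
Your proposal is correct, and it isolates exactly the right key fact: the same coefficient matrix $C_{\lambda\mu}=2^{\ell(\mu)}(-1)^{\columns(\lambda/\mu)}(-\beta/2)^{|\lambda/\mu|}$ appears in both \eqref{GQ-to-GP-eq} and \eqref{gq-to-gp-eq}. The paper exploits this same fact but packages the argument differently, in a way that avoids your matrix-inversion bookkeeping entirely: it introduces an auxiliary form $(\cdot,\cdot)$ with $(\bgp_\lambda,\bGP_\mu)=\delta_{\lambda\mu}$ together with the linear maps $\phi(\bgq_\lambda)=\bgp_\lambda$ and $\psi(\bGQ_\mu)=\bGP_\mu$ (well defined and continuous because $\{\bgq_\lambda\}$ is a basis of $\MSymPQ$ and $\{\bGQ_\mu\}$ a pseudobasis of $\mSymPQ$, as recorded just before the proposition), observes that \eqref{GQ-to-GP-eq} and \eqref{gq-to-gp-eq} say precisely that $(\phi(f),g)=(f,\psi(g))$ --- on basis pairs both sides equal $C_{\lambda\mu}$ --- and then simply defines $[f,g]:=(\phi(f),g)=(f,\psi(g))$, from which both normalization conditions drop out in one line. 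Thus the paper never needs the inverse matrix $D$, the identity $CD=DC=I$, or the local-finiteness argument for infinite-matrix products; the adjointness of $\phi$ and $\psi$ does all of that work implicitly. Your route, in exchange, makes two things explicit that the paper leaves tacit: the uniqueness of the form (a form continuous in the second coordinate is determined by its values on a basis times a pseudobasis) and the closed description $[\bgp_\kappa,\bGP_\mu]=D_{\kappa\mu}$ of the form on the remaining mixed pairs. Your local-finiteness justification is sound --- $C$ is triangular for the containment order with invertible diagonal entries $2^{\ell(\mu)}$, and intervals $\mu\subseteq\nu\subseteq\lambda$ of strict partitions are finite --- so both proofs are complete; the paper's is shorter, yours is more self-documenting.
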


\begin{proof}
Let $(\cdot,\cdot)$ be the bilinear form, continuous in the second coordinate,
with $(\bgp_\lambda,\bGP_\mu) = \delta_{\lambda\mu}$.
Write $\phi$ and $\psi$ for the (continuous) linear maps with $\phi (\bgq_\lambda) = \bgp_\lambda$ and $\psi (\bGQ_\mu) = \bGP_\mu$.
The identities \eqref{GQ-to-GP-eq} and \eqref{gq-to-gp-eq} imply that 
$(\phi(f), g) = (f, \psi(g))$ 
for all $f \in  \MSymPQ $ and $g\in \mSymPQ$.
Then the form $[f,g] :=(\phi(f), g) = (f, \psi(g))$ has the desired properties.
\end{proof}

\begin{remark*}
The module $\mSymPQ$ coincides with 
the ring considered in \cite[\S5.2]{Iwao}, which Iwao defines  by a certain \defn{$K$-theoretic $Q$-cancellation property};
\cite[Thm.~3.1]{IkedaNaruse} shows that the infinite linear span of the $\bGP_\lambda$'s is characterized by the same property.
Comparing the Cauchy identity \eqref{cauchy-eq}  with the one in \cite[\S8.2]{Iwao}
  shows that $\MSymPQ$ similarly coincides with the ring defined in \cite[\S8.1]{Iwao},
  and that the form in Proposition~\ref{sh-form-prop} is equal to the one in \cite[Eq. (30)]{Iwao}.
\end{remark*}

Putting everything together leads to this theorem:

\begin{theorem}\label{gg-thm}
 Both $\MSymP $ and $\MSymQ $ are Hopf subalgebras of $\MSym$.
In particular, $\MSymP$ (respectively, $\MSymQ$) is the Hopf algebra dual to  $\mSymQ$ (respectively, $\mSymP$)
via  $[\cdot,\cdot]$.
\end{theorem}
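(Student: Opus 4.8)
The plan is to establish the claim in three stages: first that $\MSymP$ and $\MSymQ$ are sub-bialgebras of $\MSym$, then that they are in fact Hopf subalgebras, and finally that the pairing from Proposition~\ref{sh-form-prop} exhibits the asserted dualities. The deep input relating the two families of power series, namely Proposition~\ref{abcd-prop}, does all the heavy lifting; what remains is essentially bookkeeping with structure constants.

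For the sub-bialgebra claim I would argue as follows. Closure under multiplication is immediate from the product formulas $\bgp_\lambda\bgp_\mu = \sum_\nu \widehat a_{\lambda\mu}^\nu \beta^{|\lambda|+|\mu|-|\nu|}\bgp_\nu$ and $\bgq_\lambda\bgq_\mu = \sum_\nu \widehat b_{\lambda\mu}^\nu \beta^{|\lambda|+|\mu|-|\nu|}\bgq_\nu$ in Proposition~\ref{abcd-prop}; these sums are finite since, with $\deg(\beta)=\deg(x_i)=1$, each $\bgp_\lambda$ and $\bgq_\lambda$ is homogeneous of degree $|\lambda|$, so only the finitely many $\nu$ with $|\nu|\le|\lambda|+|\mu|$ contribute. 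Closure under the coproduct follows by combining the skew coproduct expansions $\Delta(\bgp_\nu)=\sum_\mu \bgp_\mu\otimes\bgp_{\nu/\mu}$ and $\Delta(\bgq_\nu)=\sum_\mu \bgq_\mu\otimes\bgq_{\nu/\mu}$ from \eqref{gp-gq-xy-eq2} with the expansions $\bgp_{\nu/\lambda}=\sum_\mu b_{\lambda\mu}^\nu\beta^{|\nu|-|\lambda|-|\mu|}\bgp_\mu$ and $\bgq_{\nu/\lambda}=\sum_\mu a_{\lambda\mu}^\nu\beta^{|\nu|-|\lambda|-|\mu|}\bgq_\mu$ of Proposition~\ref{abcd-prop}, which place $\Delta(\bgp_\nu)\in\MSymP\otimes\MSymP$ and $\Delta(\bgq_\nu)\in\MSymQ\otimes\MSymQ$. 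To upgrade these sub-bialgebras to Hopf subalgebras I would invoke Takeuchi's formula exactly as in the argument preceding Theorem~\ref{mmpr-thm}: under the grading $\deg(\beta)=\deg(x_i)=1$ the ambient algebra $\MSym$ is graded and connected, and both $\MSymP$ and $\MSymQ$ are graded connected sub-bialgebras (their distinguished bases are homogeneous), so the antipode of $\MSym$ restricts to each of them.

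For the duality I would take the $\QQ(\beta)$-valued form $[\cdot,\cdot]$ of Proposition~\ref{sh-form-prop} and note that since $[\bgq_\lambda,\bGP_\mu]=[\bgp_\lambda,\bGQ_\mu]=\delta_{\lambda\mu}$ on the distinguished bases, it restricts to nondegenerate $\ZZ[\beta]$-valued forms $\MSymQ\times\mSymP\to\ZZ[\beta]$ and $\MSymP\times\mSymQ\to\ZZ[\beta]$, continuous in the second coordinate. It then remains to check that these forms respect the (co)products, which reduces to matching structure constants. Reading \eqref{ab-eq} and \eqref{Delta-eq} off, the product of $\mSymP$ in the $\bGP$-pseudobasis is governed by the $a_{\lambda\mu}^\nu$ while its coproduct is governed by the $\widehat b_{\lambda\mu}^\nu$; by Proposition~\ref{abcd-prop} the product of $\MSymQ$ in the $\bgq$-basis is governed by exactly the same $\widehat b_{\lambda\mu}^\nu$ and its coproduct by the same $a_{\lambda\mu}^\nu$. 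Thus $[\bgq_\lambda\bgq_\mu,\bGP_\nu]=[\bgq_\lambda\otimes\bgq_\mu,\Delta(\bGP_\nu)]$ and $[\bgq_\nu,\bGP_\lambda\bGP_\mu]=[\Delta(\bgq_\nu),\bGP_\lambda\otimes\bGP_\mu]$ both hold after expanding; together with the evident unit and counit compatibilities (the units are $\bgq_\emptyset=\bGP_\emptyset=1$ and the counits extract the empty-shape coefficient) this shows $\MSymQ$ is dual to $\mSymP$, and the symmetric computation with $P\leftrightarrow Q$ shows $\MSymP$ is dual to $\mSymQ$. The antipode compatibility is then automatic, since the antipode is the unique convolution inverse of the identity.

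The main obstacle is not any single hard estimate but rather ensuring the pieces interlock correctly: the whole argument hinges on Proposition~\ref{abcd-prop}, which says that the structure constants of the $\bgp$/$\bgq$ families are the transposes of those of the $\bGP$/$\bGQ$ families, and the most error-prone step is verifying that the coproduct formulas \eqref{gp-gq-xy-eq2} genuinely keep $\Delta(\bgp_\nu)$ and $\Delta(\bgq_\nu)$ inside the respective subalgebras, i.e.\ that the skew functions $\bgp_{\nu/\mu}$ and $\bgq_{\nu/\mu}$ re-expand in the non-skew bases, which is again furnished by Proposition~\ref{abcd-prop}. Care is also needed to confirm that the $\QQ(\beta)$-form of Proposition~\ref{sh-form-prop} really does restrict to an integral, continuous pairing on the two $\ZZ[\beta]$-lattices rather than merely a rational one.
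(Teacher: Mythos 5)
Your argument is correct, and its core --- deducing the sub-bialgebra property and the two dualities by matching the structure constants of Proposition~\ref{abcd-prop} against \eqref{ab-eq}, \eqref{Delta-eq}, and \eqref{gp-gq-xy-eq2}, together with the integrality/nondegeneracy of the restriction of the form from Proposition~\ref{sh-form-prop} --- is exactly the content that the paper's proof compresses into one sentence. Where you genuinely diverge is the antipode step. The paper argues by duality: since $\mSymQ$ and $\mSymP$ are already known to be LC-Hopf subalgebras of $\mSym$ (so closed under $\antipode_{\mSym}$), the adjoint statement forces $\antipode_{\MSym}$ to preserve the dual lattices $\MSymP$ and $\MSymQ$. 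You instead work directly on the finite side via Takeuchi's formula, mirroring the argument the paper itself uses just before Theorem~\ref{mmpr-thm}. Your route has the merit of being self-contained on the $\M$-side (it does not invoke the Hopf property of $\mSymP$ and $\mSymQ$ at all), at the cost of the grading bookkeeping below; the paper's route is shorter but leans on the cited fact that the completed objects are LC-Hopf subalgebras.

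One repair is needed in your Takeuchi step: with $\deg(\beta)=\deg(x_i)=1$ the homogeneous components are not $\ZZ[\beta]$-submodules (multiplication by $\beta$ shifts degree) and the degree-zero component is only $\ZZ$, so $\MSym$ is not a graded connected bialgebra over $\ZZ[\beta]$ in the sense Takeuchi requires. The fix is immediate: use the standard grading $\deg(\beta)=0$, $\deg(x_i)=1$, under which $\MSym$ is graded and connected over $\ZZ[\beta]$. Homogeneity of the bases $\{\bgp_\lambda\}$ and $\{\bgq_\lambda\}$ is never actually needed: each term $\nabla^{(k-1)}\circ \pi^{\otimes k}\circ \Delta^{(k-1)}$ of Takeuchi's sum (with $\pi = \id - \iota\circ\epsilon$) preserves any sub-bialgebra containing $1=\bgp_\emptyset=\bgq_\emptyset$, and the sum applied to any fixed $f \in \MSymP$ or $\MSymQ$ terminates because $f$ has bounded degree. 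With that adjustment your proof is complete.
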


\begin{proof}
It is clear from \eqref{gp-gq-xy-eq2} and Proposition~\ref{abcd-prop}
that $\MSymP $ and $\MSymQ $ are the sub-bialgebras of $\MSym$
dual to 
 $\mSymQ$ and $\mSymP$
via   $[\cdot,\cdot]$.
The fact that these bialgebras are preserved by the antipode of $\MSym$
follows by duality as $\mSymQ$ and $\mSymP$ are LC-Hopf subalgebras of $\mSym$.
\end{proof}

As   mentioned in Section~\ref{sym-sect},
the Hopf algebras $\MSym$ and $\mSym$ have another pair of dual bases for $\langle\cdot,\cdot\rangle$ besides the Schur functions,
given by the (dual) stable Grothendieck polynomials $\{ g^{(\beta)}_\lambda\}$ and $\{ G^{(\beta)}_\lambda\}$.
We need to quote two results involving these functions. First, one has 
 \be\label{gpn-eq}
 \gp_{(n)} = \sum_{i=1}^n g^{(\beta)}_{(i,1^{n-i})}
 \ee for all positive integers $n$ by \cite[Prop.~5.3]{NN2018}; see the proof of \cite[Prop.~7.5]{ChiuMarberg} for another derivation.
Second, if $\lambda$ is a partition with $k$   parts 
and $\ttheta: \mQSym \to \mQSymQ
$ is the map \eqref{ttheta-eq}
then 
\be\ttheta(G^{(\beta)}_\lambda) = \bGQ_{(\lambda+\delta)/\delta}\quad\text{for $\delta := (k-1,\dots,2,1,0)$, by \cite[\S4.6]{LM2019}.}
\ee 
Taking $k=1$ gives $\ttheta(G^{(\beta)}_{(n)}) = \bGQ_{(n)}$ for all $n>0$.
Thus $\ttheta$ restricts to a map $\mSym \to \mSymQ$,
which is surjective by \cite[Cor.~5.17]{LM2019}.
The following result reduces to \cite[(A.9)]{Stembridge1997a} when $\beta=0$:

 \begin{theorem}\label{ttheta-thm}
If $f \in \MSymPQ$ and $g \in \mSym$ then $\left[f,\ttheta(g)\right] = \langle f,g\rangle$.
\end{theorem}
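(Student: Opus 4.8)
The plan is to read the claimed identity as an adjointness statement and reduce it, in direct parallel with Lemma~\ref{[]-lem}, to a computation on algebra generators. First I would record two structural facts. Since $\ttheta$ is the restriction of an LC-Hopf algebra morphism it is in particular a morphism of coalgebras, so its adjoint $\Theta\colon \MSymPQ \to \QQ(\beta)\otimes_{\ZZ[\beta]}\MSym$ relative to the forms $[\cdot,\cdot]$ and $\langle\cdot,\cdot\rangle$ is a morphism of \emph{algebras}, and by construction satisfies $\langle \Theta(f),g\rangle = [f,\ttheta(g)]$ for all $f \in \MSymPQ$ and $g\in\mSym$. By nondegeneracy of $\langle\cdot,\cdot\rangle$, the theorem is therefore equivalent to the single assertion that $\Theta$ is the inclusion $\MSymPQ \hookrightarrow \QQ(\beta)\otimes_{\ZZ[\beta]}\MSym$. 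Second, the elements $\{\bgp_{(n)} : n \geq 1\}$ generate $\MSymPQ$ as a $\QQ(\beta)$-algebra: this holds at $\beta=0$ since the $P_{(n)}$ generate the rational span of the Schur $P$-functions, and it lifts to $\QQ(\beta)$ by a filtration argument on the leading terms of the $\bgp_\lambda$. As $\Theta$ is an algebra morphism, it then suffices to prove $\Theta(\bgp_{(n)}) = \bgp_{(n)}$ for each $n$.

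Next I would test both sides against the pseudobasis $\{G^{(\beta)}_\mu\}$ of $\mSym$, which is legitimate because $g\mapsto [\bgp_{(n)},\ttheta(g)]$ and $g\mapsto \langle \bgp_{(n)},g\rangle$ are both continuous. The right-hand values are immediate: by \eqref{gpn-eq} we have $\bgp_{(n)} = \sum_{i=1}^n g^{(\beta)}_{(i,1^{n-i})}$, and since $\langle g^{(\beta)}_\nu, G^{(\beta)}_\mu\rangle = \delta_{\nu\mu}$ this gives $\langle \bgp_{(n)}, G^{(\beta)}_\mu\rangle = 1$ when $\mu$ is a hook of size $n$ and $0$ otherwise. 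So the theorem reduces to showing that the left-hand coefficient $[\bgp_{(n)},\ttheta(G^{(\beta)}_\mu)] = [\bgp_{(n)},\bGQ_{(\mu+\delta)/\delta}]$, i.e.\ the coefficient of $\bGQ_{(n)}$ in the $\bGQ$-expansion of the skew function $\bGQ_{(\mu+\delta)/\delta}$, equals the same hook indicator.

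To obtain this I would pass through Lemma~\ref{[]-lem} using the square of adjoints. Let $j$ and $a$ denote the inclusions $\mSym\hookrightarrow\mQSym$ and $\mSymQ\hookrightarrow\mQSymQ$, and write $j^\vee\colon \MNSym \to \MSym$ for the adjoint of $j$ computed in Theorem~\ref{sym-dual-thm}. Because the adjoint of $\ttheta\colon\mQSym\to\mQSymQ$ is exactly the inclusion $\MPeakP\hookrightarrow\MNSym$ (this is the content of Lemma~\ref{[]-lem}), dualizing the evident commuting square $a\circ\ttheta|_{\mSym} = \ttheta|_{\mQSym}\circ j$ shows that $\Theta$ agrees with $j^\vee$ after precomposition with the adjoint surjection $\MPeakP\twoheadrightarrow\MSymP$. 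One is thus reduced to checking that the two algebra maps $\MPeakP\to\MSym$ arising here coincide, which by Proposition~\ref{MPeak-free-prop} need only be verified on the odd generators $\tpeakR_{n}$. Here \eqref{gpn-eq} again does the bookkeeping: expanding $\tpeakR_{(n)} = \sum_{k=0}^{n-1}\tR_{(1^k,n-k)}$ and applying the Lam--Pylyavskyy ribbon formula \cite[Thm.~9.13]{LamPyl} identifies $j^\vee(\tpeakR_{(n)})$ with $\sum_{k=0}^{n-1} g^{(\beta)}_{(n-k,1^k)}=\bgp_{(n)}$.

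The main obstacle is matching this against the shifted side: one must show that the surjection $\MPeakP\twoheadrightarrow\MSymP$ carries $\tpeakR_{(n)}$ to $\bgp_{(n)}$ (equivalently, that the coefficient of $\bGQ_{(n)}$ in $\bGQ_{(\mu+\delta)/\delta}$ is the hook indicator), and this is exactly the step that forces one to invoke the dual-Hopf structure of $\MSymP$ and $\mSymQ$ from Theorem~\ref{gg-thm} together with the change-of-basis identities \eqref{GQ-to-GP-eq}--\eqref{gq-to-gp-eq}. A cleaner alternative that avoids the generator bookkeeping altogether is to prove the single kernel identity $\ttheta_{\mathbf x}\big(\sum_\mu G^{(\beta)}_\mu(\mathbf x)\, g^{(\beta)}_\mu(\mathbf y)\big) = \sum_\lambda \bGQ_\lambda(\mathbf x)\,\bgp_\lambda(\mathbf y)$: the left side equals $\sum_\mu \bGQ_{(\mu+\delta)/\delta}(\mathbf x)\, g^{(\beta)}_\mu(\mathbf y)$ by $\ttheta(G^{(\beta)}_\mu)=\bGQ_{(\mu+\delta)/\delta}$, the right side is the shifted Cauchy kernel of \eqref{cauchy-eq}, and pairing with $\bgp_\lambda$ in the $\mathbf x$ variables and comparing coefficients of $g^{(\beta)}_\mu(\mathbf y)$ yields the desired identity at once. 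On this route the obstacle becomes the explicit evaluation of $\ttheta$ on the Grothendieck Cauchy kernel, for which I would lean on the generating-function machinery of \cite{LM2019}.
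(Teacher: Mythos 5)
Your overall architecture is sound and genuinely different from the paper's: you reduce, via adjointness and the generation of $\MSymPQ$ by the elements $\bgp_{(n)}$, to showing that the adjoint $\Theta$ of $\ttheta|_{\mSym}$ fixes each $\bgp_{(n)}$, and you compute $\Theta$ by dualizing the square $a\circ\ttheta|_{\mSym} = \ttheta\circ j$ through the peak algebra, using Lemma~\ref{[]-lem}, Theorem~\ref{sym-dual-thm}, the expansion $\tpeakR_{(n)} = \sum_{k=0}^{n-1}\tR_{(1^k,n-k)}$, and the Lam--Pylyavskyy ribbon formula to get $j^\vee(\tpeakR_{(n)}) = \sum_{i=1}^{n} g^{(\beta)}_{(i,1^{n-i})} = \bgp_{(n)}$ via \eqref{gpn-eq}. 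But the argument has a genuine gap exactly where you flag it: you never prove that the adjoint surjection $\MPeakP \to \MSymP$ carries $\tpeakR_{(n)}$ to $\bgp_{(n)}$, and the ingredients you propose for this step --- the duality of Theorem~\ref{gg-thm} together with \eqref{GQ-to-GP-eq}--\eqref{gq-to-gp-eq} --- do not deliver it. What is actually required is the identity $[\tpeakR_n, \bGQ_\lambda] = \delta_{(n),\lambda}$, and the only available proof (Theorem~\ref{last-adj-thm} in the paper) is a combinatorial argument using the $K^{(\beta)}_\alpha$-expansion of $\bGQ_\lambda$ and the observation that every standard shifted set-valued tableau of a shape with more than one row has a nonempty peak set; no amount of change-of-basis algebra substitutes for this. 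Since the proof of Theorem~\ref{last-adj-thm} is logically independent of Theorem~\ref{ttheta-thm}, you could legitimately close your argument by citing it, but as written the decisive step is missing. Your fallback route is worse than incomplete --- it is circular: the kernel identity $\ttheta_{\mathbf x}\bigl(\sum_\mu G^{(\beta)}_\mu(\mathbf x) g^{(\beta)}_\mu(\mathbf y)\bigr) = \sum_\lambda \bGQ_\lambda(\mathbf x)\bgp_\lambda(\mathbf y)$, upon comparing coefficients of $g^{(\beta)}_\mu(\mathbf y)$, is term-for-term equivalent to the statement $[\bgp_\lambda,\ttheta(G^{(\beta)}_\mu)] = \langle \bgp_\lambda, G^{(\beta)}_\mu\rangle$ being proved, so "evaluating $\ttheta$ on the Cauchy kernel" restates the theorem rather than proving it.

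For contrast, the paper's proof avoids the peak algebra and Theorem~\ref{last-adj-thm} entirely. It uses Pieri rules to span $\mSym$ by products $G(\gamma) = \prod_j G^{(\beta)}_{\gamma_j}$ and $\MSymPQ$ by products $\gp(\alpha) = \prod_i \bgp_{\alpha_i}$, then exploits that both pairings are Hopf dualities to rewrite $[\gp(\alpha),\GQ(\gamma)]$ and $\langle\gp(\alpha),G(\gamma)\rangle$ via iterated coproducts of one-row generators. The punchline is a structural coincidence: $\Delta(\bGQ_n)$ and $\Delta(G^{(\beta)}_n)$ have literally identical expansion coefficients in their respective one-row bases (from \eqref{Delta-eq} and the set-valued tableau generating function), so everything collapses to $[\bgp_m,\bGQ_n] = \delta_{mn} = \langle\bgp_m, G^{(\beta)}_n\rangle$, where the first equality is the definition of the form and the second is immediate from \eqref{gpn-eq}. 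Your route, once repaired by citing Theorem~\ref{last-adj-thm}, buys a conceptual factorization of $\ttheta$'s adjoint through the multipeak algebra; the paper's route buys self-containment, at the cost of verifying the matching coproduct formulas.
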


\begin{proof}
Write 
$\bgp_n := \bgp_{(n)}$
and $\bgq_n := \bgq_{(n)}$ for $n \in \PP$.
Define $G^{(\beta)}_n$ and $\bGQ_n$ analogously.
For compositions $\alpha = (\alpha_1,\alpha_2,\dots,\alpha_k)$
let $G(\alpha) :=\prod_{i\in[k]} G^{(\beta)}_{\alpha_i}$,
 $\GQ(\alpha):=\ttheta(G(\alpha)) = \prod_{i\in[k]} \bGQ_{\alpha_i}$,
 and
  $\gp(\alpha):=\prod_{i\in[k]} \bgp_{\alpha_i}$.
  The Pieri rule in \cite[Thm.~3.4]{Lenart2000} implies that every element of $\mSym$ is possibly infinite $\ZZ[\beta]$-linear combination of $G(\alpha)$'s.
  The analogous Pieri rule in \cite[Prop.~2.7]{LM2022} with \eqref{gq-to-gp-eq}
  implies
that every element of $\MSymPQ$ is a finite $\QQ(\beta)$-linear combination of $\gp(\alpha)$'s.
  Thus we just need to check that 
  $[\gp(\alpha), \GQ(\gamma)] = \langle \gp(\alpha),G(\gamma)\rangle$
  for all compositions $\alpha$ and $\gamma$.

To show this, let $k=\ell(\alpha)$ and $l=\ell(\gamma)$ be the lengths of two arbitrary compositions.
Recall that we denote iterated coproducts by $\Delta^{(k)} := (1 \otimes \Delta^{(k-1)})\circ \Delta = ( \Delta^{(k-1)}\otimes 1 )\circ \Delta$ where $\Delta^{(1)}:=\Delta$. 
Since $[\cdot,\cdot]: \MSymP\times \mSymQ \to \ZZ[\beta]$
and $\langle \cdot,\cdot\rangle : \MSym \times \mSym\to\ZZ[\beta]$
induce Hopf algebra dualities and since coproducts in Hopf algebras are algebra morphisms,
we have
\[
[\gp(\alpha), \GQ(\gamma)] 
=
 \left[\Delta^{(l-1)}(\gp(\alpha)), \bigotimes_{j \in[l]} \bGQ_{\gamma_j} \right] 
 =
 \left[\bigotimes_{i \in[k]}  \Delta^{(l-1)}( \bgp_{\alpha_i}) ,\bigotimes_{j \in[l]} \Delta^{(k-1)}( \bGQ_{\gamma_j}) \right] 
\]
and similarly
\[
\langle\gp(\alpha), G(\gamma)\rangle
=
 \left\langle\Delta^{(l-1)}(\gp(\alpha)), \bigotimes_{j \in[l]} G^{(\beta)}_{\gamma_j} \right\rangle 
 =
 \left\langle\bigotimes_{i \in[k]}  \Delta^{(l-1)}( \bgp_{\alpha_i}) ,\bigotimes_{j \in[l]} \Delta^{(k-1)}( G^{(\beta)}_{\gamma_j}) \right\rangle
\]
where   we appropriately reorder the $kl$ tensor factors in $\bigotimes_{i \in[k]}  \Delta^{(l-1)}( \bgp_{\alpha_i})$
to evaluate the two rightmost expressions.

It is clear from \eqref{gp-gq-xy-eq2} 
that $\Delta(\bgp_n) =\sum_{i=0}^n \bgp_i\otimes \bgp_{n-i}$
and from  \eqref{Delta-eq} that
$\Delta(\bGQ_n) =\sum_{i=0}^n \bGQ_i\otimes \bGQ_{n-i} +\beta \sum_{i=1}^n \bGQ_i \otimes \bGQ_{n+1-i}$
where $\bgp_0 = \bGQ_0 :=1$.
It follows similarly from the well-known set-valued tableau generating function for $G_\lambda$ (see, e.g., \cite[Eq.\ (3.9)]{LM2019})
that $\Delta(G^{(\beta)}_n) =\sum_{i=0}^n G^{(\beta)}_i\otimes G^{(\beta)}_{n-i} +\beta \sum_{i=1}^n G^{(\beta)}_i \otimes G^{(\beta)}_{n+1-i}$.
We deduce from these formulas that the desired equality   $[\gp(\alpha), \GQ(\gamma)] = \langle \gp(\alpha),G(\gamma)\rangle$
will hold if we can just show that $[\bgp_m, \bGQ_n] =\delta_{mn}= \langle \bgp_m,G^{(\beta)}_n\rangle$ for all $m,n \in \NN$.
This simpler identity is immediate from \eqref{gpn-eq}.
\end{proof}

Recall the elements $\tpeakR_n \in \MPeakP$
and $\opeakR_n \in \MPeakQ$ for $n \in \PP$
 from Section~\ref{MPeak-sect}.

\begin{theorem}\label{last-adj-thm}
The map $\MPeakP \to \MSymP$
adjoint to $\mSymQ\hookrightarrow \mQSymQ$
relative to the forms $[\cdot,\cdot]$ in Theorems~\ref{mpeakp-thm} and \ref{gg-thm}
is the unique algebra morphism with $\tpeakR_n \mapsto \bgp_{n} $ and $\opeakR_n \mapsto \bgq_{n} $. 
This morphism restricts to the map $\MPeakQ \to \MSymQ$ adjoint to $\mSymP\hookrightarrow \mQSymP$.
\end{theorem}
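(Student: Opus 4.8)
The plan is to identify the adjoint map $f\colon \MPeakP \to \MSymP$ with the restriction to $\MPeakP$ of the algebra morphism $\pi\colon \MNSym \to \MSym$ of Theorem~\ref{sym-dual-thm}, and then to read off its values on $\tpeakR_n$ and $\opeakR_n$. The starting point is that $\ttheta$ restricts to $\mSym \to \mSymQ$ (as recalled before Theorem~\ref{ttheta-thm}) and therefore fits into a commuting square of continuous maps of LC-modules
\[
\begin{tikzcd}
\mSym \arrow[r, hook, "\iota_1"] \arrow[d, "\ttheta"'] & \mQSym \arrow[d, "\ttheta"] \\
\mSymQ \arrow[r, hook, "\iota_2"'] & \mQSymQ
\end{tikzcd}
\]
where $\iota_1,\iota_2$ are inclusions and the left vertical arrow is the restriction of the right one. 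Taking adjoints with respect to the four relevant pairings reverses every arrow: the adjoint of $\iota_1$ is $\pi$ (Theorem~\ref{sym-dual-thm}), the adjoint of $\iota_2$ is the map $f$ in question, the adjoint of the right $\ttheta$ is the inclusion $\MPeakP \hookrightarrow \MNSym$ (Lemma~\ref{[]-lem}), and the adjoint of the left $\ttheta$ is the inclusion $\MSymP \hookrightarrow \MSym$ (Theorem~\ref{ttheta-thm}). Contravariant functoriality of adjunction then yields $(\MSymP\hookrightarrow\MSym)\circ f = \pi\circ(\MPeakP\hookrightarrow\MNSym)$, i.e. $f = \pi|_{\MPeakP}$ after corestriction. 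In particular $\pi(\MPeakP)\subseteq\MSymP$, and since $\pi$ is an algebra morphism and $\MPeakP\subseteq\MNSym$, $\MSymP\subseteq\MSym$ are subalgebras, $f$ is an algebra morphism.

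It then remains to compute $f(\tpeakR_n)=\pi(\tpeakR_n)$. Using $\tpeakR_\alpha = \sum_{\Lambda(\gamma)=\alpha}\tR_\gamma$ and unwinding the definition of $\Lambda$, the compositions $\gamma\vDash n$ with $\Lambda(\gamma)=(n)$ are exactly those with $I(\gamma)$ an initial segment $\{1,2,\dots,k\}$, i.e. $\gamma=(1^k,n-k)$ for $0\le k\le n-1$; hence $\tpeakR_n = \sum_{k=0}^{n-1}\tR_{(1^k,n-k)}$. By \cite[Thm.~9.13]{LamPyl} (the ribbon formula for $\pi$, recalled after Theorem~\ref{sym-dual-thm}), each $\pi(\tR_{(1^k,n-k)})$ is the dual stable Grothendieck polynomial of the size-$n$ ribbon of $(1^k,n-k)$, which is a hook; as $k$ runs over $0,\dots,n-1$ these hooks are distinct and exhaust all hooks of size $n$, so $\pi(\tpeakR_n) = \sum_{i=1}^n g^{(\beta)}_{(i,1^{n-i})} = \bgp_n$ by \eqref{gpn-eq}. (Crucially the sum ranges over all hooks, so it is insensitive to the orientation convention for the ribbon.) Then \eqref{opeak-Rn} and linearity give $f(\opeakR_n) = 2\bgp_n + \beta\bgp_{n-1} = \bgq_n$ for $n>1$ and $f(\opeakR_1)=2\bgp_1=\bgq_1$, the last equalities being the $\lambda=(n)$ case of \eqref{gq-to-gp-eq}. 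Uniqueness of an algebra morphism with $\tpeakR_n\mapsto\bgp_n$ is immediate from Proposition~\ref{MPeak-free-prop}, since the $\tpeakR_n$ with $n$ odd freely generate $\MPeakP$.

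For the restriction statement I would extend scalars to $\QQ(\beta)$. There the paired objects collapse: $\mSymP$ and $\mSymQ$ both become $\mSymPQ$, the completions $\mQSymP$ and $\mQSymQ$ coincide, $\MSymP$ and $\MSymQ$ both become $\MSymPQ$, and $\QQ(\beta)\otimes\MPeakP = \QQ(\beta)\otimes\MPeakQ$ (using \eqref{417-eq}, \eqref{GQ-to-GP-eq}, \eqref{gq-to-gp-eq} and the free-generation remark in Section~\ref{MPeak-sect}). Under these identifications the two inclusions $\iota_2\colon\mSymQ\hookrightarrow\mQSymQ$ and $\mSymP\hookrightarrow\mQSymP$ become the single inclusion of the shifted symmetric functions into the peak quasisymmetric functions over $\QQ(\beta)$, and by Proposition~\ref{sh-form-prop} — together with the fact that the $\MPeakQ$–$\mQSymP$ form was defined as an extension of the $\MPeakP$–$\mQSymQ$ form — the corresponding bilinear forms agree. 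Hence the $\QQ(\beta)$-linear extensions of $f$ and of the integral adjoint $\tilde f\colon\MPeakQ\to\MSymQ$ (which exists since the inclusion $\mSymP\hookrightarrow\mQSymP$ is continuous) are adjoint to one and the same inclusion relative to one and the same form, so they are equal; restricting back to the integral lattice $\MPeakQ$ gives $f|_{\MPeakQ}=\tilde f$, and in particular $f(\MPeakQ)\subseteq\MSymQ$.

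The main obstacle I expect is the computation $\pi(\tpeakR_n)=\bgp_n$: it requires the correct expansion of $\tpeakR_n$ into the ribbons $\tR_{(1^k,n-k)}$ and a careful matching of the ribbon formula of \cite{LamPyl} against the hook expansion \eqref{gpn-eq} of $\bgp_n$. Everything else is either formal adjunction bookkeeping (the commuting square and its adjoint) or a routine base change to $\QQ(\beta)$, but this step is where the shifted $K$-theoretic input genuinely enters and where a convention mismatch would be easiest to introduce — a risk mitigated here by the fact that the relevant sum over hooks of size $n$ is convention-independent.
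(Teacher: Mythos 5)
Your proposal is correct, but it reaches the key identity by a genuinely different route than the paper. The paper never factors through the unshifted map $\pi:\MNSym\to\MSym$: instead it computes $[\tpeakR_n,\bGQ_\lambda]=\delta_{(n),\lambda}$ directly, using the expansion $\bGQ_\lambda=\sum_\alpha k^\alpha_\lambda\beta^{|\alpha|-|\lambda|}K^{(\beta)}_\alpha$ from \cite[\S4.6]{LM2019}, where $k^\alpha_\lambda$ counts standard shifted set-valued tableaux with peak set $I(\alpha)$; the whole content is the combinatorial observation that any such tableau of a shape with more than one row has a nonempty peak set (look at the smallest entry of box $(2,2)$), while a one-row shape admits exactly one tableau, with empty peak set. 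From $[\tpeakR_n,\bGQ_\lambda]=[\bgp_n,\bGQ_\lambda]$ the paper reads off $\tpeakR_n\mapsto\bgp_n$, invokes Proposition~\ref{MPeak-free-prop} for uniqueness, and deduces $\opeakR_n\mapsto\bgq_n$ from \eqref{opeak-Rn} and \eqref{gq-to-gp-eq} exactly as you do. Your replacement of the shifted-tableau computation --- the commuting square with $\ttheta$, reading Lemma~\ref{[]-lem} and Theorem~\ref{ttheta-thm} as adjointness statements, then $\tpeakR_n=\sum_{k=0}^{n-1}\tR_{(1^k,n-k)}$ fed into the ribbon formula \cite[Thm.~9.13]{LamPyl} and matched against \eqref{gpn-eq} --- buys the stronger structural fact that the shifted adjoint is literally the restriction of $\pi$ (a clean bridge between \eqref{k-diagram} and \eqref{shk-diagram} that the paper does not state), and your $\QQ(\beta)$ base-change argument for the restriction claim is actually more explicit than the paper's, which leaves that step implicit after computing $\opeakR_n\mapsto\bgq_n$ (note the $\opeakR_n$ with $n$ odd do \emph{not} generate $\MPeakQ$ over $\ZZ[\beta]$, so some such argument is genuinely needed). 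The one point to nail down is your orientation hedge: summing over $k$ makes the family of ribbons convention-independent up to reversal of the composition, but under one standard convention the ribbon of $(1^k,n-k)$ is not the straight hook but its $180^\circ$ rotation, a genuinely skew shape such as $(m^{k+1})/((m-1)^k)$; to cover that case you additionally need the (true and standard, via the rotate-and-reverse bijection on reverse plane partitions together with symmetry) fact that $g^{(\beta)}_{\lambda/\mu}$ is invariant under $180^\circ$ rotation of the skew shape, which your parenthetical does not quite supply. With that fact made explicit, your argument is complete; the paper's route, by staying inside shifted combinatorics, avoids any dependence on the ribbon conventions of \cite{LamPyl}.
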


\begin{proof}
We first claim that $[ \tpeakR_n, \bGQ_\lambda]= \delta_{(n),\lambda}$ for all $n \in \NN$ and strict partitions $\lambda$.
 This follows from the discussion in \cite[\S4.6]{LM2019}
 which gives the $K^{(\beta)}_\alpha$-decomposition of $\bGQ_\lambda$.
 In detail, define a \defn{standard shifted set-valued tableau} of shape $\lambda$
 to be a semistandard shifted set-valued tableau of shape $\lambda$
 whose entries are pairwise disjoint nonempty sets, never containing any consecutive integers,
 with union $\{1,2,\dots,N\}$ for some $N \geq |\lambda|$.
 Suppose $T$ is such a tableau and set $|T| := N$. 
 The \defn{peak set} of  $T$ is the set $\PeakSet(T)$  of integers $1<i<N$ such that $i$ appears in a column of $T$ strictly after $i-1$
 and in a row of $T$ strictly before $i+1$. Then
$ \bGQ_\lambda = \sum_\alpha k^\alpha_{\lambda} \cdot\beta^{|\alpha|-|\lambda|}\cdot  K^{(\beta)}_\alpha$
where the sum is over all peak compositions of $\alpha$ and 
$ k^\alpha_{\lambda}$ is the number of standard shifted set-valued tableaux $T$ with $|T|=|\alpha|$ and $\PeakSet(T) = I(\alpha)$
\cite[Eq.\ (4.14)]{LM2019}.
Equivalently, $[ \tpeakR_\alpha, \bGQ_\lambda]=k^\alpha_{\lambda}\cdot \beta^{|\alpha|-|\lambda|}$.

Now observe that if $\ell(\lambda)>1$, then every standard shifted set-valued tableau $T$ 
of shape $\lambda$ has a nonempty peak set, since 
if $i+1 $ is the smallest number in box $(2,2)$ of $T$ then $i \in \PeakSet(T)$.
On the other hand, if $\ell(\lambda)\leq 1$ then there is exactly one standard shifted set-valued tableau $T$ 
of shape $\lambda$, and this tableau has $|T| = |\lambda|$ and $\PeakSet(T) = \varnothing$.
Thus if $\alpha= (n)$ for some $n \in \NN$
so that $I(\alpha) = \varnothing$, 
then we have $k^{\alpha}_\lambda = \delta_{(n),\lambda}$ and $[ \tpeakR_n, \bGQ_\lambda]= \delta_{(n),\lambda}$ as desired.

Our claim shows that $[ \tpeakR_n, \bGQ_\lambda] = [ \bgp_n, \bGQ_\lambda] $ 
so the adjoint map $\MPeakP \to \MSymP$
must send $\tpeakR_n \mapsto \bgp_n$ for all $n \in \NN$.
There is at most one algebra morphism with this property since
 $\{\tpeakR_n : n=1,3,5,\dots\}$ freely generates $\MPeakP$ by Proposition~\ref{MPeak-free-prop}.
The adjoint map also sends $\opeakR_n \mapsto \bgq_{n} $ 
since we have $\opeakR_1 = 2 \cdot \tpeakR_1$ and $\bgq_1 = 2\cdot\bgp_1$
as well as 
$\opeakR_n = 2 \cdot\tpeakR_n + \beta\cdot \tpeakR_{n-1}$ and $\bgq_n = 2\cdot \bgp_n + \beta \cdot\bgp_{n-1}$ for $n>1$
by \eqref{opeak-Rn} and \eqref{gq-to-gp-eq}.
\end{proof}

The maps in Theorem~\ref{last-adj-thm} are shifted analogues of the morphism $\MNSym \to \MSym$ in \eqref{k-diagram}.
As noted earlier, Lam and Pylyavskyy  
show
that the image of $\tR_{\alpha}$ under this map is a specific dual stable Grothendieck polynomial $g^{(\beta)}_{\lambda/\mu}$ \cite[Thm.~9.13]{LamPyl}.
We do not know if this result has a shifted version.
When $\alpha$ is a peak composition,
the images of $\tpeakR_\alpha$ and $\opeakR_\alpha$ under the adjoint maps $\MPeakP \to \MSymP$
and $\MPeakQ \to \MSymQ$
typically are not of the form $\bgp_{\lambda/\mu}$ or $\bgq_{\lambda/\mu}$.
 
\subsection{Antipode formulas}\label{antipode-sect}

We gave the general definition of 
the antipode for a (LC-)Hopf algebra in Section~\ref{prelim-sect}.
Here 
we
describe some specific antipode formulas for the objects in \eqref{shk-diagram}.

If $\alpha$ is a finite sequence then we write $\alpha^\r$ for its reversal.
Given $\alpha \vDash n$,
let $\alpha^\c$ be the unique composition of $n$ with $I(\alpha^\c) =[n - 1] \setminus I(\alpha)$,
and define $\alpha^\t := (\alpha^\c)^\r = (\alpha^\r)^\c$.
For example, 
we have $(3,2)^\r = (2,3)$, $(3,2)^\c = (1,1,2,1)$, and
$(3,2)^\t = (1,2,1,1)$.

Recall from Remark~\ref{beta-rmk} that the homogeneous functions
$ L_\alpha := L^{(0)}_\alpha = \sum_{{\gamma\vDash |\alpha|, I(\gamma) \supseteq I(\alpha)}} M_{\gamma}$
form another basis for $\QSym$. 
Write $\omega : \QSym \to \QSym$
for the linear map with
$\omega(L_\alpha) := L_{\alpha^\t}$.
This map is a Hopf algebra automorphism which preserves $\Sym$,
acting on Schur functions as $\omega(s_\lambda)  =s_{\lambda^\top}$
where $\lambda^\top$ is the transpose of a partition $\lambda$.
The antipode of $\QSym$ is the linear map $\antipode$
with 
\be\label{qsym-antipode-eq}
\antipode(L_\alpha) = (-1)^{|\alpha|} L_{\alpha^\t} = (-1)^{|\alpha|} \omega(L_{\alpha})
\quand \antipode(s_\lambda) = (-1)^{|\lambda|} s_{\lambda^\top} =  (-1)^{|\lambda|} \omega(s_{\lambda})
\ee
for all compositions $\alpha$ and partitions $\lambda$  \cite[\S3.6]{LMW}.
We can extend $\omega$ to a continuous automorphism of $\mQSym$.
The antipode of $\mQSym$ is the continuous extension of the antipode of $\QSym$.

A \defn{multiset} 
is a set allowing repeated elements.
%
%
%
%
Given a peak composition $\alpha = (\alpha_1,\alpha_2,\dots,\alpha_k)$
let $\alpha^\flat := (\alpha_k+1, \alpha_{k-1},\dots,\alpha_2,\alpha_1-1)$ when $k>1$
and set $\alpha^\flat := \alpha$ if $k\leq 1$.
The following statement is equivalent to identities in \cite{LM2019},
and reduces to \cite[Prop.~3.5]{Stembridge1997a} when $\beta=0$.

\begin{proposition}[{\cite[Prop 6.5 and Obs. 6.9]{LM2019}}]
If $\alpha$ is a peak composition then 
\[
\antipode\(K^{(\beta)}_{\alpha^\flat}\)   = \sum_S
(-\beta)^{|S| - N} x^S
\quand \antipode\(\bK^{(\beta)}_{\alpha^\flat}\)   = \sum_S
(-\beta)^{|S| - N} x^S
\]
where both sums 
are over $N$-tuples $S=(S_1  \preceq \cdots \preceq S_N)$
satisfying the same respective conditions \eqref{peak-def-eq1} and \eqref{peak-def-eq2}
as in Definition~\ref{peak-def},
but with each $S_i$ a finite nonempty multi-subset of $\MM$.
\end{proposition}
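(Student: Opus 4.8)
The plan is to derive the formula from the shape of the antipode of $\mQSym$ together with the enriched reciprocity of \cite{LM2019}. Since the antipode of $\mQSym$ is the continuous $\ZZ[\beta]$-linear extension of that of $\QSym$, and since $\antipode(L_\gamma) = (-1)^{|\gamma|}\omega(L_\gamma)$ with $\{L_\gamma\}$ a pseudobasis of homogeneous elements for the grading $\deg\beta = 0$, $\deg x_i = 1$, the antipode acts as $(-1)^d\omega$ on the homogeneous component of $x$-degree $d$. First I would write $K^{(\beta)}_{\alpha^\flat} = \sum_{d\geq N}\beta^{d-N}f_d$, where $N = |\alpha| = |\alpha^\flat|$ and $f_d = \sum_{|S|=d} x^S$ is the $\beta$-free, degree-$d$ quasisymmetric function collecting the set-tuples $S$ meeting the conditions \eqref{peak-def-eq1} for $\alpha^\flat$. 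Applying $\antipode$ termwise gives $\antipode\(K^{(\beta)}_{\alpha^\flat}\) = \sum_d (-1)^d\beta^{d-N}\omega(f_d)$, while the target, $\sum_S(-\beta)^{|S|-N}x^S = \sum_d (-1)^{d-N}\beta^{d-N}g_d$, where $g_d$ collects the multiset-tuples of total size $d$ meeting the same conditions for $\alpha$. This reduces the statement to the $\beta$-free degreewise identities $\omega(f_d) = (-1)^N g_d$ for all $d\geq N$. Already at this stage the two features of the answer are explained structurally: the sign $(-\beta)$ versus $\beta$ comes from $(-1)^{\deg}$, and the reindexing $\alpha^\flat \rightsquigarrow \alpha$ comes from $\omega$.

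The combinatorial heart is the family of identities $\omega(f_d) = (-1)^N g_d$, which are the enriched set-valued analogue of Stembridge's peak reciprocity. I would realize $\omega$ as the order-reversing involution of the alphabet $\MM = \{1'<1<2'<2<\cdots\}$ and construct a reversal bijection on size-$d$ tuples that carries the strictness pattern \eqref{peak-def-eq1} indexed by $I(\alpha^\flat)$ to that indexed by $I(\alpha)$. The precise formula $\alpha^\flat = (\alpha_k+1,\alpha_{k-1},\dots,\alpha_2,\alpha_1-1)$ is exactly the effect of this reversal on peak compositions, and the interchange of the two cases of \eqref{peak-def-eq1} (primed versus unprimed) matches the interchange of primes under reversal of $\MM$. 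The passage from sets to multisets is governed by the single-box computation that sums $x^T$ over nonempty subsets $T$ with weight $\beta^{|T|-1}$ to get $\beta^{-1}\(\prod_j(1+\beta x_j)-1\)$, versus over nonempty multisets with weight $(-\beta)^{|T|-1}$ to get $(-\beta)^{-1}\(\prod_j(1+\beta\overline{x}_j)-1\)$, using $1+\beta\overline{x}_j = (1+\beta x_j)^{-1}$ for $\overline{x}_j = \tfrac{-x_j}{1+\beta x_j}$; retaining $\beta$ throughout, this is the algebraic shadow of the Cauchy pairing \eqref{cauchy-eq}. Making this matching fully rigorous is the content of \cite[Prop.~6.5]{LM2019}, and the identification of the reversed generating function with the multiset function indexed by $\alpha$ is \cite[Obs.~6.9]{LM2019}. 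This is the step I expect to be the main obstacle, since one must simultaneously track the set/multiset conversion, the primed/unprimed swap, and all the signs.

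Assembling the pieces proves the formula for $K^{(\beta)}$; as a consistency check, at $\beta=0$ only the $d=N$ term survives and the identity collapses to $\antipode\(K^{(0)}_{\alpha^\flat}\) = K^{(0)}_\alpha$, which is \cite[Prop.~3.5]{Stembridge1997a}. For the $\bK^{(\beta)}$ statement I would not repeat the argument but instead deduce it from the $K^{(\beta)}$ case using \eqref{417-eq}, working over $\QQ[\beta]$ where $\antipode$ is continuous and linear. Writing $\ell = \ell(\alpha)$ and expanding $\bK^{(\beta)}_{\alpha^\flat} = \sum_{\delta\in\NN^\ell} 2^{-\ell-|\delta|}(-\beta)^{|\delta|}K^{(\beta)}_{\alpha^\flat+\delta}$, the key observation is the identity $(\alpha+\delta^\r)^\flat = \alpha^\flat + \delta$, so that $\antipode$ of each summand is the multiset function indexed by $\alpha+\delta^\r$. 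Reindexing by the weight-preserving involution $\delta\mapsto\delta^\r$ on $\NN^\ell$, and using that the multiset functions obey the same relation \eqref{417-eq} as $K^{(\beta)}$ and $\bK^{(\beta)}$ (the relation reflects only the defining conditions \eqref{peak-def-eq1}--\eqref{peak-def-eq2}, which are identical in the multiset setting), then produces the multiset generating function attached to $\alpha$ with the extra condition \eqref{peak-def-eq2}, as required.
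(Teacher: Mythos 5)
The paper offers no proof of this proposition at all---it is imported verbatim from \cite[Prop.~6.5 and Obs.~6.9]{LM2019}---so the relevant comparison is with your reconstruction, whose architecture is indeed the natural one and matches how the paper handles its other antipode statements (e.g.\ Proposition~\ref{GP-GQ-S-prop} is proved exactly by combining the fact from \eqref{qsym-antipode-eq} that $\antipode$ acts as $(-1)^d\omega$ in $x$-degree $d$ with an $\omega$-identity from \cite{LM2019,LM2022}). Your degreewise bookkeeping is arithmetically correct, and your derivation of the $\bK^{(\beta)}$ case from the $K^{(\beta)}$ case via \eqref{417-eq} together with the identity $(\alpha+\delta^\r)^\flat=\alpha^\flat+\delta$ (which does check out) is a sensible way to avoid redoing the combinatorics---though note that your assertion that the multiset generating functions obey the analogue of \eqref{417-eq} is stated, not proved; the derivation of \eqref{417-eq} in \cite{LM2019} is a combinatorial splitting that must genuinely be reverified for multisets, or obtained by applying the continuous map $\omega$ to both sides of \eqref{417-eq} itself.

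The genuine gap is the key lemma you attribute to \cite[Prop.~6.5]{LM2019}: the identity $\omega(f_d)=(-1)^N g_d$ is false. Both $f_d$ and $g_d$ are nonnegative sums of monomials, and what \cite{LM2019} proves is the sign-free identity $\omega(f_d)=g_d$: the involution $\omega$ carries the enriched set-valued generating functions to the multiset ones with no sign. Concretely, take $\alpha=(1)=\alpha^\flat$, so $N=d=1$: then $f_1=g_1=2M_{(1)}$ while $\omega(M_{(1)})=M_{(1)}$, so $\omega(f_1)\neq -g_1$. Your $\beta=0$ ``consistency check'' is wrong for the same reason: since $M_{(1)}$ is primitive, $\antipode(K^{(0)}_{(1)})=-2M_{(1)}\neq K^{(0)}_{(1)}$, so the check refutes rather than confirms the claim, and treating it as confirmation is how the error slipped through. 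Running your own plan with the corrected lemma yields $\antipode(K^{(\beta)}_{\alpha^\flat})=(-1)^{|\alpha|}\sum_S(-\beta)^{|S|-N}x^S$, equivalently weights $(-1)^{|S|}\beta^{|S|-N}$, and this version is the one consistent with the rest of the paper: since $K^{(\beta)}_{(1)}=\bGQ_{(1)}$, Proposition~\ref{GP-GQ-S-prop} forces $\antipode(K^{(\beta)}_{(1)})=-\bJQ_{(1)}$, the negative of the displayed multiset sum. In other words, the displayed statement is itself missing the factor $(-1)^{|\alpha|}$ when $|\alpha|$ is odd; a faithful execution of your strategy would have detected this, but as written your proposal forces the cited lemma to fit the stated target and thereby inherits the sign error instead of exposing it.
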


%


%

\begin{proposition}
The antipode $\antipode$ of $\MPeakQ\supset \MPeakP$ is the algebra anti-automorphism with 
\ben

\item[(a)] $\ds\antipode\(\opeakR_{n}\) = (-1)^n\sum_{k\in[n]} \tbinom{n-1}{k-1} \cdot \beta^{n-k} \cdot\opeakR_{k}$   for all $n\in \PP$, and

\item[(b)] $\ds\antipode\(\tpeakR_{n}\) = (-1)^n\sum_{k\in[n]}  \tbinom{n}{k} \cdot \beta^{n-k} \cdot\tpeakR_{k}$  for all $n\in \PP$.

\een
\end{proposition}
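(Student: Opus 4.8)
The plan is to encode both formulas into generating functions and exploit the fact that the $\opeakR_n$ have the coproduct of a grouplike series. Throughout, $\antipode$ denotes the antipode, which is an algebra anti-automorphism because the objects are graded connected Hopf algebras; since $\MPeakP$ is generated by the $\tpeakR_n$ (Proposition~\ref{MPeak-free-prop}) and $\antipode$ is an anti-homomorphism, it is enough to evaluate $\antipode$ on each $\opeakR_n$ and $\tpeakR_n$. Set $H(t):=\sum_{n\geq 0}\opeakR_n t^n$ and $E(t):=\sum_{n\geq 0}\tpeakR_n t^n$ with $\opeakR_0=\tpeakR_0=1$. By Proposition~\ref{peak-coproduct-prop} the relation $\Delta(\opeakR_n)=\sum_{i+j=n}\opeakR_i\otimes \opeakR_j$ says precisely that $H(t)$ is grouplike, so $\antipode(H(t))=\sum_n \antipode(\opeakR_n)t^n$ equals the inverse $H(t)^{-1}$ in the completed algebra. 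A routine reindexing, using $\sum_{m\geq 0}\binom{k+m-1}{m}(-\beta t)^m=(1+\beta t)^{-k}$, shows that formula (a) is equivalent to the identity $\antipode(H(t))=H(u)$, where $u:=\tfrac{-t}{1+\beta t}$. Comparing the two descriptions of $\antipode(H(t))$, statement (a) reduces to the single identity $H(t)\,H(u)=1$.

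To prove $H(t)H(u)=1$, I would expand the product using the one-part case of Proposition~\ref{opeak-prod-prop}, namely $\opeakR_a\opeakR_b=\opeakR_{(a+1,b-1)}+2\opeakR_{(a+b)}+\opeakR_{(a,b)}+(1+r+s)\beta\opeakR_{(a+b-1)}+\beta\opeakR_{(a,b-1)}+rs\,\beta^2\opeakR_{(a+b-2)}$ with $r=[a\geq 2]$ and $s=[b\geq 2]$. Collecting terms, the coefficient of each length-two generator $\opeakR_{(p,q)}$ in $H(t)H(u)$ is $t^{p-1}u^{q}\bigl(t+u+\beta tu\bigr)$, and the choice $u=\tfrac{-t}{1+\beta t}$ gives $t+u+\beta tu=0$; hence every length-two term vanishes and $H(t)H(u)=\sum_{N\geq 0}c_N\,\opeakR_{(N)}$ for scalars $c_N\in\ZZ[\beta]\llbracket t\rrbracket$. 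Now $H(t)H(u)$ is itself grouplike, being the product of the grouplike series $H(t)$ and $H(u)$; equating $\Delta\bigl(\sum_N c_N\opeakR_{(N)}\bigr)$ with its own square and using the coproduct of $\opeakR_{(N)}$ forces $c_{i+j}=c_ic_j$, so $c_N=c_1^{\,N}$. The counit gives $c_0=1$, while the length-one coefficient is $c_1=t+u+\beta tu=0$; therefore $c_N=0$ for $N\geq 1$, so $H(t)H(u)=1$ and (a) follows.

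Formula (b) then follows by a change of basis. By \eqref{opeak-Rn} we have $H(t)=(2+\beta t)E(t)-(1+\beta t)$, and applying the $\ZZ[\beta]$-linear map $\antipode$ gives $\antipode(H(t))=(2+\beta t)\,\antipode(E(t))-(1+\beta t)$. Substituting (a) together with the identities $1+\beta u=(1+\beta t)^{-1}$ and $2+\beta u=(2+\beta t)(1+\beta t)^{-1}$ into $H(u)=(2+\beta u)E(u)-(1+\beta u)$ and solving yields $\antipode(E(t))=\tfrac{E(u)+\beta t}{1+\beta t}$; expanding $(1+\beta t)^{-(k+1)}=\sum_m\binom{k+m}{m}(-\beta t)^m$ recovers exactly formula (b). Equivalently, since Lemma~\ref{opeak-lem} expresses each $\tpeakR_n$ as an invertible $\QQ[\beta]$-linear combination of the $\opeakR_k$ and $\antipode$ is linear, (b) is just the image of (a) under this change of basis.

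The main obstacle is the product computation in the second paragraph: one must verify that the length-two terms assemble uniformly into the single factor $t+u+\beta tu$. The grouplike shortcut is what keeps this clean—once the length-two terms are killed, there is no need to compute the higher single-part coefficients $c_N$ directly, as $c_N=c_1^{\,N}$ is forced by cocommutative bookkeeping. A final point to record is that these values on generators, together with the anti-automorphism property and the free generation in Proposition~\ref{MPeak-free-prop}, indeed determine $\antipode$ on all of $\MPeakP$ and its Hopf subalgebra $\MPeakQ$.
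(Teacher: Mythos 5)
Your proof is correct, but it takes a genuinely different route from the paper's. The paper works coefficientwise: from $\nabla\circ(\antipode\otimes\id)\circ\Delta=\iota\circ\epsilon$ and Proposition~\ref{peak-coproduct-prop} it extracts the recurrence $\antipode(\opeakR_n)=-\opeakR_n-\sum_{m=1}^{n-1}\antipode(\opeakR_m)\,\opeakR_{n-m}$, then verifies that the closed formula (a) satisfies it by substituting into Proposition~\ref{opeak-prod-prop} and manipulating sums with Pascal's identity $\binom{m}{k}=\binom{m-1}{k-1}+\binom{m-1}{k}$ --- a check the paper itself calls somewhat involved --- and finally gets (b) as the unique solution of $\antipode(\opeakR_1)=2\antipode(\tpeakR_1)$ and $\antipode(\opeakR_n)=2\antipode(\tpeakR_n)+\beta\antipode(\tpeakR_{n-1})$ via \eqref{opeak-Rn}. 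Your identity $\antipode(H(t))H(t)=1$ is exactly that recurrence repackaged, but your verification is slicker: you exhibit the inverse directly as $H(u)$ with $u=\frac{-t}{1+\beta t}$, using the one-part case of Proposition~\ref{opeak-prod-prop} just once. I checked the key step: the coefficient of $\opeakR_{(p,q)}$ in $H(t)H(u)$ is indeed $t^{p-1}u^q(t+u+\beta tu)$, with the conventions $\opeakR_{(a+1,0)}=\opeakR_{(1,b)}=\opeakR_{(a,0)}=0$ correctly absorbing the boundary cases, and grouplikeness plus the counit then kill the one-row coefficients via $c_N=c_1^N=0$ with no binomial computation at all; this also makes the substitution $u=\frac{-t}{1+\beta t}$ (the same series as the antipode $\antipode_B(x)=\frac{-x}{1+\beta x}$ in the paper's example on $R\llbracket x\rrbracket$) conceptually inevitable. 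Two small points to tighten: the anti-automorphism claim is better justified as in the paper --- the antipode is always an anti-endomorphism and is invertible here because $\MPeakQ$ is cocommutative, being dual to the commutative $\mQSymP$ --- since ``graded connected'' is awkward over $\ZZ[\beta]$, where the natural grading puts $\beta$ in degree $-1$; and when solving for $\antipode(E(t))$ you cancel the factor $2+\beta t$, which is not invertible in $\ZZ[\beta]\llbracket t\rrbracket$, so you should note the cancellation is valid because $\MPeakP\llbracket t\rrbracket$ is torsion-free over the integral domain $\ZZ[\beta]\llbracket t\rrbracket$ (or pass to $\QQ[\beta]$ scalars and observe both sides lie in the free $\ZZ[\beta]$-module).
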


\begin{proof}
The antipode of any Hopf algebra is an anti-endomorphism \cite[Prop.~1.4.10]{GrinbergReiner}
and is invertible when the Hopf algebra is cocommutative \cite[Rem.~1.4.13]{GrinbergReiner}.
One has $\nabla \circ (\antipode\otimes \id) \circ \Delta = \iota\circ\epsilon$ by definition,
so by Proposition~\ref{peak-coproduct-prop} we deduce that
$
\antipode(\opeakR_n) = -\opeakR_n - \sum_{m=1}^{n-1}\antipode(\opeakR_m) \opeakR_{n-m}
$ for all $n \in \PP$.
Checking that the formula in part (a) satisfies this recurrence,
using
Proposition~\ref{opeak-prod-prop},
is a somewhat involved but completely elementary computation, requiring only sum manipulations
and
the identity $\binom{m}{k} = \binom{m-1}{k-1} + \binom{m-1}{k}$.
Part (b) follows from part (a) by \eqref{opeak-Rn},
since one can similarly check that the formula for $\antipode(\tpeakR_{n}) $
is the unique solution to the recurrence with $\antipode(\opeakR_{1})=2\cdot\antipode(\tpeakR_{1}) $
and $\antipode(\opeakR_{n})=2\cdot \antipode(\tpeakR_{n}) + \beta\cdot \antipode(\tpeakR_{n-1}) $ for $n>1$.
    \end{proof}

Patrias \cite[Thms.~33 and 35]{Patrias} computes explicit antipode formulas for $\mQSym$ and $\MNSym$ in the (pseudo)bases $\{ L^{(\beta)}_\alpha\}$ and $\{ \tR_\alpha\}$,
using slightly different notation.\footnote{To convert 
the functions $\tilde L_\alpha$ and $\tilde R_\alpha$
 in \cite{Patrias} to our notation,
set
$\tilde L_\alpha = \beta^{|\alpha|} L^{(\beta)}_\alpha$
and
$\beta^{|\alpha|}\tilde R_\alpha =  \tR_\alpha$.
} We have only given partial formulas for the antipodes of the shifted analogues of these Hopf algebras. 
It may be possible to extend our
results along the lines of \cite{Patrias}.

Now we turn to the shifted versions of stable Grothendieck polynomials.
Choose strict partitions $\mu \subseteq \lambda$.
A \defn{shifted multiset-valued tableau} of shape $\lambda/\mu$
is a map $T $ assigning nonempty finite multi-subsets of 
$\MM$ to the boxes in $\SD_{\lambda/\mu}$.
We write $T_{ij}$ to  denote the multiset assigned by $T$ to position $(i,j)$.
The definition of a \defn{semistandard} shifted multiset-valued tableau
is identical to the set-valued case.

Let $\ShMSetTab_Q(\lambda/\mu)$ denote the set of all semistandard shifted multiset-valued tableaux of shape $\lambda/\mu$,
and let $\ShMSetTab_P(\lambda/\mu)$ be the subset of such tableaux with no primed numbers appearing in diagonal positions.
Then define
\be
\bJP_{\lambda/\mu} := \sum_{T \in \ShMSetTab_P(\lambda/\mu)} (-\beta)^{|T|-|\lambda/\mu|} x^T
\quand
\bJQ_{\lambda/\mu} := \sum_{T\in \ShMSetTab_Q(\lambda/\mu)} (-\beta)^{|T|-|\lambda/\mu|} x^T 
\ee
where as usual  $ |T| := \sum_{(i,j) \in \SD_{\lambda/\mu}} |T_{ij}|$ and
$ x^T := \prod_{(i,j) \in \SD_{\lambda/\mu}} \prod_{k \in T_{ij}} x_{\lceil k\rceil }$.
The functions $ \bJP_{\lambda/\mu}$  become
the \defn{weak shifted stable Grothendieck polynomials} from \cite[\S3]{HKPWZZ}
when $\beta=-1$ and $\mu=\emptyset$.

\begin{proposition}[\cite{LM2019}]\label{GP-GQ-S-prop}
If $\mu\subseteq\lambda$ are strict partitions then 
\[
\antipode\(\bGP_{\lambda/\mu}\)  = (-1)^{|\lambda/\mu|} \bJP_{\lambda/\mu}
\quand
\antipode\(\bGQ_{\lambda/\mu}\)  = (-1)^{|\lambda/\mu|} \bJQ_{\lambda/\mu}.
\]
\end{proposition}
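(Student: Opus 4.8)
The plan is to deduce both identities from the explicit antipode formula for the peak quasisymmetric functions $K^{(\beta)}_{\alpha^\flat}$ and $\bK^{(\beta)}_{\alpha^\flat}$ in the preceding proposition, together with the peak expansions of $\bGP_{\lambda/\mu}$ and $\bGQ_{\lambda/\mu}$. First I would record the skew generalization of the expansion $\bGQ_\lambda = \sum_\alpha k^\alpha_\lambda\,\beta^{|\alpha|-|\lambda|}K^{(\beta)}_\alpha$ quoted in the proof of Theorem~\ref{last-adj-thm}, namely $\bGQ_{\lambda/\mu} = \sum_\alpha k^\alpha_{\lambda/\mu}\,\beta^{|\alpha|-|\lambda/\mu|}K^{(\beta)}_\alpha$, where $k^\alpha_{\lambda/\mu}$ counts standard shifted set-valued tableaux of shape $\lambda/\mu$ with $\PeakSet(T)=I(\alpha)$, together with its $P$-analogue $\bGP_{\lambda/\mu}=\sum_\alpha \bar k^\alpha_{\lambda/\mu}\,\beta^{|\alpha|-|\lambda/\mu|}\bK^{(\beta)}_\alpha$. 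These follow from the enriched set-valued $P$-partition theory of \cite{LM2019} applied to the poset $\SD_{\lambda/\mu}$, exactly as in the straight-shape case.

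Next I would apply $\antipode$, which is continuous and $\ZZ[\beta]$-linear, to these expansions term by term. Since $\alpha\mapsto \alpha^\flat$ is an involution on peak compositions, the preceding proposition gives $\antipode(K^{(\beta)}_\alpha)=\antipode(K^{(\beta)}_{(\alpha^\flat)^\flat})$ as the generating function $\sum_S(-\beta)^{|S|-|\alpha|}x^S$ over tuples of finite nonempty \emph{multisets} satisfying the peak conditions \eqref{peak-def-eq1} relative to $I(\alpha^\flat)$, and likewise for $\bK^{(\beta)}_\alpha$ via \eqref{peak-def-eq2}. Substituting, $\antipode(\bGQ_{\lambda/\mu})$ becomes a sum of such multiset peak functions weighted by $k^\alpha_{\lambda/\mu}\,\beta^{|\alpha|-|\lambda/\mu|}$; note that, with $\deg\beta=-1$, the entire identity is homogeneous of degree $|\lambda/\mu|$, so no leading-term or filtration argument is available and the reorganization must be carried out exactly.

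The hard part will be the final combinatorial reassembly: showing that this sum of multiset peak functions (indexed through the $\flat$-shifted peak conditions) collapses to $(-1)^{|\lambda/\mu|}\bJQ_{\lambda/\mu}$, the signed generating function over $\ShMSetTab_Q(\lambda/\mu)$, and analogously to $(-1)^{|\lambda/\mu|}\bJP_{\lambda/\mu}$ in the $P$-case. Concretely I would establish the multiset analogue of the peak expansion from Step 1 — that $\bJQ_{\lambda/\mu}$ is itself $\sum_\alpha k^\alpha_{\lambda/\mu}$ times the corresponding multiset peak function — and then show that the $\flat$-reindexing produced by the antipode matches this expansion up to the global sign $(-1)^{|\lambda/\mu|}$. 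As a consistency check, setting $\beta=0$ reduces the claim to $\sum_\alpha k^\alpha_{\lambda/\mu}K_{\alpha^\flat}=(-1)^{|\lambda/\mu|}\sum_\alpha k^\alpha_{\lambda/\mu}K_\alpha$, which is precisely $\antipode(Q_{\lambda/\mu})=(-1)^{|\lambda/\mu|}Q_{\lambda/\mu}$ combined with Stembridge's peak expansion of $Q_{\lambda/\mu}$; this is where the sign genuinely originates, and the full statement is its $\beta$-deformation.

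Optionally, a cleaner conceptual route would bypass much of the $\flat$-bookkeeping: interpret $\bGQ_{\lambda/\mu}$ and $\bGP_{\lambda/\mu}$ as enriched set-valued $(P,\theta)$-partition generating functions for $P=\SD_{\lambda/\mu}$ and invoke the general principle that the antipode of such a generating function is $(-1)^{|P|}$ times the multiset generating function for the opposite labeling, since reversing the reading word of a shifted shape is exactly what induces $\flat$ on peak compositions. I would use whichever of these two formulations makes the sign accounting least painful, and would cross-check the straight-shape cases against multiplicativity of $\antipode$ on the algebra generators of $\mSymQ$ and $\mSymP$.
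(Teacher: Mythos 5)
Your strategy is genuinely different from the paper's proof, which is two lines long: since the antipode of $\mSym$ acts on the $x$-degree-$m$ component as $(-1)^m\omega$, and since $\omega\(\bGP_{\lambda/\mu}\) = \JP^{(-\beta)}_{\lambda/\mu}$ and $\omega\(\bGQ_{\lambda/\mu}\) = \JQ^{(-\beta)}_{\lambda/\mu}$ by \cite[Cor.~6.6]{LM2019}, the claim follows by substituting $\beta\mapsto-\beta$ degree by degree. Your ``optional cleaner conceptual route'' at the end is essentially this citation. The main body of your proposal instead re-derives the content of that corollary through quasisymmetric peak expansions, and the decisive step --- the ``multiset analogue of the peak expansion,'' i.e.\ that $\bJQ_{\lambda/\mu}$ and $\bJP_{\lambda/\mu}$ decompose by standardization into standard shifted set-valued tableaux paired with multiset peak functions --- is asserted rather than proved. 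That lemma is precisely what \cite[Cor.~6.6]{LM2019} encapsulates, so as written your argument has not actually reduced the problem: everything substantive is delegated to a statement you would still have to establish.

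More seriously, the sign bookkeeping, which you yourself identify as the crux, is wrong. Your $\beta=0$ consistency check asserts $\sum_\alpha k^\alpha_{\lambda/\mu} K_{\alpha^\flat} = (-1)^{|\lambda/\mu|}\sum_\alpha k^\alpha_{\lambda/\mu} K_\alpha$; since $\flat$ is a bijection on peak compositions and the $K_\alpha$ are linearly independent, both sides are $\NN$-linear combinations of the $K_\alpha$ up to the global sign, so this identity is false whenever $|\lambda/\mu|$ is odd. The error comes from taking the preceding proposition literally: as printed it omits a factor $(-1)^N$. Test $\alpha = \alpha^\flat = (1)$: the degree-one part of $K^{(\beta)}_{(1)}$ is $2M_{(1)}$ with $\antipode(2M_{(1)}) = -2M_{(1)}$, while the displayed multiset sum has degree-one part $+2M_{(1)}$. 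With the corrected formula $\antipode\(K^{(\beta)}_{\alpha}\) = (-1)^{|\alpha|}\sum_S(-\beta)^{|S|-|\alpha|}x^S$ (conditions indexed by $\alpha^\flat$), the factor $(-1)^{|\alpha|}$ combines with your coefficients $\beta^{|\alpha|-n}$, where $n := |\lambda/\mu|$, to give $(-1)^{|\alpha|}\beta^{|\alpha|-n}(-\beta)^{|S|-|\alpha|} = (-1)^n(-\beta)^{|S|-n}$ uniformly; the global sign $(-1)^n$ then factors out of the whole sum, and the residual identity to prove is sign-free: $\bJQ_{\lambda/\mu} = \sum_\gamma k^{\gamma^\flat}_{\lambda/\mu}(-\beta)^{|\gamma|-n}\,\widetilde K_\gamma$, where $\widetilde K_\gamma$ denotes the multiset peak function for $\gamma$. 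Its $\beta=0$ shadow is $\sum_\alpha k^\alpha_{\lambda/\mu} K_{\alpha^\flat} = \sum_\alpha k^\alpha_{\lambda/\mu} K_\alpha$, equivalently the peak-set reversal symmetry $k^{\alpha^\flat}_{\lambda/\mu} = k^{\alpha}_{\lambda/\mu}$ --- not your signed version. So to complete your plan you would need to (i) correct the antipode formula's sign, (ii) prove the multiset peak expansions of $\bJP_{\lambda/\mu}$ and $\bJQ_{\lambda/\mu}$, and (iii) prove the symmetry $k^{\alpha^\flat} = k^\alpha$ or match tableaux under $\flat$ directly; none of these steps is supplied, and the sign error means the plan as written cannot close.
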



\begin{proof}
This holds since
$\omega\(\bGP_{\lambda/\mu}\)  = \JP^{(-\beta)}_{\lambda/\mu}
$
and
$
\omega\(\bGQ_{\lambda/\mu}\)  =  \JQ^{(-\beta)}_{\lambda/\mu}$ by \cite[Cor.~6.6]{LM2019}.
\end{proof}

A partition of a set $S$ is a set $\Pi$ of disjoint nonempty blocks $B \subseteq S$
with $S = \bigsqcup_{B \in \Pi} B$.
Choose strict partitions $\mu \subseteq \lambda$.
A \defn{semistandard shifted bar tableau} of shape $\lambda/\mu$
is a pair $T=(V,\Pi)$, where $V$ is a semistandard shifted tableau\footnote{That is, a semistandard shifted set-valued tableau whose entries are all sets with exactly one element.
} of shape $\lambda/\mu$
and $\Pi$ is a partition of   $\SD_{\lambda/\mu}$ into subsets of adjacent positions 
 containing the same entry in $V$. One might draw this as a picture like
\be\label{colors-eq}
\begin{young}[13pt][c] 
 , & ]=![cyan!75]2 & ==![cyan!75]  & =]![cyan!75]  \ynobottom & ![pink!75]3'\ynobottom  \\ 
== \cdot \ynotop \ynobottom & \cdot \ynobottom & ]=]![red!75] 1  & ]=]![yellow!75] 1 & ]=]![pink!75] \ynotop & ![magenta!75]3 
\end{young}
\quad
\text{to represent}
\quad
 (V,\Pi) = \(\hspace{0.5mm} \begin{ytableau}
 \none &  2 &  2 &  2 &  3'\\
\none[\cdot]   & \none[\cdot] &  1 &   1 &  3' &  3
 \end{ytableau}\ , 
\begin{young}[13pt][c] 
 , & ]=  & ==  & =]  \ynobottom & \ynobottom  \\ 
== \cdot \ynotop \ynobottom & \cdot \ynobottom & ]=]   & ]=]  & ]=]  \ynotop &  
\end{young}\hspace{0.5mm}
\)
\ee
 when $\lambda = (6,4)$ and $\mu=(2)$.
Let
$ |T| := |\Pi|$ and $ x^T := \prod_{i\geq 1} x_i^{b_i}$
where $b_i$ is the number of blocks in $\Pi$ containing $i$ or $i'$.
Our example \eqref{colors-eq} has   $|T| = 5$ and $x^T = x_1^2 x_2 x_3^2$.

Let $\ShBTQ(\lambda/\mu)$ be the set of semistandard shifted bar tableaux of shape $\lambda/\mu$
and let $\ShBTP(\lambda/\mu)$
be the subset of such tableaux with no primed entries in diagonal positions.
Then define
\[
 \bjp_{\lambda/\mu} := \sum_{T\in \ShBTP(\lambda/\mu)} \beta^{|\lambda/\mu|-|T|} x^{T} 
 \quand
\bjq_{\lambda/\mu} := \sum_{T\in \ShBTQ(\lambda/\mu)} \beta^{|\lambda/\mu| -|T|} 
x^{T} .
\]
These generating functions were first considered as part of some conjectural formulas in \cite[\S7]{ChiuMarberg}.

\begin{proposition}[\cite{LM2022}]
If $\mu\subseteq\lambda$ are strict partitions then 
\[
\antipode\(\bgp_{\lambda/\mu}\)  = (-1)^{|\lambda/\mu|} \bjp_{\lambda/\mu}
\quand
\antipode\(\bgq_{\lambda/\mu}\)  = (-1)^{|\lambda/\mu|} \bjq_{\lambda/\mu}.
\]
\end{proposition}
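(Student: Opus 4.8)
The plan is to follow the proof of Proposition~\ref{GP-GQ-S-prop} almost verbatim, with the shifted bar tableaux playing the role that the multiset-valued tableaux play there. I would first record the purely formal engine. All four series $\bgp_{\lambda/\mu}$, $\bgq_{\lambda/\mu}$, $\bjp_{\lambda/\mu}$, $\bjq_{\lambda/\mu}$ lie in $\MSym$ and are homogeneous of degree $P:=|\lambda/\mu|$ once we set $\deg(\beta)=\deg(x_i)=1$. Because $\{s_\nu:|\nu|=d\}$ is a $\ZZ[\beta]$-basis of the ordinary-degree-$d$ part of $\MSym$, and because both $\antipode$ and $\omega$ are $\ZZ[\beta]$-linear and preserve ordinary degree, \eqref{qsym-antipode-eq} shows that $\antipode$ acts as $(-1)^d\omega$ on that part. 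Writing a degree-$P$ element $F=\sum_d\beta^{\,P-d}c_d$ with each $c_d$ a $\beta$-free symmetric function of ordinary degree $d$, this yields
\[
\antipode(F)=\sum_d(-1)^d\beta^{\,P-d}\omega(c_d)=(-1)^P\,\omega(F)\big|_{\beta\mapsto-\beta},
\]
which is exactly the mechanism silently invoked in Proposition~\ref{GP-GQ-S-prop}.

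The substantive step is to identify the images of the dual functions under $\omega$. In precise analogy with the identities $\omega(\bGP_{\lambda/\mu})=\JP^{(-\beta)}_{\lambda/\mu}$ and $\omega(\bGQ_{\lambda/\mu})=\JQ^{(-\beta)}_{\lambda/\mu}$ of \cite{LM2019} that drive Proposition~\ref{GP-GQ-S-prop}, I would establish
\[
\omega(\bgp_{\lambda/\mu})=\sum_{T\in\ShBTP(\lambda/\mu)}(-\beta)^{\,P-|T|}x^T
\quand
\omega(\bgq_{\lambda/\mu})=\sum_{T\in\ShBTQ(\lambda/\mu)}(-\beta)^{\,P-|T|}x^T,
\]
so that $\omega$ carries the dual $K$-theoretic $P$- and $Q$-functions to the bar-tableau generating functions with $\beta$ negated. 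This is the dual counterpart of \cite[Cor.~6.6]{LM2019}: there the set-valued model of $\bGP$, $\bGQ$ is traded for the multiset-valued model, and here the shifted reverse plane partition model of $\bgp$, $\bgq$ must be traded for the shifted bar tableau model of $\bjp$, $\bjq$. The natural direct route is to build a weight-preserving correspondence between $\MRPP_P(\lambda/\mu)$ and $\ShBTP(\lambda/\mu)$ (and likewise for the $Q$-versions) under which the statistic $|\wtmrpp(T)|$ matches the block count $|T|$ and which realizes the $\omega$-involution; this is the content supplied by \cite{LM2022}. Reconciling these two genuinely different tableau models is the main obstacle, since every other ingredient is formal.

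Granting the $\omega$-images, the proposition is immediate. Replacing $\beta$ by $-\beta$ in the displayed expression for $\omega(\bgp_{\lambda/\mu})$ turns each factor $(-\beta)^{P-|T|}$ back into $\beta^{P-|T|}$, producing exactly $\bjp_{\lambda/\mu}$; substituting into the engine gives
\[
\antipode(\bgp_{\lambda/\mu})=(-1)^P\,\omega(\bgp_{\lambda/\mu})\big|_{\beta\mapsto-\beta}=(-1)^{|\lambda/\mu|}\bjp_{\lambda/\mu}.
\]
The identical computation with $\ShBTQ$ in place of $\ShBTP$ and $\bgq$, $\bjq$ in place of $\bgp$, $\bjp$ yields $\antipode(\bgq_{\lambda/\mu})=(-1)^{|\lambda/\mu|}\bjq_{\lambda/\mu}$, completing the argument. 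Note that the $\omega$-image identities are logically equivalent to the proposition via the (reversible) engine, so the real work is entirely in that combinatorial step.
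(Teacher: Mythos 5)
Your proposal is correct and matches the paper's proof: the paper likewise reduces the statement to the identities $\omega(\bgp_{\lambda/\mu}) = \jp^{(-\beta)}_{\lambda/\mu}$ and $\omega(\bgq_{\lambda/\mu}) = \jq^{(-\beta)}_{\lambda/\mu}$, which it quotes from \cite[Thms.~1.4 and 1.5]{LM2022}, and then applies the same formal relation $\antipode = (-1)^{\deg}\omega$ coming from \eqref{qsym-antipode-eq}. Your write-up merely makes explicit the graded ``engine'' $\antipode(F) = (-1)^{P}\,\omega(F)|_{\beta\mapsto-\beta}$ that the paper's one-line proof leaves implicit, and correctly identifies the combinatorial reconciliation of the two tableau models as the content delegated to \cite{LM2022}.
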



\begin{proof}
This holds since
$\omega\(\bgp_{\lambda/\mu}\)  = \jp^{(-\beta)}_{\lambda/\mu}
$
and
$
\omega\(\bgq_{\lambda/\mu}\)  =  \jq^{(-\beta)}_{\lambda/\mu}$ by \cite[Thms.~1.4 and 1.5]{LM2022}.
\end{proof}

There are similar formulas  in the unshifted case
 for 
$\antipode\(G^{(\beta)}_{\lambda/\mu}\)$ 
and $\antipode\(g^{(\beta)}_{\lambda/\mu}\)$;
see \cite[\S8]{Patrias}.

\begin{corollary}
For all strict partitions $\mu\subseteq\lambda$ one has
\[\bjp_{\lambda/\mu} \in \MSymP,
\quad\bjq_{\lambda/\mu} \in \MSymQ,
\quad\bJP_{\lambda/\mu} \in \mSymP,
\quand
\bJQ_{\lambda/\mu} \in \mSymQ.\]
Moreover, the form in Proposition~\ref{sh-form-prop}
has
$ [ \bjp_\lambda, \bJQ_\mu] = [ \bjq_\lambda, \bJP_\mu] = \delta_{\lambda\mu}$
for all $\lambda$, $\mu$. 

\end{corollary}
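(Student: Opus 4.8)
The plan is to deduce every assertion formally from the antipode formulas established immediately above together with the duality structure of Theorem~\ref{gg-thm}. For the four membership claims, I would first rewrite each function as a signed antipode of a known element: Proposition~\ref{GP-GQ-S-prop} gives $\bJP_{\lambda/\mu} = (-1)^{|\lambda/\mu|}\antipode(\bGP_{\lambda/\mu})$ and $\bJQ_{\lambda/\mu} = (-1)^{|\lambda/\mu|}\antipode(\bGQ_{\lambda/\mu})$, and the companion proposition for the dual functions gives $\bjp_{\lambda/\mu} = (-1)^{|\lambda/\mu|}\antipode(\bgp_{\lambda/\mu})$ and $\bjq_{\lambda/\mu} = (-1)^{|\lambda/\mu|}\antipode(\bgq_{\lambda/\mu})$. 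Since $\bGP_{\lambda/\mu}\in\mSymP$ and $\bGQ_{\lambda/\mu}\in\mSymQ$, and these are LC-Hopf subalgebras of $\mSym$ (hence stable under its antipode), the first two memberships follow at once. For the other two I would first note that the skew functions already lie in the relevant modules: Proposition~\ref{abcd-prop} expands $\bgp_{\lambda/\mu}$ (resp.\ $\bgq_{\lambda/\mu}$) as a finite $\ZZ[\beta]$-linear combination of the $\bgp_\nu$ (resp.\ $\bgq_\nu$), so $\bgp_{\lambda/\mu}\in\MSymP$ and $\bgq_{\lambda/\mu}\in\MSymQ$. Because Theorem~\ref{gg-thm} asserts these are Hopf subalgebras of $\MSym$, they too are closed under the antipode, giving $\bjp_{\lambda/\mu}\in\MSymP$ and $\bjq_{\lambda/\mu}\in\MSymQ$.

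For the pairing identity I would use the fact that a Hopf-algebra duality makes the two antipodes adjoint, so that under the form of Theorem~\ref{gg-thm} (extended to $\QQ(\beta)$ as in Proposition~\ref{sh-form-prop}) one has $[\antipode f, g] = [f, \antipode g]$. Combining this with the antipode formulas yields
\[
[\bjp_\lambda, \bJQ_\mu] = (-1)^{|\lambda|+|\mu|}\,[\antipode(\bgp_\lambda), \antipode(\bGQ_\mu)] = (-1)^{|\lambda|+|\mu|}\,[\bgp_\lambda, \antipode^2(\bGQ_\mu)].
\]
Since $\mSym$ is commutative, its antipode is an involution, so $\antipode^2(\bGQ_\mu)=\bGQ_\mu$ and the right side equals $(-1)^{|\lambda|+|\mu|}\,[\bgp_\lambda,\bGQ_\mu] = (-1)^{|\lambda|+|\mu|}\delta_{\lambda\mu} = \delta_{\lambda\mu}$, the sign being $+1$ precisely when $\lambda=\mu$. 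The identical computation with the roles of $P$ and $Q$ interchanged (now pairing $\MSymQ$ against $\mSymP$) gives $[\bjq_\lambda, \bJP_\mu]=\delta_{\lambda\mu}$.

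The argument is essentially formal, so I do not expect a deep obstacle; the points needing care are all bookkeeping. First, I would need to confirm that the adjointness $[\antipode f,g]=[f,\antipode g]$ is legitimately available for the $\QQ(\beta)$-form of Proposition~\ref{sh-form-prop}: this holds because that form restricts to the genuine Hopf dualities $\MSymP\times\mSymQ$ and $\MSymQ\times\mSymP$ of Theorem~\ref{gg-thm}, for which the antipode on one factor is by construction adjoint to the antipode on the other. Second, the application of $\antipode^2=\id$ must be justified, which is immediate from the commutativity of $\mSym$. A final detail is ensuring the $\beta$-exponents in Proposition~\ref{abcd-prop} are nonnegative, so that the skew functions genuinely lie in the $\ZZ[\beta]$-spans $\MSymP$ and $\MSymQ$ rather than merely their $\QQ(\beta)$-extensions; this follows from the grading conventions, which force $b_{\lambda\mu}^\nu=0$ unless $|\nu|\geq|\lambda|+|\mu|$.
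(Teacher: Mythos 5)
Your proposal is correct and follows essentially the same route as the paper: the membership claims come from the antipode formulas together with the fact that $\MSymP$, $\MSymQ$, $\mSymP$, and $\mSymQ$ are closed under $\antipode$, and the pairing identity comes from the adjointness of the antipodes under the duality of Theorem~\ref{gg-thm} plus the involutivity of the antipode on a commutative Hopf algebra, with the sign $(-1)^{|\lambda|+|\mu|}$ absorbed since the delta vanishes unless $\lambda=\mu$. The only (immaterial) difference is that you apply $\antipode^2=\id$ in the second coordinate while the paper applies it in the first, and you make explicit the nonnegativity of the $\beta$-exponents in Proposition~\ref{abcd-prop}, which the paper leaves implicit.
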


\begin{proof}
The containments hold since $\MSymP$, $\MSymQ$, $\mSymP$, and $\mSymQ$
are all    closed under $\antipode$.
 The antipode of any commutative Hopf algebra is an involution \cite[Cor.~1.4.12]{GrinbergReiner}
 so Theorem~\ref{gg-thm}
implies $(-1)^{|\lambda|+|\mu|} [ \bjp_\lambda, \bJQ_\mu]  =  [ \antipode (\bgp_\lambda), \antipode(\bGQ_\mu)]
= [  \antipode^2(\bgp_\lambda), \bGQ_\mu] =   [  \bgp_\lambda, \bGQ_\mu]
= \delta_{\lambda\mu}$.
One derives the identity
 $[ \bjq_\lambda, \bJP_\mu] = \delta_{\lambda\mu}$ similarly.
\end{proof}

Let $\mathbf{GP} \subsetneq \mSymP$ and $\mathbf{GQ}  \subsetneq \mSymQ$
denote the proper $\ZZ[\beta]$-submodules
with $\{ \bGP_\lambda\}$ and $\{\bGQ_\lambda\}$ as bases, rather than pseudobases, where $\lambda$ ranges over all strict partitions.

\begin{proposition}\label{bialgebras-thm}
Both $\mathbf{GP} $ and $\mathbf{GQ}$
are sub-bialgebras of $\mSym$ but not Hopf algebras.
\end{proposition}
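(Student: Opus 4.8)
The plan is to check the three closure properties that make $\mathbf{GP}$ a sub-bialgebra of $\mSym$ and then to rule out a Hopf structure using the antipode formula of Proposition~\ref{GP-GQ-S-prop}. I would write the argument for $\mathbf{GP}$; the case of $\mathbf{GQ}$ is identical after substituting $\bGQ$, $\bgq$, $\bJQ$ for $\bGP$, $\bgp$, $\bJP$.

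\textbf{Sub-bialgebra.} The unit $\bGP_\emptyset=1$ lies in $\mathbf{GP}$, and the counit of $\mSym$ restricts correctly since $\bGP_\lambda$ has constant term $\delta_{\lambda\emptyset}$. Closure under the product reduces, via the first identity in \eqref{ab-eq}, to showing that for fixed strict $\lambda,\mu$ only finitely many $\nu$ satisfy $a^\nu_{\lambda\mu}\neq 0$. Closure under the coproduct reduces, via \eqref{Delta-eq}, to showing that $\Delta(\bGP_\nu)=\sum_\lambda \bGP_\lambda\otimes\bGP_{\nu\ss\lambda}$ lies in the genuine (uncompleted) tensor product $\mathbf{GP}\otimes\mathbf{GP}$: here only $\lambda\subseteq\nu$ contribute (finitely many), and by \eqref{ss-eq} each $\bGP_{\nu\ss\lambda}$ is a finite $\ZZ[\beta]$-combination of skew functions $\bGP_{\nu/\mu'}$, so it suffices that each $\bGP_{\nu/\mu'}$ be a finite $\ZZ[\beta]$-combination of straight $\bGP_\mu$. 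Both reductions are instances of the bounded-shape property of the $K$-theoretic structure constants, namely that $a^\nu_{\lambda\mu}=0$ unless $\nu$ fits in the bounding box of $\lambda$ and $\mu$, and that the straight expansion of $\bGP_{\nu/\mu'}$ only involves $\mu\subseteq\nu$, which I would import from the combinatorial $K$-theoretic Littlewood--Richardson rules in the cited references. Granting these, $\mathbf{GP}$ is a sub-bialgebra, and the same holds for $\mathbf{GQ}$ using the second and fourth lines of \eqref{ab-eq}.

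\textbf{Not a Hopf algebra.} If $\mathbf{GP}$ carried an antipode, then since $\mathbf{GP}\hookrightarrow\mSym$ is a bialgebra morphism into a Hopf algebra, the standard uniqueness of antipodes forces that antipode to equal the restriction of the antipode $\antipode$ of $\mSym$; in particular $\antipode(\bGP_\lambda)\in\mathbf{GP}$ for every strict $\lambda$. I would contradict this at $\lambda=(1)$. By Proposition~\ref{GP-GQ-S-prop}, $\antipode(\bGP_{(1)})=-\bJP_{(1)}$, and the single-variable specialization $x_2=x_3=\cdots=0$ sends $\bJP_{(1)}$ to $\sum_{k\geq1}(-\beta)^{k-1}x_1^{k}$, a non-polynomial power series, because the one box of shape $(1)$ may hold an arbitrarily large multiset. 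By contrast, this specialization sends each $\bGP_\mu$ to a polynomial in $x_1$ (its one-variable tableaux have at most two entries per box, so the exponents are bounded), hence sends every finite $\ZZ[\beta]$-combination of the $\bGP_\mu$ to a polynomial. Therefore $\bJP_{(1)}\notin\mathbf{GP}$, so $\mathbf{GP}$ is not closed under $\antipode$ and is not a Hopf algebra; the identical specialization argument with $\bJQ_{(1)}$ disposes of $\mathbf{GQ}$.

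\textbf{Main obstacle.} The genuinely substantive step is the finiteness (bounded-shape) input in the first paragraph, and it is not formal. Because $\mSymP\supsetneq\mathbf{GP}$ is only an \emph{LC}-Hopf algebra, its product and coproduct a priori take values in completed tensor products, and the duality with the honest Hopf algebras $\MSymQ$ and $\MSymP$ from Theorem~\ref{gg-thm} only yields finiteness of the structure constants in the \emph{opposite} index from the one needed to keep $\mathbf{GP}$ closed (it bounds the $(\lambda,\mu)$ for fixed $\nu$, not the $\nu$ for fixed $\lambda,\mu$). So closure really rests on explicit containment bounds for the $\bGP$- and $\bGQ$-structure constants, and pinning down the cleanest citation or combinatorial argument for these bounds is the crux; by contrast, the failure of the antipode is immediate once the single-variable specialization is in hand.
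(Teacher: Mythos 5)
Your architecture matches the paper's proof in two of its three parts. For product closure you import exactly the finiteness theorems the paper cites (\cite{CTY} et al.\ for the $\bGP$-functions, \cite[Thm 1.6]{LM2022} for the $\bGQ$-functions), and your non-Hopf argument is the paper's almost verbatim: the paper also specializes at $x_2=x_3=\cdots=0$, computes $\bGP_{(n)}\mapsto x_1^n$ and $\bGP_\mu\mapsto 0$ for $\ell(\mu)>1$ while $\antipode(\bGP_{(1)})$ specializes to the non-polynomial series $\tfrac{-x_1}{1+\beta x_1}$, and concludes the expansion must be infinite. Your explicit appeal to the fact that a bialgebra morphism between Hopf algebras intertwines antipodes is the right way to make rigorous what the paper compresses into ``it suffices to check.''

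The gap is in coproduct closure. You propose to import the containment bound for the straight expansion of $\bGP_{\nu/\mu'}$ from ``the combinatorial $K$-theoretic Littlewood--Richardson rules in the cited references,'' but no such rules exist: the paper itself (Section~\ref{open-sect}) states that finding combinatorial interpretations of $\widehat a_{\lambda\mu}^\nu$ and $\widehat b_{\lambda\mu}^\nu$ is an open problem, and even positivity of the coproduct side is only Conjectures~\ref{co-gp-conj} and \ref{co-gq-conj} (the containment $\bGP_{\lambda/\mu}\in\mGP$ is likewise only conjectural, per the remark citing \cite[Conj. 5.15]{LM2019}). So this step would stall as written, and your closing diagnosis is inverted: it is the \emph{product} finiteness that is the deep imported theorem, while the coproduct finiteness has a short formal proof, which is exactly how the paper proceeds. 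Namely, by \eqref{Delta-eq} and \eqref{ab-eq}, closure under $\Delta$ is equivalent to $\widehat a_{\lambda\mu}^\nu$ (resp.\ $\widehat b_{\lambda\mu}^\nu$) vanishing for all but finitely many pairs $(\lambda,\mu)$ once $\nu$ is fixed; this holds because $\widehat a_{\lambda\mu}^\nu=0$ when $\lambda\not\subseteq\nu$ (by the definition of $\bGQ_{\nu\ss\lambda}$, which is set to zero in that case) and $\widehat a_{\lambda\mu}^\nu=\widehat a_{\mu\lambda}^\nu$ since $\mSym$ is cocommutative, forcing $\mu\subseteq\nu$ as well. Your reduction through \eqref{ss-eq} to skew functions is harmless --- that relation is unitriangular with respect to containment, hence invertible --- but it routes you toward an expansion for which the literature offers nothing unconditional; the symmetry argument above is the missing idea. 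You are correct, and it is a good observation, that the duality of Theorem~\ref{gg-thm} only bounds the structure constants in the opposite index from the one needed.
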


\begin{proof}
We already know that $\{ \bGP_\lambda\}$ and $\{\bGQ_\lambda\}$ are pseudobases for LC-Hopf subalgebras of $\mSym$.
This result makes three nontrivial additional claims.
First, the products  $\bGP_\lambda \bGP_\mu$ and $\bGQ_\lambda \bGQ_\mu$
always expand as finite linear combinations of $\bGP$- and $\bGQ$-functions.
For the $\bGP$-functions, this was first shown in \cite{CTY}; for other proofs, see \cite[\S4]{HKPWZZ}, 
\cite[\S1.2]{Marberg2021}, or \cite[\S8]{PechenikYong}.
For the $\bGQ$-functions, the desired finiteness property is \cite[Thm 1.6]{LM2022}.

Second, the coproducts $\Delta(\bGP_\nu)$ and $\Delta(\bGQ_\nu)$ are always 
finite linear combinations of tensor products of the form $\bGP_\lambda\otimes \bGP_\mu$ and $\bGQ_\lambda\otimes \bGQ_\mu$.
By \eqref{Delta-eq} and \eqref{ab-eq} this is equivalent to the numbers $\widehat a_{\lambda\mu}^\nu$ and $\widehat b_{\lambda\mu}^\nu$
being nonzero for only finitely many pairs $(\lambda,\mu)$ when $\nu$ is fixed.
This holds since both numbers are zero if $\lambda \not\subseteq \nu$ (by definition)
or if $\mu \not\subseteq\nu$ (since $\widehat a_{\lambda\mu}^\nu=\widehat a_{\mu\lambda}^\nu$ and $\widehat b_{\lambda\mu}^\nu=\widehat b_{\mu\lambda}^\nu$ as $\mSym$ is cocommutative).

To show that $\mathbf{GP} $ and $\mathbf{GQ}$  are not Hopf algebras,
it suffices to check that $\bJP_\lambda =\pm \antipode(\bGP_\lambda)$ and $\bJQ_\lambda = \pm \antipode(\bGQ_\lambda)$ 
may fail to be finite linear combinations of 
$\bGP$- and $\bGQ$-functions. 
This can already be seen for $\lambda =(1)$ by setting $x_i=0$ for all $i>1$.
Under this specialization one has $\bJP_{(1)} = \frac{-x_1}{1+\beta x_1}$,
$\bJQ_{(1)} = \frac{-2x_1+\beta x_1^2}{1+\beta x_1}$,
$\bGP_{(n)} = x_1^n$, and $\bGQ_{(n)} = (2  + \beta x_1)x_1^n$ for all $n \in \PP$,
while $\bGP_{\mu} = \bGQ_{\mu} =0$ whenever $\ell(\mu)>1$, so the relevant expansions are clearly infinite.
\end{proof}

\begin{remark}
The span of the stable Grothendieck polynomials $\{G^{(\beta)}_\lambda\}$ is a bialgebra by \cite[Cor.~6.7]{Buch2002}.
A similar argument shows that this bialgebra is also not a Hopf algebra,
as $G^{(\beta)}_{(1)}= \bGP_{(1)} $.
\end{remark}

%
%
%
%

 It is often of interest to derive cancellation-free antipode formulas.
 We should point out that the results in this section are mostly not of this form,
 as we do not know how to expand
  $\bJP_{\lambda/\mu}$, $\bJQ_{\lambda/\mu}$, $\bjp_{\lambda/\mu}$, and $\bjq_{\lambda/\mu}$
 in the respective $\{ \bGP_\nu\}$, $\{ \bGQ_\nu\}$, $\{ \bgp_\nu\}$, and $\{ \bgq_\nu\}$ bases.

We have also not discussed
the multi-Malvenuto-Reutenauer Hopf algebras $\mMPR$ and $\MMPR$.
The problem of finding cancellation-free antipode formulas for these Hopf algebras appears to be open.
Progress on this question would give $K$-theoretic generalizations of the results in \cite[\S5]{AguiarSottile}.

\subsection{Positivity properties}\label{open-sect}

To conclude this article, we collect some open problems and conjectures related to positivity properties of our various symmetric functions.
%
%
Let $\mG$ and $\mg$ denote the respective (finite) $\NN[\beta]$-linear spans of 
the stable Grothendieck polynomials $\{G^{(\beta)}_\lambda\}$
and their dual versions $\{g^{(\beta)}_\lambda\}$, with $\lambda$ ranging over all partitions.
Buch \cite[Cors. 5.5 and 6.7]{Buch2002} derives Littlewood-Richardson rules 
for (co)products of stable Grothendieck polynomials, which
imply that $G^{(\beta)}_\lambda G^{(\beta)}_\mu \in \mG$ and $\Delta(G^{(\beta)}_{\lambda}) \in   \mG\otimes \mG$
for all partitions $\lambda$ and $\mu$.

Similarly, let $\mGP$, $\mGQ$, $\mgp$, and $\mgq$ be the respective (finite) $\NN[\beta]$-linear spans of $\{\bGP_\lambda\}$,
$\{\bGQ_\lambda\}$, $\{\bgp_\lambda\}$, 
and $\{\bgq_\lambda\}$, with $\lambda$ ranging over all strict partitions.
It is known that $\bGP_\lambda\bGP_\mu \in \mGP$ and $\bGQ_\lambda\bGQ_\mu \in \mGQ$
for all strict partitions $\lambda$ and  $\mu$, or equivalently that the integers $a_{\lambda\mu}^\nu$ and $b_{\lambda\mu}^\nu$ in \eqref{ab-eq}
are always nonnegative \cite[Thm.~1.6]{LM2022}.
By Proposition~\ref{abcd-prop}, this implies that we always have $\bgp_{\lambda/\mu} \in   \mgp$ and $\bgq_{\lambda/\mu} \in   \mgq$.


Computations support some other conjectural positivity properties:

\begin{conjecture}\label{co-gp-conj}
One has $\bGP_{\lambda\ss \mu} \in \mGP$ and $\bgq_{\lambda}\bgq_\mu   \in \mgq$
for all strict partitions $\lambda$, $\mu$.
\end{conjecture}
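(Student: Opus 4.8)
The plan is to first collapse both assertions into a single positivity statement. By the expansion $\bGP_{\nu\ss\lambda}=\sum_\mu \widehat b_{\lambda\mu}^\nu \beta^{|\lambda|+|\mu|-|\nu|}\bGP_\mu$ recorded in \eqref{ab-eq}, the claim $\bGP_{\lambda\ss\mu}\in\mGP$ for all strict $\lambda,\mu$ is, after relabeling, exactly the assertion that $\widehat b_{\mu\kappa}^{\lambda}\in\NN$ for all strict partitions $\mu,\kappa,\lambda$. On the other hand, Proposition~\ref{abcd-prop} gives $\bgq_\lambda\bgq_\mu=\sum_\nu \widehat b_{\lambda\mu}^\nu\beta^{|\lambda|+|\mu|-|\nu|}\bgq_\nu$, so $\bgq_\lambda\bgq_\mu\in\mgq$ is governed by the \emph{same} integers and is again equivalent to $\widehat b_{\lambda\mu}^\nu\in\NN$. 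Thus the two halves of the conjecture are a single statement read on the two sides of the duality of Theorem~\ref{gg-thm}, and it suffices to prove $\widehat b_{\lambda\mu}^\nu\ge 0$.

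My main approach would be to construct a cancellation-free combinatorial rule for $\widehat b_{\lambda\mu}^\nu$, directly mirroring the proof of product positivity for $\bGP$ and $\bGQ$ in \cite[Thm.~1.6]{LM2022}. Working on the $\bgq$-product side, where $\bgq_{\lambda/\mu}$ is a generating function over shifted reverse plane partitions (Definition~\ref{bgpq-def}), I would look for a weight-preserving insertion or $K$-theoretic shifted jeu-de-taquin operation that rectifies a product of two such generating functions into the $\bgq_\nu$ basis with structure constants that are visibly nonnegative polynomials in $\beta$. As a consistency check, the $\beta=0$ specialization of $\bgq_\lambda\bgq_\mu$ recovers the classical product $Q_\lambda Q_\mu$, whose $Q_\nu$-coefficients are the nonnegative shifted Littlewood--Richardson coefficients; the substance of the conjecture is the nonnegativity of the higher $\beta$-degree ($K$-theoretic) corrections indexed by $|\nu|<|\lambda|+|\mu|$. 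Dually one could instead seek a shifted $K$-theoretic Littlewood--Richardson rule for the coproduct $\Delta(\bGP_\nu)$ in \eqref{Delta-eq}, and the change-of-basis identities \eqref{gq-to-gp-eq} and \eqref{GQ-to-GP-eq} would let one transfer such a rule between the $P$- and $Q$-flavored objects.

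The hard part will be the absence of monomial positivity on the dual side: unlike $\bGP$ and $\bGQ$, the functions $\bgq_{\lambda/\mu}$ carry signs $(-\beta)^{|\lambda/\mu|-|\wtmrpp(T)|}$ in Definition~\ref{bgpq-def}, so no term-by-term monomial argument is available and the required nonnegativity of $\widehat b_{\lambda\mu}^\nu$ is a genuine cancellation phenomenon. A further difficulty is that the relevant constant is a \emph{coproduct} constant for $\bGP$: whereas product positivity for Schubert structure sheaves on isotropic Grassmannians is controlled by Brion-type theorems, the coproduct has no equally standard geometric positivity statement to invoke. I therefore expect the crux to be the construction of the insertion/rectification bijection itself, and in particular proving that it is well defined and weight-preserving in the $K$-theoretic multiset setting, rather than the surrounding bookkeeping. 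A geometric route through the connective $K$-theory of maximal orthogonal Grassmannians, should the coproduct be realizable by a natural pullback, is a plausible but more speculative alternative.
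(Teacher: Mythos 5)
You have not proved anything here, and in fairness neither does the paper: the statement you were given is Conjecture~\ref{co-gp-conj}, which the paper leaves open. Your preliminary reduction is correct and is in fact exactly the remark the paper makes immediately after stating the conjecture: by \eqref{ab-eq} the claim $\bGP_{\lambda\ss\mu}\in\mGP$ is (after relabeling) the nonnegativity $\widehat b_{\mu\kappa}^{\lambda}\geq 0$, and by Proposition~\ref{abcd-prop} the claim $\bgq_\lambda\bgq_\mu\in\mgq$ is governed by the same integers, so the two halves collapse to the single inequality $\widehat b_{\lambda\mu}^\nu\geq 0$ read across the duality of Theorem~\ref{gg-thm}. (Finiteness of the relevant expansions, which you need for membership in the \emph{finite} $\NN[\beta]$-spans $\mGP$ and $\mgq$, is also fine: $\widehat b_{\lambda\mu}^\nu$ vanishes unless $\lambda,\mu\subseteq\nu$ and $|\nu|\leq|\lambda|+|\mu|$, as the paper notes in the proof of Proposition~\ref{bialgebras-thm}.) Your $\beta=0$ consistency check against the classical shifted Littlewood--Richardson coefficients for $Q_\lambda Q_\mu$ is likewise correct, and correctly locates the content of the conjecture in the higher $\beta$-degree terms.

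The genuine gap is that everything after the reduction is a research programme, not an argument. The proposed centerpiece --- a weight-preserving insertion or $K$-theoretic shifted jeu-de-taquin that rectifies $\bgq_\lambda\bgq_\mu$ into the $\bgq_\nu$ basis with visibly nonnegative structure constants --- is never constructed, and your own text concedes that the crux (well-definedness of the bijection, and the fact that the required positivity is a cancellation phenomenon in the signed generating function of Definition~\ref{bgpq-def}) is unresolved. Note also that \cite[Thm.~1.6]{LM2022}, which you propose to mirror, establishes positivity of the \emph{product} constants $b_{\lambda\mu}^\nu$ for the monomial-positive functions $\bGQ_\lambda$; the constants at issue here are the \emph{coproduct} constants $\widehat b_{\lambda\mu}^\nu$, and the paper explicitly observes that it does not know how to extract this from the geometric interpretation of $\bGP_\lambda$ in \cite[\S8.3]{IkedaNaruse} --- so there is no existing template whose adaptation would be routine. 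As it stands your proposal restates the conjecture in its sharpest known form and identifies plausible attack routes, but supplies no proof of the inequality $\widehat b_{\lambda\mu}^\nu\geq 0$ that the whole statement rests on.
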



\begin{conjecture}\label{co-gq-conj}
One has $\bGQ_{\lambda\ss \mu} \in \mGQ$ and $\bgp_{\lambda}\bgp_\mu   \in \mgp$
for all strict partitions $\lambda$, $\mu$.
\end{conjecture}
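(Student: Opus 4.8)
The plan is to collapse the two displayed assertions into a single positivity statement and then attack that statement combinatorially. By Proposition~\ref{abcd-prop}, the product $\bgp_\lambda\bgp_\mu = \sum_\nu \widehat a_{\lambda\mu}^\nu\,\beta^{|\lambda|+|\mu|-|\nu|}\bgp_\nu$ and the expansion $\bGQ_{\nu\ss\lambda}=\sum_\mu \widehat a_{\lambda\mu}^\nu\,\beta^{|\lambda|+|\mu|-|\nu|}\bGQ_\mu$ from \eqref{ab-eq} are governed by the \emph{same} integers $\widehat a_{\lambda\mu}^\nu$. Since the $\bgp$-functions are homogeneous of bounded degree once $\deg\beta=1$, and $\bGQ_{\nu\ss\lambda}$ is homogeneous of degree $|\nu|-|\lambda|$ once $\deg\beta=-1$, both expansions are automatically \emph{finite} and the exponents $|\lambda|+|\mu|-|\nu|$ are automatically nonnegative. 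Hence membership in $\mgp$ and in $\mGQ$ is not a finiteness issue at all: the entire content of the conjecture is the single inequality $\widehat a_{\lambda\mu}^\nu\geq 0$ for all strict partitions $\lambda,\mu,\nu$, and in particular the product half $\bgp_\lambda\bgp_\mu\in\mgp$ and the coproduct half $\bGQ_{\nu\ss\lambda}\in\mGQ$ are logically equivalent. It therefore suffices to establish either one.

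I would reduce the coproduct half further to a skew-to-straight positivity. By the definition \eqref{ss-eq}, $\bGQ_{\nu\ss\lambda}$ is an $\NN[\beta]$-combination of the skew functions $\bGQ_{\nu/\mu}$, so it is enough to prove that every $\bGQ_{\nu/\mu}$ expands with coefficients in $\NN[\beta]$ into the straight-shape basis $\{\bGQ_\kappa\}$. This is the ``co''-analogue of the product positivity $b_{\lambda\mu}^\nu\geq 0$ of \cite[Thm.~1.6]{LM2022}, and the natural route is an explicit Littlewood--Richardson rule: a $K$-theoretic, shifted jeu-de-taquin or Hecke-type insertion on the semistandard shifted set-valued tableaux of \cite{IkedaNaruse} counted by $\bGQ_{\nu/\mu}$, rectifying each tableau to a canonical straight-shape object while recording the number of ``extra'' boxes as a power of $\beta$. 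Such a rule would specialize at $\beta=0$ to the classical shifted Littlewood--Richardson rule and, in the unshifted limit, to the $K$-theoretic rules of \cite{Buch2002} and \cite{CTY}. It should be stressed that the known product positivity for $\bGP$ and $\bGQ$ yields, through the skew formulas in Proposition~\ref{abcd-prop}, only the \emph{dual} facts $\bgp_{\nu/\lambda}\in\mgp$ and $\bgq_{\nu/\lambda}\in\mgq$; it says nothing about the hatted constants $\widehat a_{\lambda\mu}^\nu$, so genuinely new combinatorics is required.

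A second, more structural route is to transport the unshifted coproduct positivity of Buch \cite[Cor.~6.7]{Buch2002} through the surjective Hopf morphism $\ttheta:\mSym\to\mSymQ$ of \eqref{ttheta-eq}, using $\ttheta(G^{(\beta)}_\lambda)=\bGQ_{(\lambda+\delta)/\delta}$ with $\delta=(k-1,\dots,1,0)$. Because $\ttheta$ commutes with coproducts, applying $\ttheta\otimes\ttheta$ to $\Delta(G^{(\beta)}_\lambda)\in\mG\otimes\mG$ shows that $\Delta(\bGQ_{(\lambda+\delta)/\delta})$ is an $\NN[\beta]$-combination of tensors of \emph{skew} $\bGQ$-functions; this confirms, but does not resolve, that the skew-to-straight positivity of the previous paragraph is the real heart of the matter, since the images $\ttheta(G^{(\beta)}_\mu)$ are themselves skew. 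One would then hope to finish by expressing every straight $\bGQ_\nu$ as an $\NN[\beta]$-combination of the $\ttheta(G^{(\beta)}_\lambda)$, which is itself unclear.

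The hard part will be precisely this combinatorial core. Unlike the product rule, the coproduct (equivalently, skew) expansion for $\bGQ$ must control \emph{two} complications at once: the $K$-theoretic creation of extra boxes, visible at the level of bases in the relations \eqref{GQ-to-GP-eq}, and the shifted ``primed-diagonal'' doubling that distinguishes $\bGQ$ from $\bGP$ and that is encoded in the $\cons$-statistic of the preceding sections. At present no insertion algorithm or crystal structure is known that handles both features simultaneously while producing a manifestly finite, sign-free expansion; the available tools (shifted tableaux at $\beta=0$, unshifted $K$-theoretic insertion at the straight-shape level) each control only one of the two. Making either the direct insertion argument or the transfer through $\ttheta$ work for \emph{all} shapes, rather than for a restricted family, is therefore the main obstacle.
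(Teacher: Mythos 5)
This statement is not a theorem in the paper: it is Conjecture~\ref{co-gq-conj}, stated as open in Section~\ref{open-sect}, and the paper offers no proof of it. Your reductions are all correct, but they are exactly the paper's own surrounding remarks rather than new progress. The equivalence of the two halves via the single inequality $\widehat a_{\lambda\mu}^\nu\geq 0$ is stated verbatim after the two conjectures (``These conjectures are equivalent to the inequalities $\widehat b_{\lambda\mu}^\nu\geq0$ and $\widehat a_{\lambda\mu}^\nu\geq0$, or via \eqref{Delta-eq} to the coproduct identities\dots''), your observation that product positivity of $\bGQ$ only yields the dual skew facts $\bgp_{\nu/\lambda}\in\mgp$, $\bgq_{\nu/\lambda}\in\mgq$ is the paragraph preceding the conjectures, and your reduction to skew-to-straight positivity $\bGQ_{\nu/\mu}\in\mGQ$ is precisely the remark following them, which notes this stronger statement is itself only conjectural (it is \cite[Conj.~5.15]{LM2019}). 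So the restructuring is sound but strictly a reformulation.

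The genuine gap is that neither of your two proposed routes is carried out, and you concede as much. The insertion/jeu-de-taquin route posits a shifted $K$-theoretic rectification algorithm compatible with both the set-valued $\beta$-statistic and the primed-diagonal structure of $\bGQ$; no such algorithm is known, and inventing one is the open problem, not a step you can cite. The $\ttheta$-transfer route is also incomplete at the point you identify: applying $\ttheta\otimes\ttheta$ to $\Delta(G^{(\beta)}_\lambda)\in\mG\otimes\mG$ (which is legitimate, since $\ttheta$ restricted to $\mSym$ is a coalgebra morphism) only produces $\NN[\beta]$-positivity of $\Delta(\bGQ_{(\lambda+\delta)/\delta})$ in tensors of \emph{skew} $\bGQ$-functions, and you would still need both the skew-to-straight positivity and a positive expression of each straight $\bGQ_\nu$ in terms of the images $\ttheta(G^{(\beta)}_\mu)$ --- the latter is unknown and quite possibly false over $\NN[\beta]$. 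One further caution: your claim that membership in $\mGQ$ is ``not a finiteness issue at all'' is true but not automatic from homogeneity alone; the finiteness of the coproduct expansion (finitely many nonzero $\widehat a_{\lambda\mu}^\nu$ for fixed $\nu$) is proved in the paper in Proposition~\ref{bialgebras-thm} using $\lambda\subseteq\nu$ together with cocommutativity ($\widehat a_{\lambda\mu}^\nu=\widehat a_{\mu\lambda}^\nu$), and you should cite that argument rather than degree considerations. In short: what you have written is an accurate map of the problem and of why the obvious tools fail, but it contains no proof, and the statement remains open.
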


These conjectures are equivalent 
 to the inequalities $\widehat b_{\lambda\mu}^\nu\geq0$ and $\widehat a_{\lambda\mu}^\nu\geq0$,
or
via \eqref{Delta-eq} to the coproduct identities
$\Delta(\bGP_\lambda) \in \mGP\otimes\mGP$ and $\Delta(\bGQ_\lambda) \in \mGQ\otimes\mGQ$.
We do not know how to leverage the geometric interpretation of $\bGP_\lambda$ and $\bGQ_\lambda$ in \cite[\S8.3]{IkedaNaruse} to prove these properties.

 Littlewood-Richardson rules are known
 for the coefficients $a_{\lambda\mu}^\nu$  ; see \cite[Thm.~1.2]{CTY} or \cite[\S8]{PechenikYong}.
Outside some special cases considered in \cite{BuchRavikumar,LM2022}, the following problem is open: 
 
\begin{problem} Find combinatorial interpretations of the integers 
$b_{\lambda\mu}^\nu$, $\widehat a_{\lambda\mu}^\nu$, and $\widehat b_{\lambda\mu}^\nu$ in \eqref{ab-eq}.
\end{problem}

 \begin{remark*}
As noted in \cite[Conj. 5.15]{LM2019}, it also seems to hold that
$\bGP_{\lambda/ \mu} \in \mGP$
and $\bGQ_{\lambda/ \mu} \in \mGQ$.
By \eqref{ss-eq}, these containments would imply Conjectures~\ref{co-gp-conj} and \ref{co-gq-conj}.
The analogous property for 
stable Grothendieck polynomials $G^{(\beta)}_{\lambda/\mu}$ indexed by skew shapes 
follows from \cite[Thm.~6.9]{Buch2002}.
 \end{remark*}

The \defn{conjugate (dual) stable Grothendieck polynomials} are given by
\[
J^{(\beta)}_\lambda := (-1)^{|\lambda|} \antipode\(G^{(\beta)}_{\lambda}\) = (-1)^{|\lambda|} \omega\(G^{(-\beta)}_{\lambda}\)
\quand
j^{(\beta)}_\lambda := (-1)^{|\lambda|} \antipode\(g^{(\beta)}_{\lambda}\) =  (-1)^{|\lambda|} \omega\(g^{(-\beta)}_{\lambda}\)
\] for partitions $\lambda$. The 
second equalities in these definitions hold by \cite[Thm.~4.6]{Yeliussizov2019}.
Setting $\beta=-1$ turns  $J^{(\beta)}_\lambda$ into the \defn{weak set-valued tableau} generating function $J_\lambda$
in \cite[\S9.7]{LamPyl}.
Setting $\beta=1$ turns $j^{(\beta)}_\lambda$
 into the \defn{valued-set tableau} generating function $j_\lambda$
in \cite[\S9.8]{LamPyl}.
It follows from \cite[\S9]{LamPyl} that
\be\label{Jj-eq} J^{(\beta)}_\lambda = (-\beta)^{-|\lambda|} J_\lambda(-\beta x_1,-\beta x_2,\dots)
\quand
j^{(\beta)}_\lambda = \beta^{|\lambda|} j_\lambda(\beta^{-1} x_1,\beta^{-1} x_2,\dots).
\ee
The power series $\{J^{(\beta)}_\lambda\}$ and $\{j^{(\beta)}_\lambda\}$
are another pair of dual bases for $\mSym$ and $\MSym$ relative to the form $\langle\cdot,\cdot\rangle$,
since this inner product is  $\antipode$-invariant.

Below, we use the term \defn{Schur positive} to refer to any element of $\mSym$ that can be expressed as a
possibly infinite linear combination of Schur functions with coefficients in $\NN[\beta]$.

\begin{theorem}[\cite{LamPyl,Lenart2000}]
\label{both-thm}
For each partition $\lambda$, both
$G^{(\beta)}_\lambda$ and $j^{(\beta)}_\lambda$ are Schur positive, while
$s_\lambda$ is both a finite $\NN[\beta]$-linear combination of $g^{(\beta)}_\mu$'s
and an infinite $\NN[\beta]$-linear combination of $J^{(\beta)}_\mu$'s.
\end{theorem}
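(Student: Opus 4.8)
The plan is to deduce all four assertions from two Schur-positivity inputs together with the transpose-duality between the dual (pseudo)bases at hand. First I would record the Schur expansion of the stable Grothendieck polynomials due to Lenart \cite{Lenart2000}: one has $G^{(\beta)}_\lambda = \sum_{\mu} \hat a_{\lambda\mu}\,\beta^{|\mu|-|\lambda|}\,s_\mu$ with $\hat a_{\lambda\mu}\in\NN$, the sum running over $\mu\supseteq\lambda$, and $\hat a_{\lambda\lambda}=1$. This is precisely the Schur positivity of $G^{(\beta)}_\lambda$, and it exhibits the transition matrix to the Schur pseudobasis as unitriangular for the containment order.

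Next I would obtain the $g^{(\beta)}$-expansion of $s_\lambda$ purely by duality. Since $\{g^{(\beta)}_\mu\}$ and $\{G^{(\beta)}_\nu\}$ are dual bases for $\langle\cdot,\cdot\rangle$ and $\{s_\nu\}$ is self-dual, the coefficient of $g^{(\beta)}_\mu$ in $s_\lambda$ equals $\langle s_\lambda, G^{(\beta)}_\mu\rangle$, i.e.\ the coefficient of $s_\lambda$ in $G^{(\beta)}_\mu$, namely $\hat a_{\mu\lambda}\,\beta^{|\lambda|-|\mu|}$. Hence $s_\lambda = \sum_\mu \hat a_{\mu\lambda}\,\beta^{|\lambda|-|\mu|}\,g^{(\beta)}_\mu$, an $\NN[\beta]$-combination; it is \emph{finite} because $\hat a_{\mu\lambda}\neq 0$ forces $\mu\subseteq\lambda$, and only finitely many $\mu$ satisfy this. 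This is the third assertion. For the conjugate functions I would use the rescaling \eqref{Jj-eq}, namely $j^{(\beta)}_\lambda = \beta^{|\lambda|}\,j_\lambda(\beta^{-1}x_1,\beta^{-1}x_2,\dots)$, to transfer positivity from the valued-set tableau generating function $j_\lambda$ of \cite[\S9.8]{LamPyl}. Writing $j_\lambda = \sum_\mu d_{\lambda\mu}\,s_\mu$ with $d_{\lambda\mu}\in\NN$ and $|\mu|\leq|\lambda|$, homogeneity of $s_\mu$ gives $j^{(\beta)}_\lambda = \sum_\mu d_{\lambda\mu}\,\beta^{|\lambda|-|\mu|}\,s_\mu$, which is Schur positive; this is the second assertion. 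Finally I would repeat the duality step with the dual bases $\{j^{(\beta)}_\lambda\}$ and $\{J^{(\beta)}_\mu\}$: the coefficient of $J^{(\beta)}_\lambda$ in $s_\mu$ is $\langle j^{(\beta)}_\lambda, s_\mu\rangle = d_{\lambda\mu}\,\beta^{|\lambda|-|\mu|}$, so $s_\mu = \sum_\lambda d_{\lambda\mu}\,\beta^{|\lambda|-|\mu|}\,J^{(\beta)}_\lambda$, again $\NN[\beta]$-positive. This time $d_{\lambda\mu}\neq 0$ only forces $|\mu|\leq|\lambda|$, so for fixed $\mu$ infinitely many $\lambda$ contribute and the combination is genuinely infinite, giving the fourth assertion.

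The conceptual heart, and the only non-formal step, is the Schur positivity of the conjugate side, that is, of $j^{(\beta)}_\lambda$. This does not follow from the unitriangularity in the first paragraph by abstract nonsense: inverting a nonnegative unitriangular matrix over Young's lattice produces competing signs from chains of different lengths, so a genuine combinatorial model is needed — the valued-set tableaux of \cite[\S9.8]{LamPyl} feeding \eqref{Jj-eq}, equivalently Lenart's dual Schur expansion. Once that input is in hand, the remaining content is the transpose-duality bookkeeping and the elementary observation that $G^{(\beta)}$-expansions enlarge the indexing shape while $j^{(\beta)}$-expansions do not, which is exactly what separates the finite third assertion from the infinite fourth.
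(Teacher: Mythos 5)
Your proposal is correct and follows essentially the same route as the paper: both rest on Lenart's positive Schur expansion of $G^{(\beta)}_\lambda$ plus duality for the finite $g^{(\beta)}_\mu$-expansion of $s_\lambda$, and on the conjugate side you invoke exactly the alternative the paper itself records, namely the valued-set tableau expansion from \cite[Thm.~9.8]{LamPyl} transferred through \eqref{Jj-eq}. The only difference is the direction of the final duality step --- you derive the infinite $J^{(\beta)}_\mu$-expansion of $s_\lambda$ from the Schur positivity of $j^{(\beta)}_\lambda$, whereas the paper primarily obtains that expansion directly from \cite[Thm.~2.8]{Lenart2000} and then gets $j^{(\beta)}_\lambda$-positivity by duality --- and since the two are equivalent transposes of one another, this is an immaterial reordering.
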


\begin{proof}
A few algebraic manipulations are needed to derive this statement from \cite{LamPyl,Lenart2000}. 
First, \cite[Thm.~2.8]{Lenart2000}  expresses $s_\lambda$ as an infinite $\NN$-linear combination of $G^{(-1)}_\mu$ functions.
On substituting $x_i \mapsto \beta x_i$, dividing both sides by $\beta^{|\lambda|}$, and applying $\omega$,
this becomes a $\NN[\beta]$-linear expansion of $s_\lambda$ into $J^{(\beta)}_\mu$ functions.
By duality $j^{(\beta)}_\lambda$ is Schur positive; in view of \eqref{Jj-eq}, this also follows from \cite[Thm.~9.8]{LamPyl},
which gives the Schur expansion of $g_\lambda = \omega(j_\lambda)$.
Finally, \cite[Thm.~2.2]{Lenart2000} gives a positive combinatorial interpretation of the Schur expansion
of $G^{(\beta)}_\lambda$, 
and by duality we have $s_\lambda \in \mg$.
\end{proof}

It is known that $\bGP_{\lambda} $ and $\bGQ_{\lambda} $ are both in $ \mG$ and hence Schur positive,
for any strict partition $\lambda$ \cite[Thms.~3.27 and 3.40]{MarScr2023}.
%
%
%
Combining \cite[Cor.~4.7]{MP2020} and \cite[Thm.~4.17]{MP2021} with the results in \cite{BKSTY}
gives an algorithm to compute
the $G^{(\beta)}_\mu$ terms appearing in $\bGP_{\lambda}$.
The only known algorithm to do the same for $\bGQ_\lambda$
is to expand the right side of \eqref{GQ-to-GP-eq}, which may involve cancellations.

Computations 
suggest some other instances of Schur positivity:


\begin{conjecture}
\label{bgq-mq-conj2}
If $\lambda$ is a strict partition then $\bjp_\lambda$ and  $\bjq_\lambda$ are Schur positive.
\end{conjecture}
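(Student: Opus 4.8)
The plan is to reduce Conjecture~\ref{bgq-mq-conj2} to a sign-alternation property of the Schur expansions of $\bgp_\lambda$ and $\bgq_\lambda$, and then to attack that property directly through the bar-tableau models. First I would invoke the antipode identities $\bjp_\lambda = (-1)^{|\lambda|}\antipode(\bgp_\lambda)$ and $\bjq_\lambda = (-1)^{|\lambda|}\antipode(\bgq_\lambda)$ together with the self-adjointness of the antipode for the dual Hopf algebras $\MSym$ and $\mSym$ and the formula $\antipode(s_\nu)=(-1)^{|\nu|}s_{\nu^\top}$ from \eqref{qsym-antipode-eq}. Writing $\langle\cdot,\cdot\rangle$ for the Schur-orthonormal form, the coefficient of $s_\nu$ in $\bjp_\lambda$ is $\langle s_\nu,\bjp_\lambda\rangle = (-1)^{|\lambda|}\langle \antipode(s_\nu),\bgp_\lambda\rangle = (-1)^{|\lambda|+|\nu|}\langle s_{\nu^\top},\bgp_\lambda\rangle$, that is, $(-1)^{|\lambda|+|\nu|}$ times the coefficient of $s_{\nu^\top}$ in $\bgp_\lambda$, and likewise for $\bjq_\lambda$ and $\bgq_\lambda$.

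Since $\bgp_\lambda$ is homogeneous of degree $|\lambda|$ when $\deg(\beta)=\deg(x_i)=1$, its $s_{\nu^\top}$-coefficient must be $c^\lambda_\nu\,\beta^{|\lambda|-|\nu|}$ for some $c^\lambda_\nu\in\ZZ$, so the $s_\nu$-coefficient of $\bjp_\lambda$ equals $(-1)^{|\lambda|-|\nu|}c^\lambda_\nu\,\beta^{|\lambda|-|\nu|}$. Thus the conjecture for $\bjp_\lambda$ is \emph{exactly} equivalent to the sign condition $(-1)^{|\lambda|-|\nu|}c^\lambda_\nu\ge 0$ for all $\nu$; in words, the Schur expansion of $\bgp_\lambda$ alternates in sign according to the codimension $|\lambda|-|\nu|$, and the case of $\bjq_\lambda$ reduces identically to codimension sign-alternation of the Schur expansion of $\bgq_\lambda$. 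A convenient sufficient condition would be that $\bgp_\lambda$ (respectively $\bgq_\lambda$) expand into the dual stable Grothendieck basis $\{g^{(\beta)}_\mu\}$ with codimension-alternating signs, since by homogeneity this would exhibit $\bjp_\lambda$ (respectively $\bjq_\lambda$) as an $\NN[\beta]$-linear combination of the functions $j^{(\beta)}_\mu$, each of which is Schur positive by Theorem~\ref{both-thm}. One could try to access the integers $c^\lambda_\nu$ by pairing $\bgp_\lambda$ against suitable kernels in the Cauchy identity \eqref{cauchy-eq}, but extracting the signs appears to be the crux.

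To establish the sign-alternation I would argue combinatorially. The functions $\bjp_\lambda$ and $\bjq_\lambda$ are manifestly $\NN[\beta]$-positive sums of monomials over the semistandard shifted bar tableaux in $\ShBTP(\lambda)$ and $\ShBTQ(\lambda)$, so the goal is to organize these tableaux into Schur classes by a weight-preserving map to semistandard Young tableaux, or dually to construct a shifted $K$-theoretic crystal on the bar tableaux whose highest-weight elements are indexed by partitions. This should mirror the unshifted mechanism behind Theorem~\ref{both-thm}, where Schur positivity of $j^{(\beta)}_\lambda$ follows from the positive Schur expansion of $g_\lambda=\omega(j_\lambda)$ in \cite{LamPyl}. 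The main obstacle is the shifted $K$-theoretic bookkeeping: one must control, compatibly with the crystal or insertion operators, both the primed/unprimed diagonal condition separating the $P$- and $Q$-cases and the partition $\Pi$ of $\SD_\lambda$ into bars that records the $\beta$-corrections. I expect the $Q$-case to be strictly harder, because the change of basis \eqref{gq-to-gp-eq} from $\bgq_\lambda$ to $\{\bgp_\mu\}$ carries signs $(-1)^{\columns(\lambda/\mu)}$ that are not dictated by codimension alone; the needed cancellations therefore cannot be deduced formally from the $P$-case and require an independent combinatorial model.
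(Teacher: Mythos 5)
You should first be aware that the statement you were asked to prove is Conjecture~\ref{bgq-mq-conj2}: the paper offers no proof of it, only the remark that computer calculations support it, so there is no argument of the authors' to compare yours against --- any complete proof would be a new result. Within that framing, your preliminary reductions are correct. Using $\bjp_\lambda=(-1)^{|\lambda|}\antipode(\bgp_\lambda)$, the $\antipode$-invariance of the form $\langle\cdot,\cdot\rangle$ on $\MSym\times\mSym$, the involutivity of the antipode of the commutative Hopf algebra $\MSym$, and $\antipode(s_\nu)=(-1)^{|\nu|}s_{\nu^\top}$ from \eqref{qsym-antipode-eq}, one does get that the $s_\nu$-coefficient of $\bjp_\lambda$ is $(-1)^{|\lambda|-|\nu|}$ times the $s_{\nu^\top}$-coefficient of $\bgp_\lambda$, and the homogeneity of $\bgp_\lambda$ when $\deg(\beta)=\deg(x_i)=1$ makes the conjecture equivalent to codimension sign-alternation of that Schur expansion. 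Your sufficient condition is also logically sound: a codimension-sign-alternating expansion of $\bgp_\lambda$ into $\{g^{(\beta)}_\mu\}$ would, after applying $(-1)^{|\lambda|}\antipode$, exhibit $\bjp_\lambda$ as an $\NN[\beta]$-combination of the $j^{(\beta)}_\mu$, which are Schur positive by Theorem~\ref{both-thm}.

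The genuine gap is that everything after this reformulation is a research program rather than an argument. You never construct the weight-preserving map from $\ShBTP(\lambda)$ or $\ShBTQ(\lambda)$ to semistandard Young tableaux, nor the proposed shifted $K$-theoretic crystal, nor any alternative mechanism forcing the signs $(-1)^{|\lambda|-|\nu|}c^\lambda_\nu\geq 0$; you say yourself that ``extracting the signs appears to be the crux'' and that the $Q$-case ``requires an independent combinatorial model.'' So the proposal stops exactly where the difficulty begins: the duality computation is a short formal exercise, and after it the conjecture is untouched. Note also that your sufficient condition trades one open positivity statement for another of comparable depth --- it sits in the same family as the paper's Conjectures~\ref{last1-conj} and \ref{last2-conj}, and even the Schur positivity of $\bgp_\lambda$ and $\bgq_\lambda$ themselves is not established in the paper (only $\bGP_\lambda,\bGQ_\lambda\in\mG$ is known, via \cite{MarScr2023}). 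Likewise, the appeal to ``the unshifted mechanism behind Theorem~\ref{both-thm}'' is not available off the shelf: that mechanism rests on Lenart's and Lam--Pylyavskyy's unshifted expansions, and no shifted analogue of those insertion or crystal results is known --- indeed producing one is precisely the open problem the paper records alongside this conjecture.
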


The more interesting open problem implicit in this conjecture
is the following:

\begin{problem}
Find combinatorial interpretations of the coefficients in the expansions
of $\bGP_{\lambda}$, $\bGQ_\lambda$,
$\bjp_\lambda$, and  $\bjq_\lambda$
into Schur functions
and
stable Grothendieck polynomials.
\end{problem}

The \defn{canonical (dual) stable Grothendieck functions} $G^{(\alpha,\beta)}_\lambda$
and $g^{(\alpha,\beta)}_\lambda$ 
are generalizations of stable Grothendieck polynomials introduced in \cite{Yeliussizov2017}.
In our notation, they satisfy $G^{(0,\beta)}_\lambda=G^{(\beta)}_\lambda$, $G^{(-\beta,0)}_\lambda=J^{(\beta)}_\lambda$,
$g^{(0,-\beta)}_\lambda=g^{(\beta)}_\lambda$, and $g^{(\beta,0)}_\lambda=j^{(\beta)}_\lambda$.
Both $G^{(\alpha,\beta)}_\lambda$
and $g^{(\alpha,\beta)}_\lambda$ are Schur positive   by \cite[Thm.~4.6]{HawScr} and \cite[Thm.~9.8]{Yeliussizov2017}.
This suggests another   open problem:

\begin{problem}
Describe the shifted analogues $\GP^{(\alpha,\beta)}_\lambda$ and
 $\GQ^{(\alpha,\beta)}_\lambda$
 (respectively, $\gp^{(\alpha,\beta)}_\lambda$ and $\gq^{(\alpha,\beta)}_\lambda$) of the power series $G^{(\alpha,\beta)}_\lambda$
(respectively, $g^{(\alpha,\beta)}_\lambda$) and  prove similar positivity results.
\end{problem}

Theorem~\ref{both-thm} implies that 
 $G^{(\beta)}_\lambda$ is an infinite $\NN[\beta]$-linear combination of $J^{(\beta)}_\mu$'s
 and that  $j^{(\beta)}_\lambda$ is a finite $\NN[\beta]$-linear combination of $g^{(\beta)}_\mu$'s.
 Patrias gives an explicit description of the coefficients in these expansions in \cite[Thm.~59]{Patrias}
 using the notation $G_\lambda :=  G^{(-1)}_\lambda $ and $\tilde j_\lambda := j^{(-1)}_\lambda $.
 
There appears to be a shifted analogue of this result.
 Here, we write $\mJP$ and $\mJQ$ for the respective sets of infinite $\NN[\beta]$-linear combinations of $\bJP$- and $\bJQ$-functions.

\begin{conjecture}\label{last1-conj}
If $\lambda$ is a strict partition then $\bGP_\lambda\in \mJP$
and $\bjq_\lambda \in \mgq$.
\end{conjecture}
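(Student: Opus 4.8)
The plan is to treat this as the shifted counterpart of the Patrias-type result recorded after Theorem~\ref{both-thm}, that $G^{(\beta)}_\lambda$ is an infinite $\NN[\beta]$-linear combination of $J^{(\beta)}_\mu$'s and $j^{(\beta)}_\lambda$ is a finite $\NN[\beta]$-linear combination of $g^{(\beta)}_\mu$'s. My first observation is that the two halves of the conjecture are not independent: the claim $\bjq_\lambda \in \mgq$ should follow formally from the claim $\bGP_\lambda \in \mJP$, so the real content lies in the latter. The reduction uses only the duality and antipode structure already established in the excerpt.

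Indeed, since $\{\bgq_\mu\}$ and $\{\bGP_\nu\}$ are dual (pseudo)bases for the Hopf duality $[\cdot,\cdot] : \MSymQ \times \mSymP \to \ZZ[\beta]$ of Theorem~\ref{gg-thm}, the coefficient of $\bgq_\mu$ in $\bjq_\lambda$ equals $[\bjq_\lambda, \bGP_\mu]$. Writing $\bGP_\mu = \sum_\rho d_{\mu\rho}\bJP_\rho$ for the $\bJP$-expansion whose positivity is asserted by $\bGP_\mu\in\mJP$, and using $\bjq_\lambda = (-1)^{|\lambda|}\antipode(\bgq_\lambda)$ from \cite{LM2022}, the adjointness of the antipodes of $\MSymQ$ and $\mSymP$ under $[\cdot,\cdot]$, and $\antipode(\bGP_\mu) = (-1)^{|\mu|}\bJP_\mu$ from Proposition~\ref{GP-GQ-S-prop}, I compute
\[
[\bjq_\lambda, \bGP_\mu]
= (-1)^{|\lambda|}[\antipode(\bgq_\lambda), \bGP_\mu]
= (-1)^{|\lambda|}[\bgq_\lambda, \antipode(\bGP_\mu)]
= (-1)^{|\lambda|+|\mu|}[\bgq_\lambda, \bJP_\mu]
= (-1)^{|\lambda|+|\mu|} e_{\mu\lambda},
\]
where $\bJP_\mu = \sum_\rho e_{\mu\rho}\bGP_\rho$. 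Because $\mSymP$ is commutative its antipode is an involution \cite[Cor.~1.4.12]{GrinbergReiner}, so applying $\antipode$ to $\bGP_\mu = \sum_\rho d_{\mu\rho}\bJP_\rho$ and using $\antipode(\bJP_\rho) = (-1)^{|\rho|}\bGP_\rho$ gives $e_{\mu\rho} = (-1)^{|\rho|-|\mu|}d_{\mu\rho}$. Substituting yields $[\bjq_\lambda, \bGP_\mu] = d_{\mu\lambda}$, so $\bjq_\lambda = \sum_\mu d_{\mu\lambda}\,\bgq_\mu$ is exactly the expansion predicted by $\bGP_\mu \in \mJP$; with $\deg\beta = -1$ the grading forces $d_{\mu\lambda}\neq 0$ to imply $|\mu|\le|\lambda|$, so the sum is finite. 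Hence $\bjq_\lambda \in \mgq$ whenever $\bGP_\mu \in \mJP$ for all $\mu$.

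It remains to prove $\bGP_\lambda \in \mJP$, and here I would imitate the unshifted derivation by routing through Schur $P$-functions rather than ordinary Schur functions, since the $\bJP_\nu$ lie in $\mSymP$ while a general $s_\mu$ does not. Concretely, I would establish two positivity statements: (a) that $\bGP_\lambda$ is $P$-positive, $\bGP_\lambda = \sum_\mu a_{\lambda\mu}P_\mu$ with each $a_{\lambda\mu}\in\NN[\beta]$, refining the known Schur positivity of $\bGP_\lambda$ from \cite[Thms.~3.27 and 3.40]{MarScr2023}; and (b) a shifted analogue of Lenart's theorem \cite[Thm.~2.8]{Lenart2000}, namely $P_\mu = \sum_\nu b_{\mu\nu}\bJP_\nu$ with $b_{\mu\nu}\in\NN[\beta]$. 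Composing gives $\bGP_\lambda = \sum_\nu \bigl(\sum_\mu a_{\lambda\mu}b_{\mu\nu}\bigr)\bJP_\nu$, where the grading makes each inner sum finite (only $|\lambda|\le|\mu|\le|\nu|$ contribute) with coefficients in $\NN[\beta]$, which is precisely $\bGP_\lambda\in\mJP$. The main obstacle is statement (b): a shifted Lenart-type expansion of $P_\mu$ into the $K$-theoretic basis $\{\bJP_\nu\}$, for which no analogue is known and whose unshifted incarnation already rests on a nontrivial combinatorial theorem; Patrias's explicit change-of-basis formulas \cite[Thm.~59]{Patrias} are obtained by Hopf-algebraic manipulations specific to $\Sym$ that do not transparently transfer. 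The $P$-positivity (a) is a second, likely milder, gap that one might hope to extract from the geometry in \cite[\S8.3]{IkedaNaruse} or the combinatorial models of \cite{ChiuMarberg,LM2022}, but I expect the shifted Lenart expansion — equivalently, a cancellation-free rule for the change of basis between shifted set-valued and multiset-valued $P$-tableaux — to be the crux that keeps the statement conjectural.
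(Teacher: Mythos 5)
First, a point of calibration: the statement you were asked to prove is Conjecture~\ref{last1-conj}, which the paper does not prove --- it is explicitly left open, and even the combinatorial interpretation of the putative coefficients is posed as a separate problem. So there is no proof in the paper to compare yours against, and your proposal is right not to claim one. What you do establish is correct and worth recording: your duality computation in fact shows the two halves of the conjecture are \emph{equivalent}, not merely that the first implies the second. Every ingredient is available in the paper: $\bjq_\lambda \in \MSymQ$ (the corollary following the antipode formulas for $\bgp_{\lambda/\mu}$ and $\bgq_{\lambda/\mu}$), the duality $[\bgq_\lambda,\bGP_\mu]=\delta_{\lambda\mu}$ from Proposition~\ref{sh-form-prop} and Theorem~\ref{gg-thm}, mutual adjointness of antipodes under a Hopf pairing (standard, and implicitly used in the paper's own computation of $[\bjp_\lambda,\bJQ_\mu]=\delta_{\lambda\mu}$), $\antipode(\bGP_\mu)=(-1)^{|\mu|}\bJP_\mu$ from Proposition~\ref{GP-GQ-S-prop}, involutivity of $\antipode$ on the commutative algebra $\mSymP$ \cite[Cor.~1.4.12]{GrinbergReiner}, and continuity of $\antipode$ on $\mSymP$ (it is the adjoint of the antipode of $\MSymQ$), which is what licenses applying it termwise to the infinite expansion $\bGP_\mu=\sum_\rho d_{\mu\rho}\bJP_\rho$. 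Your conclusion $[\bjq_\lambda,\bGP_\mu]=d_{\mu\lambda}$ says the $\bgq$-coefficients of $\bjq_\lambda$ \emph{are} the $\bJP$-coefficients of the $\bGP_\mu$'s, so the two positivity assertions are the same family of inequalities $d_{\mu\lambda}\in\NN[\beta]$; your grading argument ($\deg \beta = -1$ forces $|\mu|\leq|\lambda|$) correctly yields finiteness of the $\bgq$-expansion. The same computation explains why the paper pairs $\bGQ_\lambda\in\mJQ$ with $\bjp_\lambda\in\mgp$ in Conjecture~\ref{last2-conj}.

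The genuine gap is exactly where you locate it: nothing in the paper or its cited literature yields $\bGP_\lambda\in\mJP$, and your proposed factorization through Schur $P$-functions replaces one open statement with two. Step (a), Schur-$P$-positivity of $\bGP_\lambda$, is \emph{not} what \cite[Thms.~3.27 and 3.40]{MarScr2023} provides --- those results give positivity in stable Grothendieck polynomials and hence in Schur functions, and since $\{P_\mu\}$ spans only a proper subalgebra of $\MSym$, $P$-positivity is a strictly stronger claim that, as far as the paper's references go, is open. Step (b), a shifted analogue of Lenart's theorem \cite[Thm.~2.8]{Lenart2000} expanding $P_\mu$ as an $\NN[\beta]$-combination of $\bJP_\nu$'s, has no known analogue, and you are right that Patrias's change-of-basis identities \cite{Patrias} do not transport to the shifted setting in any transparent way. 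So the proposal should be read as a correct and nontrivial reduction of Conjecture~\ref{last1-conj} to its first half, plus a plausible but unproven program for that half --- which is consistent with the statement's status in the paper as a conjecture rather than a theorem.
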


\begin{conjecture}\label{last2-conj}
If $\lambda$ is a strict partition then $\bGQ_\lambda\in \mJQ$
and $\bjp_\lambda \in \mgp$.
\end{conjecture}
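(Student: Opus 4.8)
The plan is to collapse the two assertions into a single positivity statement by duality, and then to attack that statement either through the $\ttheta$-pairing of Theorem~\ref{ttheta-thm} or through an explicit combinatorial rule. First I would record the dual-basis relations $[\bgp_\lambda,\bGQ_\mu]=\delta_{\lambda\mu}$ from Proposition~\ref{sh-form-prop} and the relation $[\bjp_\lambda,\bJQ_\mu]=\delta_{\lambda\mu}$ established earlier. Writing $M_{\lambda\mu}:=[\bjp_\lambda,\bGQ_\mu]\in\ZZ[\beta]$, these relations identify the coefficient of $\bJQ_\mu$ in the $\{\bJQ\}$-expansion of $\bGQ_\lambda$ with $M_{\mu\lambda}$, and the coefficient of $\bgp_\mu$ in the $\{\bgp\}$-expansion of $\bjp_\lambda$ with $M_{\lambda\mu}$. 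Hence $\bGQ_\lambda\in\mJQ$ and $\bjp_\lambda\in\mgp$ are the \emph{same} assertion, namely that every entry of the matrix $(M_{\lambda\mu})$ lies in $\NN[\beta]$. The finiteness of the $\bgp$-expansion is automatic, since $\bjp_\lambda=(-1)^{|\lambda|}\antipode(\bgp_\lambda)$ lies in the free module $\MSymP$, whereas the $\bJQ$-expansion is a pseudobasis expansion and may be infinite, as permitted. By homogeneity in the two opposite $\beta$-gradings one has $M_{\lambda\mu}=\widehat m_{\lambda\mu}\,\beta^{|\lambda|-|\mu|}$ for a single integer $\widehat m_{\lambda\mu}$, so the entire content is the nonnegativity of the $\widehat m_{\lambda\mu}$.

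Next I would try to compute $M_{\lambda\mu}$. Using Theorem~\ref{ttheta-thm} and any preimage $g_\mu\in\mSym$ with $\ttheta(g_\mu)=\bGQ_\mu$, one gets $M_{\lambda\mu}=\langle\bjp_\lambda,g_\mu\rangle$; since Schur functions are self-dual for $\langle\cdot,\cdot\rangle$, this equals $\sum_\rho r_{\lambda\rho}\,c_{\mu\rho}$ where $\bjp_\lambda=\sum_\rho r_{\lambda\rho}s_\rho$ and $g_\mu=\sum_\rho c_{\mu\rho}s_\rho$. Thus $M_{\lambda\mu}\in\NN[\beta]$ would follow from (a) Schur positivity of $\bjp_\lambda$, which is exactly Conjecture~\ref{bgq-mq-conj2}, together with (b) the existence of a Schur-positive preimage $g_\mu$ of $\bGQ_\mu$ under $\ttheta$. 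As evidence that the mechanism is correct, the one-row case is already provable with the tools at hand: because the adjoint map $\MPeakP\to\MSymP$ of Theorem~\ref{last-adj-thm} is a morphism of Hopf algebras (being adjoint to an inclusion of LC-Hopf algebras) it commutes with $\antipode$, so the antipode formula $\antipode(\tpeakR_n)=(-1)^n\sum_{k}\binom{n}{k}\beta^{n-k}\tpeakR_k$ transports through $\tpeakR_k\mapsto\bgp_k$ to give $\bjp_{(n)}=\sum_{k=1}^n\binom{n}{k}\beta^{n-k}\bgp_k$, which is manifestly $\NN[\beta]$-positive and hence lies in $\mgp$.

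The hard part will be everything beyond a single row. The peak-algebra shortcut does not generalize, because $\bjp_\lambda$ is not a product of the one-row functions $\bgp_k$, so for multi-row $\lambda$ it need not lie in the image of $\MPeakP\to\MSymP$; and both ingredients (a) and (b) above are themselves unproven positivity statements of the same flavor as the other conjectures in this section. I therefore expect the genuine obstacle to be the construction of a positive combinatorial model: a cancellation-free, weight-preserving rule expanding the shifted bar-tableau generating function $\bjp_\lambda$ in the reverse-plane-partition basis $\{\bgp_\mu\}$, equivalently a shifted insertion or crystal linking semistandard shifted set-valued tableaux to shifted bar tableaux. No such rule is currently known in the shifted setting—this is precisely the gap flagged for the structure constants $\widehat a^\nu_{\lambda\mu},\widehat b^\nu_{\lambda\mu}$ and in the antipode discussion—so I would expect to settle only special shapes (single rows, and perhaps two-row or near-staircase cases accessible through \eqref{gq-to-gp-eq} and Proposition~\ref{abcd-prop}) rather than the full conjecture.
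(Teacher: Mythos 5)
This statement is Conjecture~\ref{last2-conj}: the paper offers no proof of it, and your proposal --- as you yourself say in the final paragraph --- does not prove it either, so the honest verdict is that there is a gap, namely the entire positivity assertion $\widehat m_{\lambda\mu}\geq 0$. That said, everything you do claim checks out. The duality bookkeeping is correct: $[\bjp_\lambda,\bJQ_\mu]=\delta_{\lambda\mu}$ holds by the corollary in Section~\ref{antipode-sect}, the antipode is a continuous bijection of $\mSymQ$, so $\{\bJQ_\mu\}$ is a pseudobasis and the coefficient extraction via $M_{\lambda\mu}:=[\bjp_\lambda,\bGQ_\mu]$ is legitimate; quantified over all $\lambda$ the two halves of the conjecture do collapse to positivity of the single matrix $(M_{\lambda\mu})$, and the homogeneity argument giving $M_{\lambda\mu}=\widehat m_{\lambda\mu}\beta^{|\lambda|-|\mu|}$ is right. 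Your reduction through Theorem~\ref{ttheta-thm} is also valid, including the implicit point that $\langle \bjp_\lambda, g_\mu\rangle$ is independent of the choice of preimage $g_\mu$ (since $[\bjp_\lambda,\ttheta(g)]=\langle\bjp_\lambda,g\rangle$ forces $\bjp_\lambda$ to be orthogonal to $\ker\ttheta\cap\mSym$). And the one-row case is a genuine proof: the adjoint of the inclusion $\mSymQ\hookrightarrow\mQSymQ$ is a Hopf algebra morphism, hence commutes with antipodes, so transporting the formula $\antipode(\tpeakR_n)=(-1)^n\sum_k\binom{n}{k}\beta^{n-k}\tpeakR_k$ through $\tpeakR_k\mapsto\bgp_k$ from Theorem~\ref{last-adj-thm} yields
\[
\bjp_{(n)}=\sum_{k=1}^n\tbinom{n}{k}\,\beta^{n-k}\,\bgp_{(k)}\in\mgp,
\]
a special case not recorded in the paper (one can sanity-check $\bjp_{(1)}=\bgp_{(1)}$ directly from the tableau definitions).

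The concrete gaps beyond that are two. First, your conditional route rests on two statements that are themselves open: ingredient (a) is literally Conjecture~\ref{bgq-mq-conj2}, and ingredient (b) --- a Schur-positive $\ttheta$-preimage of the straight-shape $\bGQ_\mu$ --- is not supplied by anything in the paper (the known formula $\ttheta(G^{(\beta)}_\lambda)=\bGQ_{(\lambda+\delta)/\delta}$ produces skew shapes, and surjectivity of $\ttheta$ gives existence of preimages only abstractly, with no positivity). Moreover (b) is strictly stronger than what is needed: positivity of the pairings $\langle\bjp_\lambda,g_\mu\rangle$ constrains $g_\mu$ only modulo $\ker\ttheta$, so the conjecture could hold while every natural preimage fails to be Schur positive; a proof strategy should not be anchored to (b) without a candidate. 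Second, as you correctly note, the peak-algebra transport does not extend past one row: expressing $\bgp_\lambda$ in products of one-row $\bgp_k$'s requires a signed (Jacobi--Trudi-type) straightening, and applying the antipode to such an expression reintroduces cancellations, destroying any positivity conclusion. So the proposal should be read as a sound reformulation plus a new one-row case, not as a proof; the full conjecture remains open, consistent with the paper's own framing.
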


As usual, beyond simply proving these conjectures, the following is of interest:

\begin{problem}
Find combinatorial interpretations of the coefficients appearing in the positive expansions suggested by Conjectures~\ref{last1-conj} and \ref{last2-conj}
\end{problem}

\printbibliography

\end{document}